\documentclass[11pt]{amsart}
\usepackage{graphicx,pstool}
\usepackage{amssymb,amsmath,amsfonts,amsthm,hyperref,here,enumerate,xfrac,xcolor,mathtools}
\linespread{1.1}
\pagestyle{plain}
 % Nummerierung der Gleichungen festlegen
\numberwithin{equation}{section}
\definecolor{MyDarkBlue}{rgb}{0,0.29,0.7}
\hypersetup{
    colorlinks=true,       % false: boxed links; true: colored links
    linkcolor=MyDarkBlue,          % color of internal links (change box color with linkbordercolor)
    linkbordercolor=MyDarkBlue,
    citecolor=MyDarkBlue,
    citebordercolor=MyDarkBlue,        % color of links to bibliography
    filecolor=MyDarkBlue,      % color of file links
    urlcolor=MyDarkBlue           % color of external links
}
\setcounter{tocdepth}{2}

\newcommand{\R}{\mathbb R}
\newcommand{\N}{\mathbb N}

% Seiteneinzug festlegen
\setlength{\parindent}{0cm}
\newtheoremstyle{plain}% name of the style to be used
  {10pt}% measure of space to leave above the theorem. E.g.: 3pt
  {10pt}% measure of space to leave below the theorem. E.g.: 3pt
  {\it}% name of font to use in the body of the theorem
  {0pt}% measure of space to indent
  {\bf}% name of head font
  {}% punctuation between head and body
  {\newline}% space after theorem head; " " = normal interword space
  {}
\newtheoremstyle{definition}% name of the style to be used
  {10pt}% measure of space to leave above the theorem. E.g.: 3pt
  {10pt}% measure of space to leave below the theorem. E.g.: 3pt
  {}% name of font to use in the body of the theorem
  {0pt}% measure of space to indent
  {\bf}% name of head font
  {}% punctuation between head and body
  {\newline}% space after theorem head; " " = normal interword space
  {}
  
\addtolength{\hoffset}{-1cm}
\addtolength{\textwidth}{2cm}
%\addtolength{\textheight}{0.5cm}
%\addtolength{\voffset}{-1.5cm}

\theoremstyle{plain}

\newtheorem{theorem}{Theorem}[section]
\newtheorem{coro}[theorem]{Corollary}
\newtheorem{lemma}[theorem]{Lemma}
\newtheorem{prop}[theorem]{Proposition}

\theoremstyle{definition}

\newtheorem{definition}[theorem]{Definition}
\newtheorem{example}[theorem]{Example}
\newtheorem{remark}[theorem]{Remark}

% Eigentilches Dokument beginnen

\begin{document}

\title{The One-Sided Isometric Extension Problem}

%\titlerunning{Short form of title}        % if too long for running head

\author{Norbert Hungerb\"uhler and Micha Wasem}
\address{Department of Mathematics, ETH Z\"urich, R\"amistrasse 101,
    8092 Z\"urich, Switzerland}
\email{norbert.hungerbuehler@math.ethz.ch}
\email{micha.wasem@math.ethz.ch}

%\authorrunning{Short form of author list} % if too long for running head

\date{\today\\
\emph{Mathematics Subject Classification:} 53B20, 53A07, 57R40, 35F60, 58B20\\
\emph{Keywords:} Convex Integration, Isometric Extension, $h$-Principle}
% The correct dates will be entered by the editor

\maketitle

\begin{abstract}
Let $\Sigma$ be a codimension one submanifold of an $n$-dimensional Riemannian manifold $M$, $n\geqslant 2$. We give a necessary condition for an isometric immersion of $\Sigma$ into $\R^{q}$ equipped with the standard Euclidean metric, $q\geqslant n+1$, to be locally isometrically $C^1$-extendable to $M$. Even if this condition is not met, ``one-sided'' isometric $C^1$-extensions may exist and turn out to satisfy a $C^0$-dense parametric $h$-principle in the sense of Gromov.\end{abstract}
\section{Introduction}
Let $(M,g)$ be an $n$-dimensional ($n\geqslant 2$) Riemannian manifold. Unless otherwise stated, all manifolds and metrics are assumed to be smooth. An isometric immersion is a map $u:M\to\R^q$ satisfying
\begin{equation}\label{pullbackequation}
g=u^*g_0,
\end{equation}
where $g_0=\langle \cdot,\cdot\rangle$ denotes the Euclidean metric. If $u$ is in addition a homeomorphism onto its image, we call $u$ an isometric embedding. Recall that $u$ is called \emph{short}, provided the equality in \eqref{pullbackequation} is replaced by $>$ in the sense of quadratic forms, i.e.\ if $g-u^*g_0$ is positive definite. Since $g$ is symmetric, the above system consists of $s_n=n(n+1)/2$ equations and $q$ unknowns. Usually, $s_n$ is referred to as the \emph{Janet-dimension}. For an analytic metric $g$, the Janet-Burstin-Cartan Theorem (see \cite{janet,burstin,cartan}) gives the local existence of an analytic solution to \eqref{pullbackequation} in the formally determined case $q=s_n$. In the smooth category, the local existence of a smooth solution follows from the works by Nash, Gromov, Rokhlin and Greene (see \cite{nash,gromov-rokhlin,greene}) provided $q=s_n+n$. In the formally overdetermined case $q=n+2$, Nash proved in \cite{nash2} a surprising local existence result for $C^1$-maps and showed that in this case every short immersion can be uniformly approximated by $C^1$-isometric immersions. His work was improved by Kuiper in \cite{kuiper} to the case $q=n+1$. The Nash-Kuiper theorem guarantees for example the existence of an isometric $C^1$-embedding of the flat torus into $\R^3$ of which a visualization appeared in \cite{borrelli2,borrelli}. A further refinement in the codimension one case has been obtained by Conti, de Lellis and Sz\'ekelyhidi in \cite{delellis}, where the authors prove the same statement for $C^{1,\alpha}$-isometric immersions provided $\alpha < \frac{1}{1+2s_n}$. The existence of weak solutions to \eqref{pullbackequation} if $q=n$ is treated in \cite{spadaro}.\\
\\
There is an accompanying extension problem related to \eqref{pullbackequation}: Let $\Sigma$ be a hypersurface in $(M,g)$ and let $f:\Sigma\to(\R^{q},g_0)$ be a smooth isometric immersion (embedding). When does $f$ admit an extension to an isometric immersion (embedding) $v:U\to\R^{q}$ satisfying
\begin{equation}\label{dissproblem}
\begin{aligned}v^*g_0 & = g\\
v|_\Sigma & = f,
\end{aligned}
\end{equation}
where $U\subset M$ is a neighborhood of a point in $\Sigma$?
This question was first considered by Jacobowitz in the case of high codimension and high regularity in \cite{jacobowitz}. Jacobowitz derived a necessary condition on the second fundamental forms of $\Sigma$ in $M$ and $f(\Sigma)$ in $\R^q$ respectively for isometric $C^2$-extensions to exist and showed that this condition is ``almost'' sufficient to prove local existence in the analytic and smooth categories requiring the same conditions on the dimension ($q=s_n$ and $q=s_n+n$) as in the respective local existence Theorems above.\\
\\
In the present work we will focus on the low regularity and low codimension case. Using a length comparison argument we will show that Jacobowitz' obstruction to local isometric $C^2$-extensions is also an obstruction to local isometric $C^1$-extendability. However, restricting the neighborhood $U$ in \eqref{dissproblem} to one side of $\Sigma$ only, we will prove the existence of \emph{one-sided isometric $C^1$-extensions} under very mild hypotheses on $\Sigma$ and $f$, providing an analogue of the Nash-Kuiper Theorem for isometric extensions. It turns out that the so obtained isometric $C^1$-extensions satisfy a $C^0$-dense parametric $h$-principle in the sense of Gromov.
\subsection{Main Results}
In local coordinates the problem \eqref{dissproblem} can be reformulated as follows: Equip an open ball in $\R^n$ centered at zero with an appropriate metric $g$ and let the isometric immersion $f:B\to\R^{n+1}$ be prescribed on $B$ which is the intersection of the closure of the ball with $\R^{n-1}\times \{0\}$. The intersections of the ball with $\R^{n-1}\times\R_{\geqslant 0}$ and $\R^{n-1}\times\R_{\leqslant 0}$ are then called \emph{one-sided neighborhoods} of $B$. The image of a one-sided neighborhood of $B$ under the inverse of a local chart will be called also a one-sided neighborhood (of a point in $\Sigma$). In order to state our main results, we need the following

\begin{figure}[H]
\begin{center}
\psfragfig[scale=1.1]{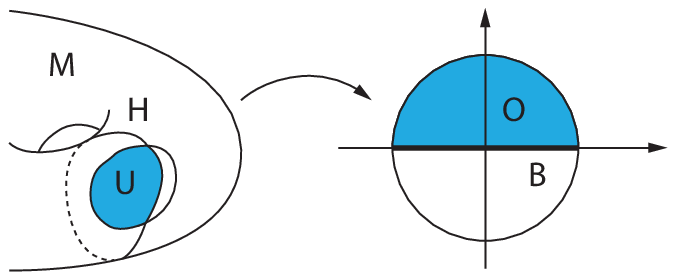}
\end{center}
\end{figure}
\begin{definition}\label{subsolution}
Let $\Omega$ be a one-sided neighborhood of $B$. A $C^\infty$-immersion $u:\bar\Omega\to\R^{n+1}$ is called a \emph{short map adapted to $(f,g)$} whenever $u|_B=f$ and $g-u^*g_0\geqslant 0$ in the sense of quadratic forms with equality on $B$ only. By taking the inverse of a local chart again, we get a notion of an adapted short map on the manifold level.
\end{definition}
We are now ready to state our main results:
\begin{theorem}[$C^1$-Extensions]\label{mainresultc^1}
Let $u:\bar \Omega\to\R^{n+1}$ be a short map adapted to $(f,g)$. Then for every $\varepsilon>0$, there exists a $C^1$-immersion $v:\bar \Omega\to\R^{n+1}$ satisfying $v^*g_0  = g$, $v|_B  = f$ and $\|u-v\|_{C^0(\bar \Omega)}  <\varepsilon$. Moreover, the maps $u$ and $v$ are homotopic within the space of short maps adapted to $(f,g)$, and if $u$ is an embedding we can choose $v$ to be an embedding as well.
\end{theorem}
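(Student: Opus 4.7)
The plan is to adapt the Nash--Kuiper convex integration scheme, using iterated Kuiper corrugations, to the present boundary-value setting so that the boundary condition $v|_B = f$ is preserved exactly throughout the construction, while the metric defect is driven to zero in $C^0$.

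I would first decompose the defect $h := g - u^*g_0 \geqslant 0$ as a finite sum of primitive rank-one forms $h = \sum_{k=1}^N a_k^2\,\nu_k \otimes \nu_k$, combining the standard linear-algebra decomposition of a positive semidefinite symmetric form with a partition of unity subordinate to a cover of $\bar\Omega$ by coordinate patches. Since $h$ vanishes precisely on $B$, the smooth coefficients $a_k$ can be arranged to vanish on $B$ as well; this is the property that will let the boundary values survive the iteration.

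One \emph{step} of the construction corrects a single primitive defect $a^2\,\nu\otimes\nu$: given a current adapted short map $u_j:\bar\Omega\to\R^{n+1}$, I choose a phase function $\xi$ with $d\xi \propto \nu$ and a smooth unit normal field $\zeta$ to the immersion $u_j$, and set
\[
u_{j+1} = u_j + \tfrac{1}{\lambda}\bigl(\Gamma \cos(\lambda\xi) + \Delta \sin(\lambda\xi)\bigr),
\]
where $\Gamma,\Delta$ lie in the plane spanned by a suitable tangent vector (e.g.\ $\partial_\nu u_j$) and $\zeta$, with amplitudes proportional to $a$. The classical Kuiper codimension-one computation shows that the contribution of the corrugation to the pullback metric cancels $a^2\,\nu\otimes\nu$ up to an $O(1/\lambda)$ error which, for $\lambda$ sufficiently large, can be absorbed into the remaining positive semidefinite defect, so that $u_{j+1}$ is again short and adapted. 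Because $a|_B = 0$, the correction vanishes on $B$ and hence $u_{j+1}|_B = f$.

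A \emph{stage} applies one step per term of the decomposition, reducing the total defect by a fixed fraction; iterating stages with a standard cascade of frequencies $\lambda_j \to \infty$, chosen so that $\|u_j\|_{C^1}$ remains bounded and the increments $\|u_{j+1}-u_j\|_{C^0}$ are summable, produces a sequence of adapted short maps converging in $C^1$ to a map $v$ with $v^*g_0=g$, $v|_B=f$ and $\|u-v\|_{C^0}<\varepsilon$. The homotopy is obtained by concatenating the natural one-parameter families $t \mapsto u_j + \tfrac{t}{\lambda_j}\bigl(\Gamma\cos(\lambda_j\xi)+\Delta\sin(\lambda_j\xi)\bigr)$ and reparametrising so that the whole path is continuous up to $v$; each such family lies in the space of short maps adapted to $(f,g)$ by the same computation used for a single step. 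If $u$ is an embedding, compactness of $\bar\Omega$ yields a quantitative injectivity radius, and by tightening the tolerances one can ensure that each $u_j$, and hence $v$, is also an embedding.

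The main difficulty I expect is the joint bookkeeping of the boundary condition and the Nash--Kuiper estimates. The standard estimates exploit a strict positive lower bound on the amplitudes on the support of the correction, whereas here the $a_k$ must vanish on $B$. To close the loop I will separate the correction into an interior part, where the classical estimates apply verbatim, and a boundary layer near $B$, where $a_k$ are small but the remaining defect is already correspondingly small by continuity, so that the required correction in the layer scales consistently and the estimates still close up.
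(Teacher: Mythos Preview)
Your overall outline matches the paper's Nash--Kuiper iteration (step/stage/iteration, $C^1$-convergence, homotopy, embedding), but there is a genuine gap precisely at the point you flag as the main difficulty, and your proposed fix does not work. You argue that since $a_k|_B=0$ the correction vanishes on $B$, so $u_{j+1}|_B=f$, and that near $B$ ``the required correction in the layer scales consistently'' with the small defect. The first claim is true for the \emph{values}, but not for the \emph{metric}: the error term $r$ in $\nabla v^T\nabla v=\nabla u^T\nabla u+a^2\nu\otimes\nu+r$ contains contributions of the form $\tfrac{1}{\lambda}\,\partial_s\Gamma\cdot\nabla a$ coming from $\nabla_x\bigl(\Gamma(a(x)|\tilde\xi|,\lambda\langle x,\nu\rangle)\bigr)$. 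Even when $a|_B=0$, the gradient $\nabla a$ does not vanish on $B$, so $r|_B$ is genuinely $O(\lambda^{-1})$ and nonzero; hence $g-v^*g_0$ need not be $\geqslant 0$ on (or near) $B$, and $v$ is no longer an adapted short map. Your boundary-layer heuristic fails because the error does \emph{not} scale with $a_k$ there. A second, related issue is that the decomposition only gives smoothness of $a_k^2$, not of $a_k$; since the defect vanishes to first order on $B$ one typically has $a_k\sim\sqrt{x_n}$, so plugging $a_k$ directly into the corrugation does not even produce a smooth map.

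The paper resolves both issues simultaneously by multiplying the amplitude by a cut-off $\eta_\ell$ that vanishes on a collar $\bar\Omega_{\ell/2}$ of $B$: the corrugation then leaves $u$ untouched near $B$ (so smoothness and the exact equality $g=u^*g_0$ on $B$ survive trivially), while on $\overline{\Omega_{\ell/2}^c}$ the defect is bounded below by some $\delta>0$ and the uniform error $O(\lambda^{-1})$ can be made $\leqslant\tfrac{\delta^2}{2m}$, keeping the map strictly short. The parameters $\ell,\delta$ are re-chosen at each stage so that the untouched collar shrinks and the final $C^1$-limit is isometric on all of $\bar\Omega$. Your homotopy also needs adjustment: scaling the correction linearly in $t$ does not preserve the circle identity underlying the corrugation, so $g-u_t^*g_0\geqslant 0$ is not guaranteed at intermediate $t$; the paper instead scales the \emph{argument} $s\mapsto\eta(\tau)s$ of the corrugation function, which adds exactly $\eta(\tau)^2(1-\delta)\eta_\ell^2 a_k^2\,\nu_k\otimes\nu_k$ plus a controlled remainder and hence stays inside the space of adapted short maps.
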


As a Corollary of Theorem \ref{mainresultc^1} we obtain the following statement about isometric extensions of the standard inclusion $\iota: S^1\hookrightarrow\R^2\times\{0\}\subset \R^3$ to an isometric immersion $S^2\to\R^3$ (here $S^1$ is the equator of $S^2$).
\begin{coro}[Flexible Extensions on $S^2$]\label{flexibleextensions}
There are infinitely many isometric $C^1$-embeddings $v:S^2\to\R^3$ satisfying $v|_{S^1}=\iota$.
\end{coro}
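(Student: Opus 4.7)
The plan is to decompose $S^2=H^+\cup H^-$ into its two closed hemispheres $H^{\pm}=\{(x,y,z)\in S^2:\pm z\geqslant 0\}$, which meet along the equator $S^1$. Each hemisphere is a one-sided neighborhood of $S^1$, so Theorem \ref{mainresultc^1} applies on each side once we produce adapted short embeddings there. The strategy is to exhibit a single smooth short embedding $U:S^2\to\R^3$ whose restrictions to $H^\pm$ are both adapted to $(\iota,g_{S^2})$, apply Theorem \ref{mainresultc^1} on each side, and glue along $S^1$.

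A convenient candidate is the one-parameter family
\[
U_c(x,y,z)=(1-cz^2)(x,y,z),\qquad c\in\left(0,\tfrac{1}{2}\right).
\]
Clearly $U_c|_{S^1}=\iota$; it is an embedding because $\lambda(p):=1-cz(p)^2>0$ on $S^2$, so that $\lambda_p\,p=\lambda_q\,q$ together with $|p|=|q|=1$ forces $\lambda_p=\lambda_q$ and hence $p=q$. For a tangent vector $\xi=(u,v,w)\in T_{(x,y,z)}S^2$ (so $xu+yv+zw=0$), a direct computation gives
\[
g_{S^2}(\xi,\xi)-|dU_c(\xi)|^2=cz^2(2-cz^2)|\xi|^2-4c^2z^2w^2,
\]
and the tangency relation $|\xi|^2\geqslant w^2/(1-z^2)$ reduces positivity on $S^2\setminus S^1$ to the inequality $2+3cz^2>4c$, which holds uniformly on $[-1,1]$ precisely when $c<1/2$. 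Consequently $g_{S^2}-U_c^*g_0$ vanishes exactly on $S^1$ and is positive definite elsewhere, so each $U_c|_{H^\pm}$ is a short embedding adapted to $(\iota,g_{S^2})$ in the sense of Definition \ref{subsolution}.

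Fix such a $c$ and $\varepsilon>0$. Theorem \ref{mainresultc^1} applied to $U_c|_{H^+}$ furnishes a $C^1$-isometric embedding $v^+:H^+\to\R^3$ with $v^+|_{S^1}=\iota$ and $\|v^+-U_c\|_{C^0}<\varepsilon$. The reflection $\sigma(x,y,z)=(x,y,-z)$ preserves $g_{S^2}$ and $U_c$ and fixes $S^1$ pointwise; hence $v^-:=\sigma\circ v^+\circ\sigma$ is a $C^1$-isometric embedding on $H^-$ also extending $\iota$ and $\varepsilon$-close to $U_c|_{H^-}$. Gluing yields a continuous isometric embedding $v:S^2\to\R^3$ satisfying $v|_{S^1}=\iota$.

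The principal obstacle is $C^1$-regularity of $v$ across $S^1$. Using $v^-=\sigma\circ v^+\circ\sigma$, one computes that the inward normal derivative from the lower side at $p\in S^1$ equals $-\sigma\,\partial_nv^+|_p$, so the tangent planes of $v(H^\pm)$ at $\iota(p)$ agree precisely when $\partial_nv^+|_p$ is vertical. Since $\partial_nU_c|_{S^1}=\hat z$ pointwise, compatibility will follow provided the Nash--Kuiper iteration underlying Theorem \ref{mainresultc^1} is arranged so that the successive corrections vanish to first order on $B$, preserving $\partial_nv^+|_{S^1}=\partial_nU_c|_{S^1}$; this is the main technical point and also where the parametric $h$-principle asserted in the abstract plays a role. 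To obtain infinitely many distinct extensions, one can either invoke the $C^0$-dense parametric $h$-principle implicit in Theorem \ref{mainresultc^1}, or select a sequence $\{c_i\}\subset(0,1/2)$ with mutual $C^0$-distance $\|U_{c_i}-U_{c_j}\|_{C^0}>2\varepsilon$; the corresponding $v_i$ are then $\varepsilon$-close to distinct $U_{c_i}$ and hence pairwise distinct.
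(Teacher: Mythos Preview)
Your adapted short map $U_c$ is essentially the paper's $\Phi_\varepsilon$ written in Cartesian coordinates, and your verification that $g_{S^2}-U_c^*g_0$ is positive semidefinite with kernel exactly along the equator is correct. The divergence from the paper, and the genuine gap, is in how you obtain $C^1$-regularity of the glued map across $S^1$.

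Theorem \ref{mainresultc^1} as stated guarantees only $v^+|_{S^1}=\iota$, not $\mathrm d v^+|_{S^1}=\mathrm d U_c|_{S^1}$. You correctly identify that your reflection trick needs $\partial_n v^+|_{S^1}$ to be vertical, but you do not prove it; you assert that the iteration ``can be arranged'' so that corrections vanish to first order on $B$ and then appeal to the parametric $h$-principle. The parametric $h$-principle concerns homotopies of solutions and has nothing to do with preservation of first-order boundary data, so this appeal is misplaced, and as written the argument is incomplete. The missing observation is already contained in the construction behind Theorem \ref{mainresultc^1}: each step (Proposition \ref{step}) and hence each stage (Proposition \ref{stage}) leaves the map \emph{identically} equal to the previous one on a full neighborhood $\bar\Omega_{\ell/2}$ of $B$, so $\mathrm d u_k|_{B}=\mathrm d u_0|_{B}$ for every $k$; since the convergence $u_k\to v^+$ is in $C^1(\bar\Omega)$, this passes to the limit and gives $\mathrm d v^+|_{S^1}=\mathrm d U_c|_{S^1}$, hence in particular the vertical normal derivative you need.

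The paper organises this more cleanly by avoiding the reflection altogether: it runs the (global) iteration independently on $H^+$ and $H^-$ starting from the same globally smooth $U_c$. Because an open neighborhood of $S^1$ remains literally untouched after finitely many stages, the two maps $u_k^{\pm}$ glue to a single \emph{smooth} embedding $u_k:S^2\to\R^3$ for each $k$, and the $C^1$-convergence on each closed hemisphere then yields a $C^1$-limit on all of $S^2$. This also takes care of global injectivity (your glued $v$ is only asserted to be an embedding, without checking that $v^+(H^+)$ and $\sigma(v^+(H^+))$ meet only along $\iota(S^1)$). Your argument for producing infinitely many distinct extensions via separated $c_i$ is fine.
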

Observe that this Corollary is in sharp contrast to the following uniqueness Theorem (see \cite{borisov1,borisov2,borisov4,borisov3,borisov5,delellis}):

\begin{theorem}[Borisov]\label{uniqueness}
If $\alpha > \frac{2}{3}$, the standard inclusion $S^2\hookrightarrow \R^3$ is the only isometric $C^{1,\alpha}$-extension of $\iota$ to $S^2$ up to reflection across the plane containing $\iota(S^1)$.\end{theorem}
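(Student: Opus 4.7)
The plan is to reduce the statement to the known Borisov rigidity theorem for $C^{1,\alpha}$ isometric immersions of positively curved surfaces (this is precisely what the citations \cite{borisov1,borisov2,borisov4,borisov3,borisov5,delellis} establish), and then to eliminate the ambiguity of a rigid motion by using the boundary constraint $v|_{S^1}=\iota$. Since the round metric on $S^2$ has Gauss curvature identically $+1$, any isometric $C^{1,\alpha}$-extension $v$ is an isometric immersion of a positively curved closed surface, so the deep part of the argument is imported rather than reproved.

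First, I would apply the cited rigidity theorem in the following form: for $\alpha>2/3$, the Gauss equation for a $C^{1,\alpha}$ isometric immersion of a $2$-dimensional Riemannian manifold into $\R^3$ holds in the classical (continuous) sense, so the Brouwer degree of the Gauss map is well defined and the image $v(S^2)$ is a closed convex surface. Combined with the classical Cohn-Vossen/Herglotz rigidity of closed convex surfaces (which applies because the conclusion of Borisov's theorem places us back in a setting where $C^2$-type arguments are available at the level of the image), this yields that $v$ coincides with the standard inclusion $\iota_{S^2}:S^2\hookrightarrow\R^3$ up to a global rigid motion $\Phi\in O(3)\ltimes\R^3$, i.e. $v=\Phi\circ\iota_{S^2}$.

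Next I would extract the rigid motion using the boundary data. The condition $v|_{S^1}=\iota$ forces $\Phi$ to fix the standard unit circle $\iota(S^1)\subset\R^2\times\{0\}$ pointwise. Since $\iota(S^1)$ affinely spans the plane $\R^2\times\{0\}$, the affine map $\Phi$ must fix this entire plane pointwise, and the only elements of $O(3)\ltimes\R^3$ with this property are the identity and the reflection $R(x,y,z)=(x,y,-z)$ across the plane of $\iota(S^1)$. Consequently $v\in\{\iota_{S^2},\,R\circ\iota_{S^2}\}$, which is exactly the claim.

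The main obstacle is clearly not the argument above, which is essentially a bookkeeping step once rigidity is granted; it is the Borisov rigidity theorem itself, whose proof (as made rigorous in \cite{delellis} via commutator estimates \emph{à la} Constantin-E-Titi) is a substantial piece of analysis. In the present note, however, that theorem is treated as a cited black box, so the proof proposal consists only of verifying that the hypotheses of the rigidity theorem apply to $v$ and that the group of rigid motions preserving the equator reduces to $\{\mathrm{id},R\}$.
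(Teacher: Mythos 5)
The paper does not actually prove this theorem: it is quoted as a known result of Borisov, with the proof delegated entirely to the cited literature (\cite{borisov1,borisov2,borisov4,borisov3,borisov5,delellis}); the accompanying remark only indicates that the argument rests on the conservation of a weak form of Gaussian curvature for $C^{1,\alpha}$-immersions with $\alpha>2/3$. So there is no internal proof to compare against, and your decision to treat the Borisov/Conti--De Lellis--Sz\'ekelyhidi rigidity as a black box and only supply the reduction is entirely in the spirit of the paper. Your bookkeeping step is also correct: once one knows $v=\Phi\circ\iota_{S^2}$ for a rigid motion $\Phi$, the condition $v|_{S^1}=\iota$ forces $\Phi$ to fix the unit circle pointwise, hence the plane $\R^2\times\{0\}$ pointwise, and the only such isometries of $\R^3$ are the identity and the reflection across that plane. (One small intermediate step worth making explicit: rigidity of the image gives that $v(S^2)$ is a unit round sphere, and then the fact that $v$ itself is $\Phi\circ\iota_{S^2}$ uses that a local isometry of the simply connected round sphere is the restriction of an element of $O(3)$.)

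The one genuinely shaky point is your appeal to Cohn--Vossen/Herglotz rigidity ``because $C^2$-type arguments are available at the level of the image.'' They are not: the conclusion of the weak-curvature argument is that $v(S^2)$ is the boundary of a convex body, which confers no $C^2$ regularity, and both Herglotz's and Cohn--Vossen's proofs require at least $C^2$ (respectively analytic/$C^3$) data. The correct tool at this stage is Pogorelov's rigidity theorem for general closed convex surfaces, which needs no smoothness beyond convexity; alternatively, the rigidity statement for the round sphere in the $C^{1,\alpha}$, $\alpha>2/3$ setting is already derived in the cited work of Conti, De Lellis and Sz\'ekelyhidi and can be invoked directly. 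With that substitution your argument is a faithful and complete reduction of the stated theorem to the cited results.
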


\begin{remark} The threshold $\alpha$ between uniqueness and abundance of solutions is still an open problem (see for instance \cite{laszlo}, \cite{delellis} and \cite[p. 8, Problem 27]{yau}). The proof of Theorem \ref{uniqueness} relies on the conservation of a weak form of Gaussian curvature for $C^{1,\alpha}$-immersions whenever $\alpha > 2/3$ (see \cite{delellis} and the discussion in \cite[p. 369]{fluidequations}).
\end{remark}
\subsection{Organization of the Paper}
In section 2, we will present the obstruction to isometric $C^1$-extendability (Proposition \ref{c1obstruction}) and the construction of adapted short maps (Proposition \ref{constructionofsubsolutions}). The construction of \emph{one-sided isometric $C^1$-extensions} is based on an iteration scheme called \emph{convex integration} which is a generalization by Gromov \cite{gromov} of Nash's original method used in \cite{nash2}. The strategy consists in writing the metric defect of an adapted short map as a sum of \emph{primitive metrics} and add successively error terms. This uses a \emph{Corrugation} that is presented in section 3. In section 4 we give the construction of one-sided isometric $C^1$-extensions. These extensions satisfy a $C^0$-dense parametric $h$-principle. This is the content of section 5. In section 6, we show how one can get an isometric extension that is an embedding. Section 7 indicates how to obtain global results from the local ones by a partition of unity argument. We will formulate all our results for the codimension one case $q=n+1$, but they can be carried on to higher codimension as well (see Remark \ref{codimensionremark}).

\section{Obstructions and Adapted Short Maps}
\subsection{Obstructions}
In this section, we will show that Jacobowitz' necessary condition for the existence of isometric $C^2$-extensions is in fact an obstruction to isometric $C^1$-extensions. Recall that $\Sigma$ is a hypersurface of an $n$-dimensional Riemannian manifold $(M,g)$ and $f:\Sigma\to\R^{n+1}$ is an isometric immersion we seek to extend to a neighborhood $U$ of a point in $\Sigma$.

Let $A\in\Gamma(\mathrm S^2(T^*\Sigma)\otimes N\Sigma)$ denote the second fundamental form of $\Sigma$ in $M$, and$$h(X,Y)\coloneqq g(\nu,A(X,Y)),$$where $\nu\in\Gamma(N\Sigma)$ is a unit vector field. Let further $\bar A\in\Gamma(\mathrm S^2(T^*\Sigma)\otimes f^*N\bar \Sigma)$ be the second fundamental form of $\bar \Sigma\coloneqq f(\Sigma)$ in $\R^q$. In \cite{jacobowitz}, Jacobowitz shows that if $u\in C^2(U,\R^q)$ solves \eqref{dissproblem}, then there exists a unit vector field $\bar \nu\in\Gamma(f^*N\bar \Sigma)$ such that $h(X,Y)=\langle \bar\nu, \bar A(X,Y)\rangle$ for all vector fields $X,Y\in\Gamma(T(\Sigma\cap U))$. In particular, $|h(X,Y)|_g\leqslant |\bar A (X,Y)|$. Hence we get as a corollary:
\begin{coro}[$C^2$-Obstruction]\label{c2obstruction}
If there exists a unit vector $v\in T_p\Sigma$ such that $|h(v,v)|_g>|\bar A(v,v)|$, no isometric extension $u\in C^2(U,\R^q)$ can exist.
\end{coro}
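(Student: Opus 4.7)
The plan is to prove the contrapositive: assuming an isometric $C^2$-extension $u \in C^2(U,\R^q)$ of $f$ exists, I would derive a pointwise upper bound at $p$ that contradicts the hypothesis $|h(v,v)|_g > |\bar A(v,v)|$. All the structural work has already been done by Jacobowitz and is summarised in the paragraph immediately preceding the corollary, so the argument collapses to a single application of the Cauchy--Schwarz inequality inside a normal fibre of $\bar\Sigma$ in $\R^q$.

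Concretely, I would invoke the existence of a unit section $\bar\nu \in \Gamma(f^*N\bar\Sigma)$ such that
\[
h(X,Y) = \langle \bar\nu, \bar A(X,Y)\rangle \quad \text{for all } X,Y \in \Gamma(T(\Sigma \cap U)),
\]
evaluate this scalar identity at $p$ on the pair $(v,v)$, and then estimate
\[
|h(v,v)|_g \;=\; |\langle \bar\nu_p,\bar A(v,v)\rangle| \;\leqslant\; |\bar\nu_p|\cdot|\bar A(v,v)| \;=\; |\bar A(v,v)|,
\]
using that $\bar\nu_p$ has unit Euclidean length in $f^*_p N\bar\Sigma \subset \R^q$. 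This flatly contradicts the assumption on $v$, so no such $C^2$-extension can exist.

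I do not foresee any genuine obstacle: the corollary is essentially just a repackaging of Jacobowitz' identity together with Cauchy--Schwarz. The only notational point worth underlining is that $h(v,v)$ is a scalar, because $\nu$ is a fixed unit section of the rank-one bundle $N\Sigma$; accordingly, $|h(v,v)|_g$ is literally the absolute value of a real number, with the subscript $g$ recording only that $v$ has been $g$-normalised, whereas $|\bar A(v,v)|$ denotes the Euclidean length of a vector in $\R^q$. With these identifications the two sides of the comparison live on the same footing and the contradiction is immediate.
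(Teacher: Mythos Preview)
Your proposal is correct and matches the paper's own reasoning exactly: the corollary is stated there as an immediate consequence of Jacobowitz' identity $h(X,Y)=\langle\bar\nu,\bar A(X,Y)\rangle$ together with the Cauchy--Schwarz inequality, precisely as you wrote. The paper does not even supply a separate proof environment for it, so there is nothing further to compare.
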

In order to prove the obstruction to isometric $C^1$-extendability, we need the following lemma that seems to appear for the first time in \cite[Theorem 5]{haantjes} but we will give a more modern proof:
\begin{lemma}\label{geodesicdistance}
Let $\gamma:[0,\varepsilon)\to(M,g)$ be a unit speed $C^4$-curve with $\gamma(0)=p$ and let $k_g$ denote the geodesic curvature of $\gamma$ at $p$. Then the geodesic distance from $p$ to $\gamma(t)$ satisfies
\begin{equation*}d(p,\gamma(t))=t-\frac{k_g^2}{24}t^3+O(t^4) \text{ for }t\to 0.\end{equation*}
\end{lemma}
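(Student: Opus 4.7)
\medskip

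\textbf{Proof plan for Lemma \ref{geodesicdistance}.}

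My plan is to work in Riemannian normal coordinates centered at $p$. The decisive feature of these coordinates is that radial lines through the origin are geodesics and the geodesic distance from $p$ coincides with the Euclidean norm of the coordinate vector; thus, if $x(t)\in\R^n$ denotes the normal coordinates of $\gamma(t)$, then $d(p,\gamma(t))=|x(t)|$, and the problem reduces to a Taylor expansion. Since $\gamma$ is of class $C^4$, I can write
\begin{equation*}
x(t)=t v+\tfrac{t^2}{2}a+\tfrac{t^3}{6}b+O(t^4),\qquad v=\dot x(0),\ a=\ddot x(0),\ b=\dddot x(0),
\end{equation*}
and then
\begin{equation*}
|x(t)|^2 = t^2|v|^2+t^3\langle v,a\rangle+t^4\!\left(\tfrac{|a|^2}{4}+\tfrac{\langle v,b\rangle}{3}\right)+O(t^5),
\end{equation*}
so that after taking the square root,
\begin{equation*}
d(p,\gamma(t))=|x(t)|=|v|\,t+\tfrac{\langle v,a\rangle}{2|v|}\,t^2+t^3\!\left(\tfrac{|a|^2}{8|v|}+\tfrac{\langle v,b\rangle}{6|v|}-\tfrac{\langle v,a\rangle^2}{8|v|^3}\right)+O(t^4).
\end{equation*}

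The remaining task is to express the coefficients of $t, t^2, t^3$ intrinsically. I would recall that in normal coordinates $g_{ij}(0)=\delta_{ij}$, $\partial_k g_{ij}(0)=0$, so that the Christoffel symbols vanish at the origin and consequently $\nabla_{\dot\gamma}\dot\gamma$ at $p$ coincides with $\ddot x(0)=a$; in particular $|a|=k_g$ and the Euclidean inner product at $p$ coincides with $g$. I then differentiate the unit–speed identity $g_{ij}(x(t))\dot x^i\dot x^j\equiv 1$ up to order two and evaluate at $t=0$: the zeroth order gives $|v|=1$; the first order, using $\partial g(0)=0$, gives $\langle v,a\rangle=0$; the second order gives
\begin{equation*}
2\langle v,b\rangle+2|a|^2+v^m v^k v^i v^j\,\partial_m\partial_k g_{ij}(0)=0.
\end{equation*}
The quartic contraction of the second derivative of the metric vanishes because $\partial_m\partial_k g_{ij}(0)$ is a sum of curvature components $R_{i(mk)j}$ whose antisymmetries kill any totally symmetric contraction with $v^m v^k v^i v^j$. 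Hence $\langle v,b\rangle=-|a|^2$.

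Substituting $|v|=1$, $\langle v,a\rangle=0$ and $\langle v,b\rangle=-k_g^2$ into the earlier expansion, the $t^3$ coefficient collapses to $\tfrac{1}{8}k_g^2-\tfrac{1}{6}k_g^2=-\tfrac{1}{24}k_g^2$, yielding the claimed formula. The only subtle step is the cancellation of the curvature term at order $t^2$ in the unit–speed identity; everything else is bookkeeping. The $C^4$ hypothesis on $\gamma$ is precisely what is needed to justify the fourth–order Taylor expansion with remainder $O(t^4)$.
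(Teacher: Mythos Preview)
Your argument is correct and follows essentially the same route as the paper: pass to normal coordinates at $p$, identify $d(p,\gamma(t))$ with the Euclidean length of the coordinate vector, Taylor expand, and kill the curvature contribution at the cubic/quartic level by the antisymmetry of $R$. The only organizational difference is that the paper first identifies all of $\dot x(0),\ddot x(0),\dddot x(0)$ with $\dot\gamma(0),\nabla_{\dot\gamma}\dot\gamma(0),\nabla_{\dot\gamma}^2\dot\gamma(0)$ (using $\partial_l\Gamma_{jk}^i(0)=-\tfrac13(R^i_{jkl}+R^i_{kjl})$) and then invokes the intrinsic unit-speed relations, whereas you extract $\langle v,b\rangle$ directly by differentiating $g_{ij}(x)\dot x^i\dot x^j\equiv1$ and use the equivalent identity $\partial_m\partial_k g_{ij}(0)=-\tfrac13(R_{imjk}+R_{ikjm})$; this is a modest shortcut but not a different idea.
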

\begin{proof}
In order to fix notation, the Levi-Civita connection of $g$ is denoted by $\nabla$ and we will invoke the Einstein summation convention (summation over repeated indices). Pick geodesic normal coordinates centered at $p$ (see e.g.\ \cite{kobayashi}). Then
$$d(p,\gamma(t))=\int_0^t g\left(\dot\gamma(s),\partial_r(s)\right) \,\mathrm ds,$$
where $$\partial_r(s)\coloneqq (\mathrm d\exp_p)_{\bar \gamma(s)}\left(\bar\gamma(s)\left|\bar \gamma(s)\right|_g^{-1}\right)$$ is the radial vector field and $\bar \gamma(t)\coloneqq \exp^{-1}_p(\gamma(t))$. Using the Gauss-Lemma we find

$$\begin{aligned}d(p,\gamma(t)) &=\int_0^t g\left(\dot\gamma(s),\partial_r(s)\right) \,\mathrm ds\\
& = \int_0^tg\left( (\mathrm d\exp_p)_{\bar \gamma(s)}(\dot{\bar\gamma}(s)),(\mathrm d\exp_p)_{\bar \gamma(s)}\left(\tfrac{\bar\gamma(s)}{\left|\bar \gamma(s)\right|_g}\right)\right)\,\mathrm ds\\
& =  \int_0^tg\left( \dot{\bar\gamma}(s),\tfrac{\bar\gamma(s)}{\left|\bar \gamma(s)\right|_g}\right)\,\mathrm ds\\
& =  \int_0^t\frac{\mathrm d}{\mathrm ds}\left|\bar \gamma(s)\right|_g\,\mathrm ds= |\bar{\gamma}(t)|_g.
\end{aligned}
$$
Consider the expansion
$$\bar\gamma(t)=\dot{\bar{\gamma}}(0)t+\frac{\ddot{\bar{\gamma}}(0)}{2!}t^2+\frac{\dddot{\bar{\gamma}}(0)}{3!}t^3+O(t^4)\text{ for }t\to 0.$$
Fix the coordinates such that $\dot{\gamma}=\dot{\bar\gamma}^i\partial_i$ and such that in zero $\dot{\gamma}(0)=\partial_1|_{t=0}$. Since
$$(\mathrm d\exp_p)_0:T_0(T_pM)\cong T_pM\to T_pM$$
is the identity, we find $\dot{\bar{\gamma}}(0)=\dot\gamma(0)$. Moreover since
$$\left.\nabla_{\dot\gamma}\dot\gamma\right|_{t=0}=\left.\nabla_{\partial_1}\dot\gamma\right|_{t=0}\text{ and }\left.\nabla^2_{\dot\gamma}\dot\gamma\right|_{t=0}=\left.\nabla^2_{\partial_1}\dot\gamma\right|_{t=0},$$ we compute $\nabla_{\partial_1}\dot{\bar\gamma}^i\partial_i=\ddot{\bar\gamma}^k\partial_k+\dot{\bar\gamma}^i\Gamma_{1i}^k\partial_k$ and since the Christoffel symbols vanish at $p$, we obtain
$$\left.\nabla_{\dot{\gamma}}\dot{\gamma}\right|_{t=0}=\ddot{\bar\gamma}(0).$$
For the second covariant derivative we compute
$$\begin{aligned}
\nabla_{\partial_1}^2\dot\gamma & = \nabla_{\partial_1}\left(\ddot{\bar\gamma}^k+\dot{\bar\gamma}^i\Gamma_{1i}^k\right)\partial_k\\
& =\left(\dddot{\bar\gamma}^k+\ddot{\bar\gamma}^i\Gamma_{1i}^k+\dot{\bar\gamma}^i\partial_1\Gamma_{1i}^k\right)\partial_k + \left(\ddot{\bar\gamma}^k+\dot{\bar\gamma}^i\Gamma_{1i}^k\right)\Gamma_{1k}^l\partial_l.
\end{aligned}$$
Evaluating in zero gives
$$
\left.\nabla^2_{\dot{\gamma}}\dot{\gamma}\right|_{t=0}=\dddot{\bar\gamma}(0)+\left.\dot{\bar\gamma}^i\partial_1\Gamma_{1i}^k\partial_k\right|_{t=0}
$$
%$$\nabla^2_{\dot{\gamma}}\dot{\gamma}=\dddot{\bar\gamma}^k\partial_k + \dot{\bar\gamma}^m\ddot{\bar\gamma}^k\Gamma_{mk}^l\partial_l
%+\ddot{\bar\gamma}^i\dot{\bar\gamma}^j\Gamma_{ij}^k\partial_k+\dot{\bar\gamma}^i\ddot{\bar\gamma}^j\Gamma_{ij}^k\partial_k+\dot{\bar\gamma}^i\dot{\bar\gamma}^j\dot{\bar\gamma}^m\partial_m\Gamma_{ij}^k\partial_k+\dot{\bar\gamma}^i\dot{\bar\gamma}^j\dot{\bar\gamma}^m\Gamma_{ij}^k\Gamma_{mk}^l\partial_l
%$$
Using the following identity that holds in normal coordinates (see \cite{liao} equation (6)) 
$$
\partial_l\Gamma_{jk}^i(0)=-\frac{1}{3}\left(R^i_{jkl}(0)+R^i_{kjl}(0)\right)
$$
and $\dot{\gamma}(0)=\partial_1|_{t=0}$, we obtain
$$
\left.\nabla^2_{\dot{\gamma}}\dot{\gamma}\right|_{t=0}=\dddot{\bar\gamma}(0)+\left.\partial_1\Gamma_{11}^k\partial_k\right|_{t=0}=\dddot{\bar{\gamma}}(0)-\frac{1}{3}\left(R^k_{111}(0)+R^k_{111}(0)\right)\partial_k|_{t=0}=\dddot{\bar\gamma}(0),$$
where the last equality follows from the antisymmetry of the curvature tensor in the last two slots. It follows that (for $t\to 0$)
$$
d(p,\gamma(t))^2=|\bar{\gamma}(t)|_g^2 = t^2+\frac{1}{2}\left.g(\nabla_{\dot{\gamma}}\dot\gamma,\dot\gamma)\right|_{t=0}t^3+\frac{1}{4}k_g^2t^4+\frac{1}{3} \left.g(\dot\gamma,\nabla_{\dot{\gamma}}^2\dot{\gamma})\right|_{t=0}t^4+O(t^5)
$$
and since $\gamma$ is parametrized by arc length, $\left.g(\nabla_{\dot{\gamma}}\dot\gamma,\dot\gamma)\right|_{t=0}=0$ and $k_g^2=-\left.g(\dot\gamma,\nabla_{\dot{\gamma}}^2\dot{\gamma})\right|_{t=0}$. This implies
$$
d(p,\gamma(t))^2 = t^2-\frac{k_g^2}{12}t^4+O(t^5).
$$
The desired result follows from applying $\sqrt{1+x}=1+\frac{1}{2}x+O(x^2)$ for $x\to 0$.

\end{proof}
\begin{prop}[$C^1$-Obstruction]\label{c1obstruction}
If there exists a unit vector $v\in T_p\Sigma$ such that $|h(v,v)|_g>|\bar A(v,v)|$, no isometric extension $u\in C^1(U,\R^{q})$ can exist.
\end{prop}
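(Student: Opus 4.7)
The plan is to suppose a $C^1$-isometric extension $u:U\to\R^{n+1}$ exists and derive a contradiction by comparing two Taylor expansions of length along a carefully chosen curve. First I would pick a unit-speed $\Sigma$-geodesic $\gamma:[0,\varepsilon)\to\Sigma$ with $\gamma(0)=p$ and $\dot\gamma(0)=v$, which is smooth (hence in particular $C^4$) because $\Sigma$ and $g$ are. Viewed as a curve in $M$, the Gauss equation gives $\nabla^M_{\dot\gamma}\dot\gamma|_{t=0}=A(v,v)$ since $\nabla^\Sigma_{\dot\gamma}\dot\gamma=0$, and because $\Sigma$ has codimension one this vector has $g$-norm $|h(v,v)|_g$. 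Applying Lemma \ref{geodesicdistance} in $(M,g)$ then yields
$$d_M(p,\gamma(t))=t-\frac{|h(v,v)|_g^{2}}{24}t^{3}+O(t^{4}).$$

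Next I would transport the argument to $\R^{n+1}$. Since $f$ is an isometric immersion, $f\circ\gamma$ is unit-speed and is a geodesic of $\bar\Sigma$ (because $f$ induces a local Riemannian isometry between $(\Sigma,g|_\Sigma)$ and $(\bar\Sigma,g_0|_{\bar\Sigma})$). The Gauss equation for $\bar\Sigma\subset\R^{n+1}$ identifies the Euclidean acceleration of $f\circ\gamma$ at $f(p)$ with $\bar A(\mathrm{d}f(v),\mathrm{d}f(v))$, of norm $|\bar A(v,v)|$. Since the geodesic distance in $\R^{n+1}$ is just the Euclidean distance, a second application of Lemma \ref{geodesicdistance} gives
$$|f(p)-f(\gamma(t))|=t-\frac{|\bar A(v,v)|^{2}}{24}t^{3}+O(t^{4}).$$

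Finally I would invoke the hypothetical extension. The pointwise identity $u^*g_0=g$ together with $u\in C^1$ forces $u$ to preserve the length of every $C^1$-curve in $U$, hence $u$ is $1$-Lipschitz as a map $(U,d_M)\to(\R^{n+1},|\cdot|)$. Combined with $u|_\Sigma=f$, this yields $|f(p)-f(\gamma(t))|\leqslant d_M(p,\gamma(t))$. Inserting the two expansions, subtracting $t$ and dividing by $-t^{3}/24$ gives $|\bar A(v,v)|^{2}\geqslant|h(v,v)|_g^{2}+O(t)$, and letting $t\to 0^{+}$ contradicts the hypothesis $|h(v,v)|_g>|\bar A(v,v)|$.

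The main conceptual work is already packaged in Lemma \ref{geodesicdistance}; what remains is to identify the relevant curvatures via the Gauss equation and to secure the Lipschitz bound. The hard part is the latter, because it is exactly where the hypothesis $u\in C^1$ (rather than $C^2$) enters: pointwise isometry together with $C^1$-regularity just suffices to preserve the length of curves, which in turn is just enough to make the order-$t^3$ term of the length comparison bite and reproduce Jacobowitz' obstruction at this lower regularity.
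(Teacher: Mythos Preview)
Your proposal is correct and follows essentially the same route as the paper: take a $\Sigma$-geodesic through $p$ in direction $v$, apply Lemma \ref{geodesicdistance} in $M$ and in $\R^q$ to obtain the $t^3$-expansions of $d_M(p,\gamma(t))$ and $|f(p)-f(\gamma(t))|$, and use that the $C^1$-isometry $u$ is length-preserving (hence distance-nonincreasing) to reach the contradiction. The only cosmetic difference is that the paper makes the Lipschitz step explicit by choosing a minimizing $M$-geodesic $\sigma\subset U$ from $p$ to $\gamma(t)$ and bounding $|u(\sigma(1))-u(\sigma(0))|$ by the length of $u\circ\sigma$, which is exactly the justification behind your ``$1$-Lipschitz'' phrasing (and tacitly requires, for small $t$, that this geodesic stays in $U$).
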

\begin{proof}
We argue by contradiction. Suppose $u$ exists and let $\gamma:[0,\varepsilon)\to \Sigma\cap U$ be a geodesic with $\gamma(0)=p$ and $\dot\gamma(0)=v$ such that $d_M(p,\gamma(t))$ is realized by a minimizing geodesic $\sigma:[0,1]\to U$ for all $t\in[0,\varepsilon)$. Let $\bar p = f(p)$ and $\bar \gamma = f\circ \gamma$. Observe that
$$\begin{aligned}
|u(\sigma(1))-u(\sigma(0))|\leqslant \int_0^1\left|\frac{\mathrm d}{\mathrm dt}(u\circ\sigma)(t)\right|\,\mathrm dt = \int_0^1|\dot\sigma(t)|_g\,\mathrm dt=d_M(p,\gamma(t)),
\end{aligned}$$
hence $d_{\R^{q}}(\bar p,\bar \gamma(t))\leqslant d_M(p,\gamma(t))$. Since we have $k_g(p)  =(\nabla^M_{\dot\gamma}\dot\gamma)(0)=(\nabla^\Sigma_{\dot\gamma}\dot\gamma)(0)+A(v,v)$,
we find$$
d_{M}( p, \gamma(t))=t-\frac{|h(v,v)|_g^2}{24}t^3+O(t^4)\text{ for }t\to 0.
$$
This together with an analogous computation of the geodesic curvature of $\bar\gamma$ in $\bar p$ gives
$$
d_{\R^{q}}(\bar p,\bar \gamma(t))-d_{M}( p, \gamma(t)) =\frac{1}{24}\left(|h(v,v)|_g^2-|\bar A(v,v)|^2\right)t^3+O(t^4) \text{ for }t\to 0
$$
contradicting $d_{\R^{q}}(\bar p,\bar \gamma(t))\leqslant d_M(p,\gamma(t))$.\end{proof}

Observe that the foregoing Proposition does not exclude the existence of a $C^1$-solution to \eqref{dissproblem} on a one-sided neighborhood of $\Sigma$ (the part of $U$ that doesn't contain the geodesic segment $\sigma$ in $M$ that relies $p$ and $\gamma(t)$). However such a ``one-sided'' isometric extension cannot be of class $C^2$ since Corollary \ref{c2obstruction} is a pointwise statement and would apply to points in $\Sigma$.

\subsection{Short Maps} We now present a sufficient condition for the existence of adapted short maps.
\begin{prop}\label{constructionofsubsolutions}
Let $f:\Sigma\to\R^{n+1}$ be an isometric immersion. Suppose there exists a unit normal field $\bar\nu\in\Gamma(f^*N\bar \Sigma)$ such that $h(\cdot,\cdot)-\langle \bar A(\cdot,\cdot),\bar\nu\rangle$ is positive definite. Then around every $p\in \Sigma$, there exists a short map adapted to $(f,g)$. This adapted short map can be chosen to be an embedding.
\end{prop}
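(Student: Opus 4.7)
Proof proposal. My plan is to build $u$ explicitly as a normal perturbation of $f$ in $\R^{n+1}$. The first step is to pick Fermi coordinates $(x',t)$ around $p$ adapted to $\Sigma$, in which $\Sigma$ is locally $\{t=0\}$ and $g = g_{ij}(x',t)\,\mathrm dx^i\mathrm dx^j + \mathrm dt^2$. I will fix the direction of $t$ so that $\partial_t = -\nu$, producing $\partial_t g_{ij}|_{t=0} = 2h_{ij}$, and take $\Omega = \{|x'|<r,\ 0\leqslant t<\delta\}$ with $r,\delta>0$ to be shrunk as needed.

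The ansatz will then be
\[
u(x',t) \coloneqq f(x') - \eta(t)\,\bar\nu(x'), \qquad \eta(t) \coloneqq t - \tfrac{1}{2}t^2.
\]
Smoothness is immediate, $u|_B = f$ holds because $\eta(0) = 0$, and at $t=0$ the Jacobian of $u$ sends $\partial_i\mapsto\partial_i f$ and $\partial_t\mapsto-\bar\nu$; transversality of $\bar\nu$ to $f_*(T\Sigma)$ makes these linearly independent, so $u$ is an immersion near $B$. The two algebraic identities I will exploit are $\langle\partial_i f,\bar\nu\rangle=0$ (normality of $\bar\nu$) and $\langle\bar\nu,\partial_i\bar\nu\rangle=0$ (unitness of $\bar\nu$); together they force $u^*g_0$ to be block-diagonal in the splitting $(x',t)$, with tangential block $g_{ij}|_B + 2\eta(t)\langle \bar A_{ij},\bar\nu\rangle + O(\eta(t)^2)$ and normal block $\eta'(t)^2$. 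Expanding to first order in $t$ and subtracting from $g$ then yields
\[
(g-u^*g_0)_{ij} = 2t\,(h-\langle \bar A,\bar\nu\rangle)_{ij} + O(t^2), \qquad (g-u^*g_0)_{tt} = 2t + O(t^2),
\]
while $(g-u^*g_0)_{it}$ vanishes identically. By the hypothesis the tangential block is positive definite of order $t$, and the choice $\eta''(0)=-1$ makes the normal block positive of order $t$. Hence for $r,\delta$ small enough $g-u^*g_0$ is block-diagonal and positive definite on $\Omega\setminus B$, vanishing on $B$, so $u$ is adapted to $(f,g)$.

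For the embedding statement I will shrink the chart once more so that $f$ restricts to an embedding of $B$ (every immersion is locally an embedding), extend the formula $f-\eta(t)\bar\nu$ to a two-sided neighborhood of $\Sigma$, and apply the inverse function theorem at $t=0$: the extension is a smooth immersion whose restriction to $\Sigma$ is an embedding, hence a diffeomorphism onto its image near $B$, and its restriction to $\bar\Omega$ is the desired embedding. The principal care needed throughout the argument is the bookkeeping of the three sign choices at play (the unit normal $\nu$, the unit normal $\bar\nu$, and the side of $\Sigma$ chosen for $\Omega$); these must be aligned so that the hypothesis translates verbatim into the sign appearing in the leading tangential part of $g-u^*g_0$. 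Once this is fixed consistently as above, the remainder is a routine first-order expansion together with shrinking of the domain.
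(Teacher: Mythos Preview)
Your proposal is correct and follows essentially the same route as the paper: both introduce Fermi-type coordinates via the normal exponential map of $(M,g)$, use the ansatz $u(x',t)=f(x')-s(t)\,\bar\nu(x')$ with $s(0)=0$, $s'(0)=1$, $s''(0)<0$ (your $\eta(t)=t-\tfrac12 t^2$ is one such $s$), and expand $g-u^*g_0$ to first order in $t$ to obtain a block-diagonal defect with tangential part $2t\bigl(h-\langle\bar A,\bar\nu\rangle\bigr)+O(t^2)$ and normal part $1-s'(t)^2$. The one noteworthy difference is that the paper computes the tangential expansion of $g$ by first invoking the Nash embedding theorem to place $(M,g)$ isometrically in some Euclidean space and Taylor-expanding the exponential map there, whereas you work intrinsically via the standard Fermi-coordinate identity $\partial_t g_{ij}|_{t=0}=2h_{ij}$; your route is slightly more elementary in this respect and avoids that external input.
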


\begin{proof}
Choose a submanifold chart around $p\in \Sigma$ as in definition \ref{subsolution} and let $\rho: B\to \Sigma$ be a parametrization. Consider the maps
\begin{eqnarray*}
\psi:B\times[0,\varepsilon]&\to& M \\ (x,t)& \mapsto&  \exp_{\rho(x)}(-t\nu(x))\\
u:B\times[0,\varepsilon]&\to & \R^{n+1}\\ (x,t)&\mapsto & (f\circ\rho)(x)- s(t)\bar\nu(x),\end{eqnarray*}
where $s\in C^\infty(\R_{\geqslant 0},\R)$ satisfies $s(0)=0, s'(0)=1$, $s''(0)<0$ and $\nu\in\Gamma(N\Sigma)$ is the unit vector field such that $\langle A(\cdot,\cdot),\nu\rangle=h(\cdot,\cdot)$. We claim that the map $u$ is a short map adapted to $(f\circ\psi,\psi^*g)$. It is clear from the definition, that $u|_B=f\circ\psi|_B$ and we need to show that $\psi^*g-u^*g_0\geqslant 0$ with equality on $B$ only.
We think of $(M,g)$ as being smoothly and isometrically embedded in some euclidean space (Nash Embedding Theorem \cite{nash}) to consider the Taylor expansion of $\psi$ around $t=0$ (see e.g.\ \cite{monera}):
$$\psi(x,t)=\rho(x)-t\nu(x)+\frac{1}{2}\widetilde A(\nu(x),\nu(x))t^2+O(t^3),$$
where $\widetilde A$ is the second fundamental form of $M$ with respect to that embedding.
A direct computation shows that
$$
\psi^*g-u^*g_0=\begin{pmatrix} \begin{pmatrix}2th_{ij}-2s(t)\langle \bar A(\partial_i,\partial_j),\bar\nu\rangle\end{pmatrix}_{ij} & 0\\
0 &1-s'(t)^2 \end{pmatrix}+O(t^2).
$$
This error is positive definite if and only if $2t\left(h_{ij}-\langle \bar A(\partial_i,\partial_j),\bar\nu\rangle\right)+O(t^2)$ is positive definite, which is the case for small $t>0$ by assumption.
The statement regarding embeddings follows immediately from the compactness of $B\times[0,\varepsilon]$, the fact that $\exp$ is a local diffeomorphism and an appropriate choice of $\varepsilon>0$.
\end{proof}

\begin{example}[Existence of Adapted Short Maps despite the $C^1$-Obstruction]The Euclidean metric of $\R^{n}$ in polar coordinates $(r,\varphi_2,\ldots \varphi_{n})$ reads
$$
\mathrm dr\otimes\mathrm dr + r^2\sum_{i,j=2}^{n}g_{ij}\,\mathrm d\varphi^i\otimes\mathrm d\varphi^j,
$$
where the $g_{ij}$ do not depend on $r$. We now equip $\R^{n}$ with a new metric
$$
\hat g = \mathrm dr\otimes\mathrm dr + \Psi(r)\sum_{i,j=2}^{n}g_{ij}\,\mathrm d\varphi^i\otimes\mathrm d\varphi^j,
$$
where $\Psi$ is a smooth real valued function satisfying $\Psi(1)=1$ and $\Psi'(1)>2$. The map $f:S^{n-1}\hookrightarrow \R^{n}\times\{0\}\subset\R^{n+1}$ is an isometric embedding of $S^{n-1}\subset (\R^{n},\hat g)$ into $(\R^{n+1},g_0)$. 
Let $\partial_i\coloneqq \partial_{\varphi_i}$ for $i\in\{2,\ldots,n\}$ and $h^{\hat g}$ denote the scalar second fundamental form of $S^{n-1}$ in $\R^{n}$. Since $-\partial_r$ is a unit normal vector field on $S^{n-1}$, we find that
$$h_{ij}^{\hat g}=-\mathrm dr(\nabla^{\hat g}_{\partial_i}\partial_j)=-\mathrm dr(\Gamma_{ij}^k\partial_k)=-\Gamma_{ij}^r=\frac{1}{2}\Psi'(1)g_{ij}$$ and hence $h_{ij}^{\hat g}-h_{ij}^{g_{0}}=\frac{1}{2}(\Psi'(1)-2)g_{ij}>0$ in the sense of quadratic forms. Choosing $v\coloneqq \partial_i$ for any $i\in\{2,\ldots,n\}$, the $C^1$-obstruction shows that there exists no $\hat g$-isometric $C^1$-extension of $f$ to a neighborhood of $S^{n-1}$ but according to the previous Proposition, we can construct a short map adapted to $(f,\hat g)$.\end{example}
\section{Convex Integration}
The goal of this section is to turn adapted short maps into one-sided isometric $C^1$-extensions. The construction of these extensions is based on the method of Nash \cite{nash2}, Kuiper \cite{kuiper}, Conti, de Lellis and Sz\'ekelyhidi \cite{delellis}. We start with a short map adapted to $(f,g)$, $u:\bar\Omega\to\R^{n+1}$ and decompose the metric defect into a sum of primitive metrics as
$$
(g-u^*g_0)_x=\sum_{k=1}^ma_k^2(x)\nu_k\otimes\nu_k,
$$
where $a_k^2$ are nonnegative smooth functions on $\bar\Omega\setminus B$ that extend continuously to $B$ and vanish on $B$, $\nu_k\in S^{n-1}$ and $m\in\N$ is a finite number but at most $m_0\leqslant m$ terms in the above sum are non-zero for fixed $x$, where $m_0$ depends only on the dimension $n$. This decomposition is the content of \cite[p. 202, Lemma 1]{laszlo} but we will include a sketch of the proof to convenience the reader.

\begin{lemma}[Decomposition of the Metric Defect into Primitive Metrics]
Let $\mathcal P$ be the space of positive definite $(n\times n)$-matrices. If $A\in\mathcal P$, then there exists a sequence $(\nu_k)_{k\in \N}$ such that $\nu_k\in S^{n-1}$ for all $k$ and a sequence $\mu_k\in C^\infty_c(\mathcal P,[0,\infty))$ such that
$$
A=\sum_k\mu_k^2(A)\nu_k\otimes\nu_k,
$$
where almost all of the coefficients $\mu_k(A)$ are zero.
\end{lemma}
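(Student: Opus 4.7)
The strategy is to first produce smooth \emph{local} decompositions near every $A_0\in\mathcal P$ into positive combinations of rank-one matrices, and then glue these local pieces together by a partition of unity chosen in a way that is compatible with the square-root bookkeeping required by the statement.

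First I would fix $A_0\in\mathcal P$ and find a finite family $\nu_1,\ldots,\nu_N\in S^{n-1}$ together with smooth strictly positive functions $\lambda_1,\ldots,\lambda_N$ defined on a neighborhood $U_{A_0}\subset\mathcal P$ of $A_0$ such that
$$
A=\sum_{k=1}^N \lambda_k(A)\,\nu_k\otimes\nu_k \qquad \text{for all } A\in U_{A_0}.
$$
The two inputs are: (i) the rank-one symmetric matrices $\nu\otimes\nu$, $\nu\in S^{n-1}$, span $\mathrm{Sym}(n)$; and (ii) $\mathcal P$ is the interior of the convex cone they generate. Together, (i) and (ii) let me write $A_0$ as a strictly positive combination of rank-ones $\nu_k\otimes\nu_k$ whose corresponding outer products additionally span $\mathrm{Sym}(n)$ (starting from the spectral decomposition of $A_0$ and adjoining enough extra directions $\nu_k$ with small positive weights, using that the positive cone is open). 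The linear map $L\colon\R^N\to\mathrm{Sym}(n)$, $(\lambda_k)\mapsto\sum\lambda_k\nu_k\otimes\nu_k$ is then surjective and admits a smooth right inverse near $A_0$, yielding the desired smooth positive $\lambda_k(A)$ on some $U_{A_0}$.

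Next I would choose a locally finite cover $(U_{A_\alpha})_\alpha$ of $\mathcal P$ of this form and a partition of unity of the special shape $(\chi_\alpha^2)_\alpha$ subordinate to it, with each $\chi_\alpha\in C^\infty_c(\mathcal P,[0,\infty))$. Such a \emph{squared} partition of unity is obtained from any collection of smooth bump functions $\psi_\alpha\geqslant 0$ whose supports form the cover by setting $\chi_\alpha\coloneqq \psi_\alpha/\sqrt{\sum_\beta\psi_\beta^2}$, which is smooth since the denominator is strictly positive on $\mathrm{supp}(\psi_\alpha)$. Combining with the local decompositions gives
$$
A=\sum_\alpha \chi_\alpha(A)^2 A=\sum_{\alpha,k}\bigl(\chi_\alpha(A)\sqrt{\lambda_{\alpha,k}(A)}\bigr)^2 \nu_{\alpha,k}\otimes\nu_{\alpha,k},
$$
and because $\lambda_{\alpha,k}>0$ on $\mathrm{supp}(\chi_\alpha)\subset U_{A_\alpha}$, the factor $\mu_{\alpha,k}\coloneqq \chi_\alpha\sqrt{\lambda_{\alpha,k}}$, extended by zero outside $U_{A_\alpha}$, lies in $C^\infty_c(\mathcal P,[0,\infty))$. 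Re-indexing the pairs $(\alpha,k)$ as $k\in\N$ produces the decomposition in the statement, and local finiteness of the cover together with finiteness of each local decomposition ensures that only finitely many $\mu_k(A)$ are nonzero at each $A$.

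The main technical obstacle is smoothness across the globalisation: a naive partition of unity would force one to take square roots of functions that may vanish to finite order, which would destroy smoothness. The point of the device $\chi_\alpha^2$ is precisely to absorb the square root into a smooth factor that is supported where $\lambda_{\alpha,k}$ remains strictly positive, so that no problematic square roots ever occur.
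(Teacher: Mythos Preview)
Your argument is correct and shares the same overall architecture as the paper's proof: build smooth \emph{local} decompositions with strictly positive coefficients and then glue them with a squared partition of unity so that the square roots remain smooth. The difference lies in how the local step is carried out. The paper first normalises to the trace-one slice $\widehat{\mathcal P}=\{A\in\mathcal P:\operatorname{tr}A=1\}$, covers it by open nondegenerate simplices with vertices in $\widehat{\mathcal P}$ (Carath\'eodory), and uses barycentric coordinates as the smooth positive local coefficients; each vertex is then diagonalised to produce the rank-one pieces, and a general $A\in\mathcal P$ is handled via $A=(\operatorname{tr}A)\cdot(\operatorname{tr}A)^{-1}A$. You bypass both the trace normalisation and the simplicial geometry by observing directly that a surjective linear map $L\colon\R^N\to\mathrm{Sym}(n)$ admits an affine right inverse taking the value $\lambda^0$ at $A_0$, hence with strictly positive entries on a neighbourhood. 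Both routes are valid; the simplex picture makes the uniform bound $m_0$ on the number of nonzero terms at a point (used later in the paper) immediate, while your version is more streamlined but leaves that bound to be extracted from local finiteness together with a uniform choice of $N$. One small point worth making explicit in your write-up: ``adjoining extra directions with small positive weights'' only works after first subtracting a small positive-definite matrix from $A_0$ (which your phrase ``using that the positive cone is open'' is presumably gesturing at), so that the adjoined terms can be compensated.
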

\begin{proof}[Sketch of Proof]
The set $\widehat{\mathcal P}:=\{A\in\mathcal P, \operatorname{tr}A=1\}$ is an open convex subset of the set
$$L:=\{B\in\mathrm{Sym}(n),\operatorname{tr}B=1\},\quad\dim L= \frac{n(n+1)}{2}-1.$$ Therefore, each element of $\widehat{\mathcal P}$ is contained in the interior of a simplex
$$\operatorname{conv}\left(A_1,\ldots,A_{n(n+1)/2}\right)\subset L$$
(by Carath\'eodory's theorem on convex sets). If $A\in \widehat{\mathcal P}$ is contained in a non-degenerate simplex
$$S_j=\operatorname{conv}\left(A^j_1,\ldots,A^j_{n(n+1)/2}\right),$$
where each $A^j_i\in\widehat{\mathcal P}$, one can write $A$ in barycentric coordinates with respect to $S_j$ as
$$
A=\sum_{i=1}^{n(n+1)/2} \mu_{i,j}^2 (A)A^j_i,
$$
where $\mu_{i,j}\in C^\infty(S_j,(0,1))$. Since each $A^j_i$ is diagonalizable one can write
$$
A^j_i=\sum_{k=1}^n(c^j_{i,k})^2(\nu_{i,k}^j)\otimes(\nu_{i,k}^j),
$$
where $c^j_{i,k}\in\R$ and $\nu_{i,k}^j\in S^{n-1}$.
Now take a partition of unity subordinate to a locally finite cover $\mathcal C$ of $\widehat{\mathcal P}$ by non-degenerate open simplices in $L$ in order to obtain the desired (finite) decomposition
$$
A=\sum_k\mu_k^2(A)\nu_k\otimes\nu_k.
$$
For a general element $A\in \mathcal P$, observe that $A=\operatorname{tr}A(\operatorname{tr}A)^{-1}A$ and use the fact that $(\operatorname{tr}A)^{-1}A\in\widehat{\mathcal P}$. By continuity of the functions $\mu_k$, the decomposition applies to the metric defect of an adapted short map too and since $\bar\Omega$ is compact, the decomposition will also be finite (see \cite{laszlo} for more details).
\end{proof}
Once one has such a decomposition of the metric defect, a \emph{stage} consists of $m$ \emph{steps}, of which each aims at adding one primitive metric $a^2(x)\nu\otimes\nu$. Fix orthonormal coordinates in the target so that the metric $u^*g_0$ can be written as $\nabla u^T\nabla u$, where $\nabla u = (\partial_ju^i)_{ij}$. For a specific unit vector $\nu\in S^{n-1}$ and a smooth nonnegative function $a\in C^\infty(\bar\Omega)$, we aim at finding $v:\bar\Omega\to \R^{n+1}$ satisfying $\nabla v^T\nabla v \approx \nabla u^T\nabla u + a^2\nu\otimes \nu$. Nash solved this problem using an ansatz of the form 
\begin{equation}\label{nashtwist}
v(x) = u(x) +\frac{a(x)}{\lambda}\left(\cos (\lambda \langle x,\nu\rangle)\beta_1(x)+\sin (\lambda \langle x,\nu\rangle)\beta_2(x)\right),
\end{equation}
where $\lambda>0$ is a (large) constant and $\beta_i$ are mutually orthogonal unit normal fields, requiring thus 2 codimensions (\emph{Nash Twist}). We will explain this ansatz in more detail below. The improvement to codimension one has first been achieved by Kuiper \cite{kuiper} with the use of a different ansatz (\emph{Strain}). We will use a \emph{Corrugation} introduced by Conti, de Lellis and Sz\'ekelyhidi \cite{delellis} (see equation \eqref{ansatzdelellis}) and modify it slightly in order to achieve the desired metric change within the class of adapted short maps (Fig. \ref{skizze}). First, we give a geometric motivation for the choice of the Corrugation that follows \cite{laszlo}. Choose vectors
$$\widetilde \xi\coloneqq \nabla u \cdot \left(\nabla u^T\nabla u\right)^{-1}\cdot \nu,\qquad \widetilde \zeta\coloneqq \star\left(\partial_1u\wedge\partial_2u\wedge\ldots\wedge\partial_nu\right),$$
where $\star$ denotes the Hodge star with respect to the usual metric and orientation in $\R^{n+1}$. Let
\begin{equation}\label{corrfields}\xi\coloneqq \frac{\widetilde\xi}{|\widetilde\xi|^2},\qquad\zeta\coloneqq\frac{\widetilde\zeta}{|\widetilde\zeta||\widetilde \xi|}.\end{equation}
We use an ansatz of the form
$$
v(x) = u(x)+\frac{1}{\lambda}\left(\Gamma_1(x,\lambda \langle x,\nu\rangle)\xi(x) + \Gamma_2(x,\lambda \langle x,\nu\rangle)\zeta(x)\right),
$$
where $\Gamma\in C^\infty(\bar V\times S^1,\R^2), (x,t)\mapsto \Gamma(x,t)$ is a family of loops still to be constructed. The differential of $v$ reads
$$
\nabla v = \nabla u + \partial_t\Gamma_1\xi\otimes \nu + \partial_t\Gamma_2\zeta\otimes\nu + \frac{1}{\lambda}E,
$$
where $E\coloneqq \xi\nabla_x\Gamma_1+\zeta\nabla_x\Gamma_2+\Gamma_1\nabla\xi + \Gamma_2\nabla\zeta$ and therefore
\begin{equation}\label{pullbackingeneral}
\nabla v^T\nabla v = \nabla u^T\nabla u + \frac{1}{|\widetilde \xi|^2}\left(2\partial_t\Gamma_1+(\partial_t\Gamma_1)^2+ (\partial_t\Gamma_2)^2\right)\nu\otimes\nu + r,
\end{equation}
where
$$
r =\frac{2}{\lambda}\mathrm{Sym}\left(\nabla u^TE\right)+\frac{2}{\lambda}\left(\partial_t\Gamma_1\nu \odot E^T\xi +  \partial_t\Gamma_2\nu\odot E^T\zeta\right) + \frac{1}{\lambda^2}\left(E^TE\right)
$$
and $\mathrm{Sym}(A)\coloneqq \frac{1}{2}(A+A^T)$ denotes the symmetrization of $A$ and $a\odot b \coloneqq \mathrm{Sym}(a\otimes b)$. In order to equate the coefficient of $\nu\otimes\nu$ in \eqref{pullbackingeneral} and $a^2$, $\partial_t\Gamma$ needs to satisfy the circle equation $\left(\partial_t\Gamma_1+1\right)^2+\partial_t\Gamma_2^2 = 1+|\widetilde \xi|^2 a^2$. Since we require $\Gamma$ to be $2\pi$-periodic, we also need $\oint_{S^1}\partial_t\Gamma\,\mathrm dt =0.$ Note that in codimension two, where $\xi$ and $\zeta$ can be replaced by mutually orthogonal normal vectors say $\beta_1$ and $\beta_2$, a similar ansatz leads to the circle equation $\partial_t\Gamma_1^2+\partial_t\Gamma_2^2 =a^2$ which is clearly fulfilled by the choice $\partial_t\Gamma(x,t)\coloneqq a(x)(-\sin(t),\cos(t))$. This explains the \emph{Nash Twist} \eqref{nashtwist}.
\begin{center}\begin{figure}

\psfragfig[scale=0.8]{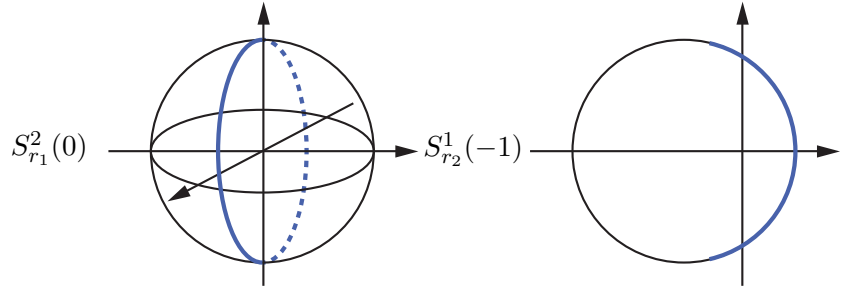}
\caption{The left image illustrates the \emph{Nash Twist} and the right one, the codimension one \emph{Corrugation}.}\label{skizze}

\end{figure}
\end{center}
We look for a $2\pi$-periodic map $\partial_t\Gamma(x,~\cdot~)$ that takes values in a circle parametrized by
$$
(s,t)\mapsto \sqrt{1+s^2}\begin{pmatrix}\cos (f(s)b(t))\\ \sin(f(s)b(t))\end{pmatrix}-\begin{pmatrix}1\\ 0\end{pmatrix},
$$
where $s\coloneqq |\widetilde \xi| a$ and $f(s)$ and $b(t)$ are still to be chosen. In order to satisfy the periodicity condition, we require
$$
\frac{1}{2\pi}\oint_{S^1}\left(\sqrt{1+s^2}\begin{pmatrix}\cos (f(s)b(t))\\ \sin(f(s)b(t))\end{pmatrix}-\begin{pmatrix}1\\ 0\end{pmatrix}\right)\,\mathrm dt = 0.
$$
The second component should be zero independently of $s$ when integrated. This forces $b$ to be $2\pi$-periodic and antisymmetric with respect to $\pi$. The simplest choice is $b(t)\coloneqq \sin t$. For the first component, we aim at finding a function $f$ such that
$$
J_0(f(s))\coloneqq \frac{1}{2\pi}\int_0^{2\pi}\cos(f(s)\sin t)\,\mathrm dt = \frac{1}{\sqrt{1+s^2}}.
$$
Note that $J_0$ is the zeroth Bessel function of the first kind.
\begin{lemma}[Existence of $f$]
There exists a function $f\in C^\infty(\R)$ such that $J_0(f(s))=\frac{1}{\sqrt{1+s^2}}=:w(s)$ satisfying
\begin{equation}\label{festimate}
0<|f'(s)|\leqslant\frac{\sqrt{2+s^2}}{1+s^2}.
\end{equation}
\end{lemma}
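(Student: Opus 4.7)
The plan is to construct $f$ as $J_0^{-1}\circ w$ for an appropriate branch of $J_0^{-1}$. The function $J_0$ is strictly decreasing from $1$ to $0$ on $[0,j_{0,1}]$, where $j_{0,1}\approx 2.405$ is its first positive zero, hence it induces a bijection $[0,j_{0,1}]\to[0,1]$ that is smooth on $(0,j_{0,1}]$ (since $J_0'=-J_1$ does not vanish there). Because $w$ takes values in $(0,1]$, I set $f(s):=J_0^{-1}(w(s))$ using this branch for $s\geqslant 0$ and extend to $\R$ by declaring $f$ odd; since both $J_0$ and $w$ are even, the functional equation $J_0(f(s))=w(s)$ persists on all of $\R$. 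Smoothness of $f$ at $s\neq 0$ is immediate from the inverse function theorem. At $s=0$, the map $y\mapsto J_0(\sqrt{y})$ is smooth with nonzero derivative $-\tfrac14$ at $y=0$, so one can solve for $y=f(s)^2$ smoothly in $s$, obtaining $f(s)^2=2s^2+O(s^4)$; extracting the smooth square root $f(s)=s\sqrt{g(s^2)}$ with $g$ smooth, positive, and $g(0)=2$ then yields $f\in C^\infty(\R)$.

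Differentiating the defining relation produces $J_1(f(s))\,f'(s)=-w'(s)=s/(1+s^2)^{3/2}$, whence
\[
 f'(s)=\frac{s}{(1+s^2)^{3/2}\,J_1(f(s))}.
\]
For $s>0$ one has $f(s)\in(0,j_{0,1})\subset(0,j_{1,1})$, so $J_1(f(s))>0$ and $f'(s)>0$; evenness of $f'$ (which follows from $f$ being odd) extends this to $s<0$, while the Taylor expansion above gives $f'(0)=\sqrt{2}$. This settles the strict positivity $|f'(s)|>0$.

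For the upper bound, substituting the formula for $f'$ reveals that $|f'(s)|\leqslant\sqrt{2+s^2}/(1+s^2)$ is equivalent to $J_1(f(s))^2\geqslant s^2/[(1+s^2)(2+s^2)]$, and using $J_0(f(s))^2=1/(1+s^2)$ this further reduces to the purely analytic Bessel inequality
\[
 F(x):=J_1(x)^2-\frac{J_0(x)^2\bigl(1-J_0(x)^2\bigr)}{1+J_0(x)^2}\geqslant 0,\qquad x\in[0,j_{0,1}].
\]
Proving this is the main technical obstacle: both summands of $F$ vanish to order $x^2$ at the origin, with the gap being only $F(x)=\tfrac{3}{64}x^4+O(x^6)$, so no crude estimate will do. I intend to verify it by using the Bessel recurrences $J_0'=-J_1$ and $J_1'=J_0-J_1/x$ to derive the compact formula
\[
 F'(x)=2J_1(x)\left[\frac{2J_0(x)}{(1+J_0(x)^2)^2}-\frac{J_1(x)}{x}\right],
\]
analysing the single sign change of the bracket on $(0,j_{0,1})$ (it is nonnegative on an initial interval $[0,\phi_\ast]$ and nonpositive on $[\phi_\ast,j_{0,1}]$) and combining this monotonicity picture with $F(0)=0$ and $F(j_{0,1})=J_1(j_{0,1})^2>0$ to conclude $F\geqslant 0$ on the whole interval, which yields \eqref{festimate}.
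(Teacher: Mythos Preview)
Your construction of $f$ (inverting $J_0$ on $[0,j_{0,1}]$, odd extension, smoothness at $0$ via the implicit function theorem applied to $y\mapsto J_0(\sqrt y)$) is essentially the paper's; so is the formula $f'(s)=s/\bigl((1+s^2)^{3/2}J_1(f(s))\bigr)$ and the positivity of $f'$.

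Where you diverge is in the estimate. The paper runs a bootstrap: from the identity $J_1(f)=\tfrac{f}{2}\bigl(J_2(f)+J_0(f)\bigr)$ it extracts a first crude upper bound $f'\leqslant 2s/(f(1+s^2))$, integrates this to get $|f|\leqslant\sqrt{2\log(1+s^2)}$, feeds that back in together with two-sided polynomial bounds on $J_2$ to obtain a lower bound on $|f|$, and finally substitutes into the refined upper bound for $f'$ to reach \eqref{festimate}. Your route is cleaner in principle: substituting $x=f(s)$ you reduce \eqref{festimate} to the single Bessel inequality
\[
F(x)=J_1(x)^2-\frac{J_0(x)^2\bigl(1-J_0(x)^2\bigr)}{1+J_0(x)^2}\geqslant 0\quad\text{on }[0,j_{0,1}],
\]
and your formula $F'(x)=2J_1(x)\Bigl[\dfrac{2J_0(x)}{(1+J_0(x)^2)^2}-\dfrac{J_1(x)}{x}\Bigr]$ is correct.

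The one point that is not yet a proof is the assertion that the bracket $B(x)$ changes sign exactly once on $(0,j_{0,1})$. You have $B(0)=0$, $B(x)=\tfrac{3}{16}x^2+O(x^4)>0$ for small $x$, and $B(j_{0,1})=-J_1(j_{0,1})/j_{0,1}<0$, so at least one sign change occurs; but ``exactly one'' needs an argument. One can show that $h(t)=2t/(1+t^2)^2$ is decreasing on $[1/\sqrt3,1]$, so $h(J_0(x))$ is increasing while $J_0(x)\geqslant 1/\sqrt3$, and $J_1(x)/x$ is strictly decreasing on $(0,j_{0,1}]$ (indeed $(J_1/x)'=-J_2/x<0$ there); hence $B$ is strictly increasing and positive on the interval where $J_0\geqslant 1/\sqrt3$. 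On the remaining interval, however, $h(J_0(x))$ and $J_1(x)/x$ are both decreasing, so monotonicity of $B$ is not automatic, and you still have to rule out further sign changes there. If you can close this (e.g.\ by a further derivative estimate or by bounding $B$ directly on $[x_0,j_{0,1}]$), your argument yields the estimate in one stroke, which is tidier than the paper's iterated integration; but as written the proposal leaves this step as a claim rather than a proof.
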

\begin{proof}
Observe that $w\in C^\infty(\R)$ takes values in $(0,1]$. Since $J_0:[0,\mu]\to[0,1]$ is a bijection ($\mu$ being the smallest positive zero of $J_0$) and $J'_0$ doesn't admit any zero on $(0,\mu]$, its inverse $J_0^{-1}$ is in $C^{\infty}([0,1))$. Now set
%$$
%f(s):=\begin{cases}J_0^{-1}(w(s)) & \text{if }s>0 \\ -J_0^{-1}(w(s)) & \text{if } s<0\\ 0& \text{if }s=0\end{cases}
%$$
$$
f(s):=\operatorname{sgn}s\cdot J_0^{-1}(w(s))
$$
This function is clearly smooth on $\R\setminus\{0\}$. Since $f$ corresponds around zero to the function constructed by means of the implicit function Theorem in Lemma 2 in \cite{delellis}, $f\in C^\infty(\R)$.
\begin{figure}
\begin{center}
\psfragfig[scale=0.5]{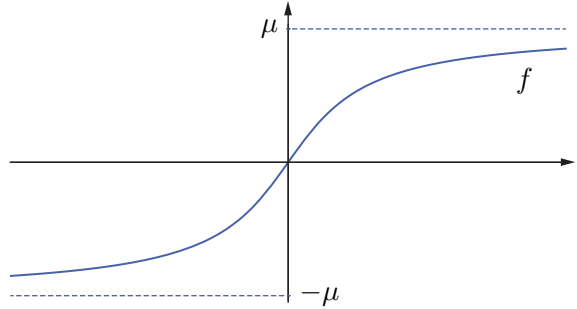}
\caption{The graph of the function $f$ over the interval $[-5,5]$.}
\end{center}
\end{figure}
We will now prove an estimate, from which \eqref{festimate} follows. We will need the following three estimates:
\begin{align}\label{topsoelog}
\frac{2x-2}{x+1}\leqslant\log x & \leqslant \frac{x-1}{\sqrt{x}}, x\geqslant 1\\
\label{boundonJ1}
\frac{|f(s)|}{2\sqrt{1+s^2}} & \leqslant |J_1(f(s))|\\
\label{besselinequality}
\frac{x^2}{8}-\frac{x^4}{96}  \leqslant J_2(x)&\leqslant \frac{x^2}{8}
\end{align}
A proof of \eqref{topsoelog} can be found in \cite{topsoe}. We prove \eqref{boundonJ1} using an integral representation for Bessel functions, trigonometric identities and integration by parts:
\begin{equation}\begin{aligned}\label{besselidentity}
J_1(f(s)) & =\frac{1}{2\pi}\int_0^{2\pi} \sin (f(s)\sin t)\sin t\mathrm dt\\
& = \frac{f(s)}{2\pi}\int_{0}^{2\pi}\cos^2 t\cos(f(s)\sin t)\mathrm dt\\
& = \frac{f(s)}{4\pi}\int_0^{2\pi}\cos(2t)\cos(f(s)\sin t)\mathrm dt + \frac{f(s)}{4\pi}\int_0^{2\pi}\cos(f(s)\sin t)\mathrm dt\\
& = \frac{f(s)}{2}\left(J_2(f(s))+J_0(f(s))\right)\\
& = \frac{f(s)}{2}\left(J_2(f(s))+\frac{1}{\sqrt{1+s^2}}\right).
\end{aligned}
\end{equation}
The identity \eqref{besselidentity} implies \eqref{boundonJ1} since $J_2$ is nonnegative on $[-\mu,\mu]$.
In order to prove \eqref{besselinequality} we use again an integral representation for $J_2$ and integration by parts to obtain
$$
\begin{aligned}
J_2(x) & = \frac{1}{2\pi}\int_0^{2\pi}\cos(2t)\cos(x\sin t)\mathrm dt\\
& =\frac{x}{4\pi}\int_0^{2\pi}\sin(2t)\cos t\sin(x\sin t)\mathrm dt\\
& \leqslant \frac{x^2}{4\pi}\int_0^{2\pi}|\sin (2t)\cos t\sin t|\mathrm dt=\frac{x^2}{8}.
\end{aligned}$$
For the other inequality of \eqref{besselinequality}, we use the same expression for $J_2$ and use $\sin x=x+\mathrm R_3(x)$, where $|\mathrm R_3(x)|\leqslant\frac{|x|^3}{3!}$. It follows that
$$
\begin{aligned}
J_2(x) & = \frac{x}{4\pi}\int_0^{2\pi}\sin(2t)\cos t\left(x\sin t + \mathrm R_3(x\sin t)\right)\mathrm dt\\
&= \frac{x^2}{8}+ \frac{x}{4\pi}\int_0^{2\pi}\sin(2t)\cos t~\mathrm R_3(x\sin t)\mathrm dt
\end{aligned}$$
and hence
$$
\frac{x^2}{8}-J_2(x)  \leqslant \frac{x^4}{24\pi}\int_0^{2\pi}|\sin(2t)\cos t\sin^3 t|\mathrm dt=\frac{x^4}{96}.$$

Using the definition of $f$ implies
$$0<f'(s)=\frac{s}{J_1(f(s))\sqrt{(1+s^2)^3}}$$
and hence \eqref{boundonJ1} implies
$$f'(s) \leqslant \frac{2s}{f(s)(1+s^2)}.$$
Multiplication with $f$ and integration together with the second inequality of \eqref{topsoelog} gives
\begin{equation}\label{boundonf}
|f(s)|\leqslant \sqrt{2\log(1+s^2)}\leqslant \sqrt{\frac{2s^2}{\sqrt{1+s^2}}}.
\end{equation}

We return to the identity \eqref{besselidentity} and use \eqref{besselinequality} together with $\frac{x^2}{16}\leqslant \frac{x^2}{8}-\frac{x^4}{96}$ on $[-\mu,\mu]$ to obtain
\begin{equation}\label{boundonfprime}
\frac{16s}{f(s)(1+s^2)(8+f^2(s)\sqrt{1+s^2})}\leqslant f'(s) \leqslant \frac{32s}{f(s)(1+s^2)(16+f^2(s)\sqrt{1+s^2})}\end{equation}
Multiplication of the first inequality of \eqref{boundonfprime} with $f$ and \eqref{boundonf} implies
$$
\frac{8|s|}{(4+s^2)(1+s^2)}\leqslant \frac{1}{2}\left|(f^2)'(s)\right|
$$
and hence integrating together with the first inequality of \eqref{topsoelog}
$$
|f(s)|\geqslant 2\sqrt{\frac{2}{3}\log\left(\frac{4s^2+4}{s^2+4}\right)}\geqslant4\sqrt{\frac{s^2}{8+5s^2}}.
$$
This lower bound on $f$ can be plugged in the second inequality of \eqref{boundonfprime} to obtain
$$0<|f'(s)|\leqslant \frac{\left(\sqrt{8+5s^2}\right)^3}{2(1+s^2)\left(8+s^2\left(5+\sqrt{1+s^2}\right)\right)}.$$
From this estimate, \eqref{festimate} follows by using $\left(\sqrt{8+5s^2}\right)^3\leqslant \sqrt{8+4s^2}(8+6s^2)$ in the numerator and $5+\sqrt{1+s^2}\geqslant 6$ in the denominator.\end{proof}

With this choice of $f$, let $\Gamma:\R^2\to\R^2$,
$$
\Gamma(s,t)\coloneqq \int_0^t \left(\sqrt{1+s^2}\begin{pmatrix}\cos (f(s)\sin u)\\ \sin(f(s)\sin u)\end{pmatrix}-\begin{pmatrix}1\\ 0\end{pmatrix}\right)\mathrm du.
$$
\begin{lemma}[Corrugation]\label{corrugationfunction}
The function $\Gamma$ is $2\pi$-periodic in the second argument, hence $\Gamma:\R\times S^1\to \R^2$, and it holds that
\begin{equation}
|\partial_t \Gamma(s,t)|\leqslant \sqrt{2}|s|,\label{c1-estimate}
\end{equation}
where the constant $\sqrt{2}$ is optimal.
\end{lemma}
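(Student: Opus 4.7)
The plan is to handle the three assertions of the lemma in sequence: $2\pi$-periodicity in $t$, the sup-norm estimate, and its sharpness.

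For periodicity, since the integrand of $\Gamma$ is itself $2\pi$-periodic in its integration variable, $\Gamma(s,\cdot)$ is $2\pi$-periodic if and only if $\int_0^{2\pi}\partial_t\Gamma(s,u)\,\mathrm du=0$. I would verify this component-wise: the second component $\sqrt{1+s^2}\sin(f(s)\sin u)$ integrates to zero because $u\mapsto\sin u$ is odd about $u=\pi$, and the first component integrates to $2\pi\sqrt{1+s^2}\,J_0(f(s))-2\pi$, which vanishes by the defining identity $J_0(f(s))=1/\sqrt{1+s^2}$.

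For the estimate, a direct computation gives
$$|\partial_t\Gamma(s,t)|^2=2+s^2-2\sqrt{1+s^2}\cos(f(s)\sin t).$$
Since $|f(s)|<\mu<\pi$ by construction, $\cos(f(s)\sin t)$ is minimized over $t$ at $|\sin t|=1$ and equals $\cos f(s)$; thus it suffices to bound $2+s^2-2\sqrt{1+s^2}\cos f(s)$. The core auxiliary estimate is
$$J_0(f)-\cos f\leqslant \tfrac{f^2}{4},$$
which I would derive by writing $\cos(f\sin u)-\cos f = 2\sin\bigl(\tfrac{f(1+\sin u)}{2}\bigr)\sin\bigl(\tfrac{f(1-\sin u)}{2}\bigr)$ via a sum-to-product identity, bounding each factor by its argument, and integrating in $u$ over $[0,2\pi]$. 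Inserting $J_0(f(s))=1/\sqrt{1+s^2}$ yields $\cos f(s)\geqslant 1/\sqrt{1+s^2}-f(s)^2/4$, whence
$$|\partial_t\Gamma(s,t)|^2\leqslant s^2+\frac{\sqrt{1+s^2}}{2}\,f(s)^2.$$
Finally, \eqref{boundonf} gives $f(s)^2\leqslant 2s^2/\sqrt{1+s^2}$, so the right-hand side is bounded by $2s^2$, completing the estimate.

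For sharpness, I would expand asymptotically as $s\to0$: from $J_0(x)=1-x^2/4+O(x^4)$ and $1/\sqrt{1+s^2}=1-s^2/2+O(s^4)$ one obtains $f(s)^2=2s^2+O(s^4)$, and substituting into the formula for $|\partial_t\Gamma(s,\pi/2)|^2$ together with the expansions of $\sqrt{1+s^2}$ and $\cos f(s)$ yields $|\partial_t\Gamma(s,\pi/2)|^2=2s^2+O(s^4)$. Hence $|\partial_t\Gamma(s,\pi/2)|/|s|\to\sqrt 2$ as $s\to 0$, ruling out any smaller constant. The principal difficulty I anticipate is the auxiliary inequality $J_0(f)-\cos f\leqslant f^2/4$; once the product-to-sum identity is invoked it becomes routine, but without it a direct pointwise comparison appears delicate.
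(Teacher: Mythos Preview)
Your proof is correct but proceeds along a genuinely different route from the paper. The paper does not compute $|\partial_t\Gamma|^2$ directly; instead it writes $\partial_t\Gamma(s,t)=\int_0^s\partial_s\partial_t\Gamma(r,t)\,\mathrm dr$ (using $\partial_t\Gamma(0,t)=0$) and then shows the pointwise bound $|\partial_s\partial_t\Gamma|\leqslant\sqrt 2$ as an immediate consequence of the derivative estimate \eqref{festimate} on $f'$. Optimality is then read off from $\partial_s\partial_t\Gamma(0,\pi/2)=\sqrt 2$. Your argument bypasses $f'$ entirely: you bound $|\partial_t\Gamma|^2$ by reducing to the auxiliary inequality $J_0(f)-\cos f\leqslant f^2/4$ (which is neat and correct via the product formula you indicate) and then invoke only the cruder bound \eqref{boundonf} on $f$ itself. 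The trade-off is that the paper's route makes the sharp constant fall out of a one-line computation of $|\partial_s\partial_t\Gamma|^2=\tfrac{s^2}{1+s^2}+(1+s^2)f'(s)^2$, exploiting the full strength of the preceding lemma, whereas your route is more self-contained (it would go through even if one had only the logarithmic bound on $f$ and not the refined estimate on $f'$) at the cost of introducing a separate Bessel-type inequality and a Taylor expansion for sharpness.
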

\begin{proof} For the periodicity we compute directly (see also \cite{delellis} for this computation):
$$\begin{aligned}
\Gamma(s,t+2\pi)-\Gamma(s,t) & = \int_t^{t+2\pi} \left(\sqrt{1+s^2}\begin{pmatrix}\cos (f(s)\sin u)\\ \sin(f(s)\sin u)\end{pmatrix}-\begin{pmatrix}1\\ 0\end{pmatrix}\right)\mathrm du\\
& = \int_0^{2\pi} \left(\sqrt{1+s^2}\begin{pmatrix}\cos (f(s)\sin u)\\ \sin(f(s)\sin u)\end{pmatrix}-\begin{pmatrix}1\\ 0\end{pmatrix}\right)\mathrm du\\
& =2\pi\begin{pmatrix}\sqrt{1+s^2}J_0(f(s))-1 \\ 0\end{pmatrix}=\begin{pmatrix}0\\0\end{pmatrix}.
\end{aligned}$$
In order to prove \eqref{c1-estimate}, observe that since $\partial_t\Gamma(0,t)=0$, integrating in $s$ yields
$$\partial_t\Gamma(s,t)=\int_0^s\partial_s\partial_t\Gamma(r,t)\,\mathrm dr,$$
and we need to show that $|\partial_s\partial_t\Gamma|$ is bounded by $\sqrt{2}$. We compute
$$\begin{aligned}
\partial_s\partial_t\Gamma(s,t) =& \frac{s}{\sqrt{1+s^2}}\begin{pmatrix}\cos(f(s)\sin t)\\ \sin(f(s)\sin t)\end{pmatrix}+\\ & +\sqrt{1+s^2}f'(s)\sin t\begin{pmatrix}-\sin(f(s)\sin t)\\ \cos(f(s)\sin t)\end{pmatrix},\end{aligned}$$
hence using \eqref{festimate}
$$
|\partial_s\partial_t\Gamma(s,t)|^2 \leqslant \frac{s^2}{1+s^2}+(1+s^2)[f'(s)]^2\leqslant 2.
$$
Since $\partial_s\partial_t\Gamma(0,\frac{\pi}{2})=\sqrt{2}$, the constant is optimal.\end{proof}

\begin{remark}
Note that Conti, de Lellis and Sz\'ekelyhidi use the same Corrugation function, but in \cite{delellis}, $\Gamma$ is shown to exist on $[0,\varepsilon]\times S^1$ for some small number $\varepsilon>0$ and estimates on the derivatives of all orders are provided. The difference is that here $\Gamma$ is shown to exist on all of $\R\times S^1$, the estimate \eqref{c1-estimate} holds globally and we don't need estimates on the higher order derivatives at the price of getting lower regularity of the solution in the end.
\end{remark}

Adding a primitive metric $a^2\nu\otimes\nu$ to $\nabla u^T\nabla u$ is done with the ansatz
\begin{equation}\label{ansatzdelellis}
v(x) = u(x)+\frac{1}{\lambda}\left(\Gamma_1(a(x)|\widetilde \xi(x)|,\lambda \langle x,\nu\rangle)\xi(x) + \Gamma_2(a(x)|\widetilde \xi(x)|,\lambda \langle x,\nu\rangle)\zeta(x)\right)
\end{equation}
and we say that $v$ is obtained from $u$ by \emph{Convex Integration}.

\begin{remark}\label{codimensionremark}
In higher codimension $q> n+1$, one can always find (locally) a smooth normal vector field (that plays the role of $\zeta$), hence the same construction can be be used in the case of higher codimension. For this reason we will restrict ourselves to the codimension 1 case.
\end{remark}

\begin{example}
If the Corrugation above is applied to the map $u:S^1\to\R^2$ that sends $S^1$ to a circle with radius $\frac{1}{2}$, the above map produces the following picture:
\begin{figure}[H]
\begin{center}
\psfragfig[scale=0.8]{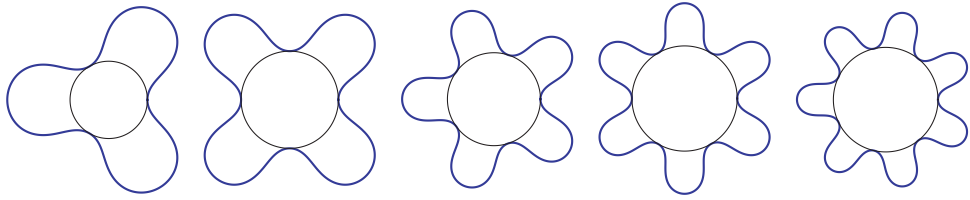}
\caption{$u(S^1)$ and $v(S^1)$ for the choices $\lambda = 3,\ldots,7$}
\end{center}
\end{figure}
\end{example}
\section{Iteration}
\subsection{Step} Since the ansatz \eqref{ansatzdelellis} reaches the desired metric change only up to an error term $O(\lambda^{-1})$, the Corrugation above is not suitable for adapted short maps since it possibly adds a metric defect on $B$, where the adapted short map is already isometric. we will overcome this difficulty by replacing the primitive metric we wish to add by a ``cut-off'' primitive metric that vanishes near $B$. Adding this modified primitive metric can then be done while leaving the initial map $u$ unchanged near $B$. Up to these modifications, we will follow the lines of the Nash-Kuiper iteration scheme as done in the lecture notes by Sz\'ekelyhidi \cite{laszlo}.\\

In order to perform this ``cut-off'', let $\ell>0$ and let $\widetilde\eta_\ell$ be a $C^\infty$-function defined on $\R$ such that $\widetilde\eta_\ell$ vanishes on $\left(-\infty,\ell/2\right]$, is monotonically increasing on $\left[\ell/2,\ell\right]$ and constant with value 1 elsewhere. Let then $\eta_\ell:\bar\Omega\to\R$, $\eta_\ell(x_1,\ldots,x_n)=\widetilde\eta_\ell(x_n)$ and let $\bar\Omega_{j}\coloneqq \{x\in\bar\Omega,\operatorname{dist}(x,B)\leqslant j\}$ where $\operatorname{dist}$ denotes the euclidean distance.

\begin{figure}[H]\begin{center}
\psfragfig[scale=0.9]{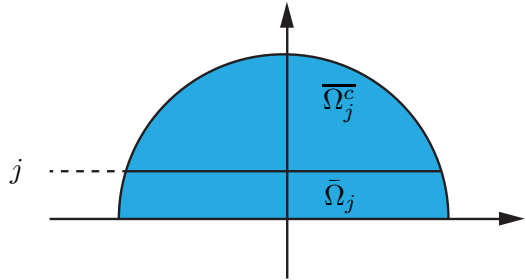}
\caption{Illustration of the definition of $\bar\Omega_j$}
\end{center}\end{figure}
\newpage
\begin{prop}[$k$-th Step]\label{step}
Let $u_{k-1}\in C^\infty(\bar\Omega,\R^{n+1})$ be an immersion. Then for every $\varepsilon>0$ and every $0<\delta<1$ there exists an immersion $u_{k}\in C^{\infty}(\bar\Omega,\R^{n+1})$ depending on a real number $\lambda_k$ that agrees with $u_{k-1}$ on $\bar\Omega_{\ell/2}$ such that the following estimates hold:
\begin{align}
\|u_{k}-u_{k-1}\|_{C^0(\bar\Omega)} & \leqslant \varepsilon,\label{normestimate}\\
\label{diffestimate}
|\nabla u_k-\nabla u_{k-1}|&\leqslant \sqrt 2a_k+O(\lambda_k^{-1}),\\
\label{remestimate}\left\|\nabla u_k^T\nabla u_k -\big[\nabla u_{k-1}^T\nabla u_{k-1} + (1-\delta)\eta_\ell^2a_k^2\nu_k\otimes\nu_k\big]\right\|_{C^0(\bar\Omega)} & \leqslant  \tfrac{\delta^2}{2m}.\end{align}
\end{prop}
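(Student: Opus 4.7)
The plan is to apply the corrugation ansatz \eqref{ansatzdelellis}, but with the amplitude modified to incorporate both the cut-off function $\eta_\ell$ and a $\sqrt{1-\delta}$-damping, which together will give the right-hand side of \eqref{remestimate}. Concretely, I would build $\widetilde\xi, \xi, \zeta$ from $u_{k-1}$ via \eqref{corrfields}, set $s_k(x)\coloneqq \sqrt{1-\delta}\,\eta_\ell(x)\,a_k(x)\,|\widetilde\xi(x)|$, and define
$$
u_k(x) \coloneqq u_{k-1}(x) + \tfrac{1}{\lambda_k}\bigl(\Gamma_1(s_k(x),\lambda_k\langle x,\nu_k\rangle)\,\xi(x) + \Gamma_2(s_k(x),\lambda_k\langle x,\nu_k\rangle)\,\zeta(x)\bigr).
$$
Because $f(0)=0$ forces $\Gamma(0,\cdot)\equiv 0$, and $\eta_\ell\equiv 0$ on $\bar\Omega_{\ell/2}$, the correction vanishes identically on $\bar\Omega_{\ell/2}$, so $u_k = u_{k-1}$ there. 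Since $u_{k-1}$ is an immersion, $\nabla u_{k-1}^T\nabla u_{k-1}$ is invertible and $\xi,\zeta$ are smooth on $\bar\Omega$.

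For \eqref{normestimate} I would use that $\Gamma$ is continuous and $s_k$ ranges in a bounded set, so $\|u_k-u_{k-1}\|_{C^0}\leqslant C/\lambda_k$; taking $\lambda_k$ large enough yields the bound. For \eqref{diffestimate}, differentiating the ansatz exactly as in the derivation of \eqref{pullbackingeneral} gives
$$
\nabla u_k - \nabla u_{k-1} = \partial_t\Gamma_1\,\xi\otimes\nu_k + \partial_t\Gamma_2\,\zeta\otimes\nu_k + \tfrac{1}{\lambda_k}E,
$$
where $E$ collects the $x$-derivatives of $\Gamma_i$, $\xi$, $\zeta$ and is uniformly bounded on $\bar\Omega$. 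Since $\xi\perp\zeta$ with $|\xi|=|\zeta|=1/|\widetilde\xi|$, and Lemma \ref{corrugationfunction} gives $|\partial_t\Gamma|\leqslant \sqrt2\,|s_k|$, the principal term has norm at most $\sqrt2\,|s_k|/|\widetilde\xi| = \sqrt{2(1-\delta)}\,\eta_\ell\,a_k\leqslant \sqrt 2\,a_k$, while the remainder is $O(\lambda_k^{-1})$.

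For \eqref{remestimate}, the key algebraic fact is that the circle parametrization built into $\Gamma$ satisfies $(\partial_t\Gamma_1+1)^2+(\partial_t\Gamma_2)^2 = 1+s_k^2$, so $2\partial_t\Gamma_1+(\partial_t\Gamma_1)^2+(\partial_t\Gamma_2)^2 = s_k^2$ exactly. Substituted into \eqref{pullbackingeneral} this turns the principal term into $s_k^2/|\widetilde\xi|^2\,\nu_k\otimes\nu_k = (1-\delta)\eta_\ell^2 a_k^2\,\nu_k\otimes\nu_k$, which is precisely the cut-off primitive metric we want to add. The error $r$ from \eqref{pullbackingeneral} is $O(\lambda_k^{-1})$, so choosing $\lambda_k$ large enough yields $\|r\|_{C^0}\leqslant\delta^2/(2m)$, and incidentally also keeps $u_k$ an immersion (as a small $C^1$-perturbation of $u_{k-1}$). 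The main obstacle is a uniform control of $r$: one must check that $E$ is bounded on $\bar\Omega$ independently of $\lambda_k$, which could fail where $a_k$ behaves badly near $B$; this is handled automatically because $\eta_\ell$ localizes the whole correction away from $B$, and on $\operatorname{supp}\eta_\ell$ every ingredient ($a_k$, $\eta_\ell$, $\xi$, $\zeta$, $\Gamma$, $s_k$) is smooth with bounded derivatives.
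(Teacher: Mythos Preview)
Your proposal is correct and follows essentially the same approach as the paper: the same corrugation ansatz with amplitude $s_k=(1-\delta)^{1/2}\eta_\ell a_k|\widetilde\xi|$, the same reasons for agreement on $\bar\Omega_{\ell/2}$ and for the $C^0$, gradient, and metric-error estimates, and the same handling of the potential non-smoothness of $a_k$ near $B$ via the cutoff. The only cosmetic difference is that the paper records the slightly sharper structural bound $|E_k|\leqslant C(a_k+\eta_\ell)$ (and hence $|r_k|\leqslant C\lambda_k^{-1}(a_k+\eta_\ell+a_k^2+\eta_\ell^2)$), which is not needed for this proposition but is reused later in the parametric stage.
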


\begin{proof}
We use the ansatz
\begin{equation}\label{kstep}
u_k(x)=  u_{k-1}(x)+\frac{1}{\lambda_k}\bigg[\Gamma_1\left(s,\lambda_k \langle x,\nu_k\rangle\right)\xi_k(x)+ \Gamma_2\left(s,\lambda_k \langle x,\nu_k\rangle\right)\zeta_k(x)\bigg],
\end{equation}
where $s\coloneqq (1-\delta)^{1/2}\eta_\ell(x) a_k(x)|\widetilde \xi_k(x)|$ and the vector fields $\xi_k$ and $\zeta_k$ are constructed as in \eqref{corrfields} and $a_k$ is nonnegative. Observe that $a_k$ might fail to be smooth on $B$, but since $\eta_\ell a_k$ is smooth, so is $u_k$.
Since $\Gamma(0,t)=0$ and $\eta_{\ell}|_{\bar\Omega_{\ell/2}}=0$, $u_k$ and $u_{k-1}$ agree on $\bar\Omega_{\ell/2}$.
From \eqref{kstep} we immediately get $|u_k-u_{k-1}|\leqslant \frac{C}{\lambda_k}$, where $C$ depends on $\bar\Omega$ and $k$. Choosing $\lambda_k$ adequately proves \eqref{normestimate}. For the differential one gets
$$\nabla u_k = \nabla u_{k-1}+\partial_t\Gamma_1\xi_k\otimes\nu_k+\partial_t\Gamma_2\zeta_k\otimes\nu_k+\frac{1}{\lambda_k}E_k,$$
where
$$
E_k\coloneqq (1-\delta)^{\sfrac{1}{2}}\partial_s\Gamma_1\xi_k\otimes\operatorname{grad}(\eta_\ell a_k|\widetilde \xi_k|)+(1-\delta)^{\sfrac{1}{2}}\partial_s\Gamma_2\zeta_k\otimes\operatorname{grad}(\eta_\ell a_k|\widetilde \xi_k|) + \Gamma_1\nabla\xi_k+\Gamma_2\nabla\zeta_k.
$$
Using the definition of $\xi_k$ and $\zeta_k$ we obtain
$$
\begin{aligned}
|\partial_t\Gamma_1\xi_k\otimes\nu_k+\partial_t\Gamma_2\zeta_k\otimes\nu_k|^2 & \leqslant  |\left(\partial_t\Gamma_1\xi_k+\partial_t\Gamma_2\zeta_k\right)\otimes\nu_k|^2\\&\leqslant  |\partial_t\Gamma_1\xi_k+\partial_t\Gamma_2\zeta_k|^2\\
& \leqslant |\widetilde \xi_k|^{-2}|\partial_t\Gamma|^2.
\end{aligned}
$$ 
This estimate together with \eqref{c1-estimate} leads to the pointwise estimate
$$\begin{aligned}
|\nabla u_k-\nabla u_{k-1}|&\leqslant |\partial_t\Gamma_1\xi_k\otimes\nu_k+\partial_t\Gamma_2\zeta_k\otimes\nu_k|+\lambda_k^{-1}|E_k|\\
&\leqslant |\widetilde \xi_k|^{-1}|\partial_t\Gamma|+\lambda_k^{-1}|E_k|\\
& \leqslant \sqrt 2(1-\delta)^{\sfrac{1}{2}}a_k\eta_\ell+\lambda_k^{-1}|E_k|\\
&\leqslant \sqrt 2 a_k+\lambda_k^{-1}|E_k|,
\end{aligned}$$
which proves \eqref{diffestimate}. The pullback metric is given by
$$
\nabla u_k^T\nabla u_k = \nabla u_{k-1}^T\nabla u_{k-1} + (1-\delta)\eta_\ell^2a_k^2\nu_k\otimes\nu_k+r_k,
$$
where
$$
r_k\coloneqq \frac{2}{\lambda_k}\mathrm{Sym}\left(\nabla u_{k-1}^TE_k\right)+\frac{2}{\lambda_k}\left(\partial_t\Gamma_1\nu_k\odot E_k^T\xi_k+\partial_t\Gamma_2\nu_k\odot E_k^T\zeta_k\right)+\frac{1}{\lambda_k^2}E_k^TE_k.
$$
Observe that $|E_k|\leqslant C(a_k+\eta_\ell)$, where the constant $C$ depends on $\ell$, $k$ and $\bar\Omega$. This leads to the estimate
\begin{equation}\label{reminder}
|r_k|\leqslant \frac{C}{\lambda_k}(a_k+\eta_\ell + a_k^2+\eta_\ell^2),
\end{equation}
where again, the constant depends on $\ell$, $k$ and $\bar\Omega$. Hence $\|r_k\|_{C^0(\bar\Omega)}\leqslant\frac{\delta^2}{2m}$ provided $\lambda_k$ is large enough. The computation of the pullback metric also implies
$$\nabla u_k^T\nabla u_k \geqslant \nabla u_{k-1}^T\nabla u_{k-1} + r_k,$$
hence $u_k$ is an immersion provided $\lambda_k$ is large enough.
\end{proof}

\subsection{Stage}
A stage consists in adding iteratively ``cut-off'' primitive metrics while controlling the $C^1$-norm of the resulting maps. This is the key ingredient to later obtain the convergence in $C^1(\bar\Omega,\R^{n+1})$. The process leaves the initial map unchanged near $B$.
\begin{prop}[Stage]\label{stage}
Let $u \in C^\infty(\bar\Omega,\R^{n+1})$ be a short map adapted to $(f,g)$. For any $\varepsilon>0$ there exists a map $\widetilde u\in C^\infty(\bar\Omega,\R^{n+1})$ with the following properties:
\begin{align}
\|u-\widetilde u\|_{C^0(\bar\Omega)} & \leqslant   \varepsilon,\label{c0}\\
\|g - \nabla\widetilde u^T\nabla \widetilde u\|_{C^0(\bar\Omega)} & \leqslant   \varepsilon,\label{metricerror}\\
\|\nabla u - \nabla \widetilde u\|_{C^0(\bar\Omega)} & \leqslant  C\|g - \nabla u^T\nabla u\|^{\sfrac{1}{2}}_{C^0(\bar\Omega)}.\label{c1}
\end{align}
Moreover, $\widetilde u$ is an adapted short map with respect to $(f,g)$ provided $\varepsilon>0$ is small enough.
\end{prop}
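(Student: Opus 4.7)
The plan is to iterate Proposition \ref{step} exactly $m$ times. Apply the decomposition lemma to $g-u^*g_0$ to write
$$
g-u^*g_0 = \sum_{k=1}^m a_k^2\,\nu_k\otimes\nu_k,
$$
where at each point at most $m_0$ terms are non-zero and the $a_k^2$ are continuous on $\bar\Omega$ with $a_k^2|_B=0$ (so that each $\eta_\ell a_k$ is smooth, which is the regularity needed to apply Proposition \ref{step}). Starting from $u_0:=u$, produce the sequence $u_0,u_1,\ldots,u_m$ by applying Proposition \ref{step} with data $(\nu_k,a_k,\delta,\ell,\lambda_k)$ at the $k$-th step, and set $\widetilde u:=u_m$. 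Each step leaves $\bar\Omega_{\ell/2}\supset B$ pointwise fixed, hence $\widetilde u\in C^\infty(\bar\Omega,\R^{n+1})$ with $\widetilde u|_B=f$.

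Summing the per-step contributions in \eqref{remestimate} and using $\sum_k a_k^2\nu_k\otimes\nu_k=g-u^*g_0$, one obtains
$$
g-\nabla\widetilde u^T\nabla\widetilde u \;=\; \bigl(1-(1-\delta)\eta_\ell^2\bigr)(g-u^*g_0)-R,\qquad \|R\|_{C^0(\bar\Omega)}\leqslant\tfrac{\delta^2}{2}.
$$
I would then fix the parameters in the following order. First, choose $\delta>0$ small enough that $\delta\|g-u^*g_0\|_{C^0(\bar\Omega)}+\delta^2/2<\varepsilon/2$; this controls $\bar\Omega\setminus\bar\Omega_\ell$, where $\eta_\ell\equiv 1$ and the prefactor equals $\delta$. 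Next, using that $g-u^*g_0$ vanishes continuously on $B$, choose $\ell>0$ small so that $\|g-u^*g_0\|_{C^0(\bar\Omega_\ell)}<\varepsilon/2$; this controls $\bar\Omega_\ell$, where the prefactor may reach $1$. Together these give \eqref{metricerror}, and \eqref{c0} follows by enlarging each $\lambda_k$ until $\|u_k-u_{k-1}\|_{C^0}\leqslant\varepsilon/m$.

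For \eqref{c1}, summing \eqref{diffestimate} across the $m$ steps and exploiting that at most $m_0$ of the $a_k$ are non-zero at each point together with $\sum_k a_k^2=\operatorname{tr}(g-u^*g_0)\leqslant n\,\|g-u^*g_0\|_{C^0(\bar\Omega)}$, Cauchy--Schwarz gives
$$
\sum_{k=1}^m a_k(x)\leqslant\sqrt{m_0}\left(\sum_{k=1}^m a_k^2(x)\right)^{1/2}\leqslant\sqrt{m_0 n}\,\|g-u^*g_0\|_{C^0(\bar\Omega)}^{1/2},
$$
and the $O(\lambda_k^{-1})$ tails are absorbed into $C$ by a further enlargement of the $\lambda_k$. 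For the adapted-short claim, $\widetilde u=u$ on $\bar\Omega_{\ell/2}$ yields it there at once; on the complement, strict shortness of $u$ off $B$ together with compactness of $\bar\Omega\setminus\bar\Omega_{\ell/2}$ provides a uniform lower bound $g-u^*g_0\geqslant c(\ell)I$, so the principal term $(1-(1-\delta)\eta_\ell^2)(g-u^*g_0)\geqslant\delta c(\ell)I$ is uniformly positive definite, and a final enlargement of the $\lambda_k$ ensuring $\|R\|_{C^0}<\delta c(\ell)/2$ preserves this and the immersion property. The only delicate point is this chained choice of parameters: $\delta$ must be fixed before $\ell$, so that the uniform bound $c(\ell)$ produced afterwards is independent of the $\lambda_k$, which are chosen last and large enough to absorb all residual errors without spoiling positive definiteness.
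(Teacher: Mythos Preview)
Your argument is correct and follows the same route as the paper. One remark on your last paragraph: it is not true that $\delta$ \emph{must} be fixed before $\ell$. The paper in fact reverses your order---choosing $\ell$ first so that $\|g-u^*g_0\|_{C^0(\bar\Omega_\ell)}<\varepsilon/2$, and then $\delta$ small enough that additionally $\delta\operatorname{id}<(g-u^*g_0)|_{\overline{\Omega_{\ell/2}^c}}$ (i.e.\ $\delta<c(\ell)$ in your notation). With that choice the bound $\|R\|_{C^0}\leqslant\delta^2/2$ coming directly from \eqref{remestimate} already yields $g-\widetilde u^*g_0\geqslant\delta(g-u^*g_0)-R\geqslant\delta^2\operatorname{id}-\tfrac{\delta^2}{2}\operatorname{id}>0$ on $\overline{\Omega_{\ell/2}^c}$, so no further enlargement of the $\lambda_k$ is needed for the adapted-short claim. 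Your ordering works equally well, it just costs that one extra adjustment. For \eqref{c1} the paper bounds each amplitude directly via $a_k^2(x)\leqslant(g-u^*g_0)_x(\nu_k,\nu_k)\leqslant\|g-u^*g_0\|_{C^0}$ instead of through the trace, but this is equivalent to your Cauchy--Schwarz argument up to dimensional constants.
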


\begin{proof}
Choose $\ell>0$ such that
\begin{equation}\label{ell}
\|g-\nabla u^T\nabla u\|_{C^0(\bar\Omega_{\ell})}<\frac{\varepsilon}{2}
\end{equation}
and $\delta$ such that the following two conditions are met:
\begin{align}\label{erroroncomplement}\delta\operatorname{id}& <\left(g-\nabla u^T\nabla u\right)|_{\overline{\Omega_{\ell/2}^c}},\\
\label{delta} \delta & <\min\left\{\tfrac{\varepsilon}{2}\|g-\nabla u^T\nabla u\|^{-1}_{C^0(\bar\Omega)},\sqrt{\varepsilon}\right\}.
\end{align}
Now we use Proposition \ref{step} iteratively and choose $\lambda_k$ in each step such that the following three conditions hold:
\begin{align}
\label{conditions1} \|r_k\|_{C^0(\bar\Omega)} & \leqslant   \frac{\delta^2}{2m},\\
\label{conditions2}\|u_k-u_{k-1}\|_{C^0(\bar\Omega)} &  <  \frac{\varepsilon}{m},\\
\label{conditions3}\frac{1}{\lambda_k}\|E_k\|_{C^0(\bar\Omega)} & \leqslant  \frac{1}{m}\|g - \nabla u^T\nabla u\|^{\sfrac{1}{2}}_{C^0(\bar\Omega)}.
\end{align}
We start with the map $u_0=u$ and will get after $m$ steps the desired map $\widetilde u\coloneqq u_m$. We have
$$
\|\widetilde u-u\|_{C^0(\bar\Omega)}\leqslant \sum_{k=1}^m\|u_k-u_{k-1}\|_{C^0(\bar\Omega)}\stackrel{\eqref{conditions2}}{<} \varepsilon.
$$
This proves \eqref{c0}.
We have
\begin{equation}\label{newerror}\begin{aligned}
g-\nabla \widetilde u^T\nabla \widetilde u & =g-\nabla u^T\nabla u+\nabla u^T\nabla u-\nabla \widetilde u^T\nabla \widetilde u\\
& =\sum_{k=1}^m\left(a_k^2\nu_k\otimes\nu_k+\nabla u_{k-1}^T\nabla u_{k-1}-\nabla u_k^T\nabla u_k \right)\\
& =\sum_{k=1}^m\left(a_k^2\nu_k\otimes\nu_k-(1-\delta)\eta_\ell^2a_k^2\nu_k\otimes\nu_k-r_k \right)\\
& =\sum_{k=1}^m\left(\big(1-(1-\delta)\eta_\ell^2\big)a_k^2\nu_k\otimes\nu_k - r_k\right).
\end{aligned}\end{equation}
Using \eqref{ell}, \eqref{delta} and \eqref{conditions1} we get on $\bar\Omega_\ell$:
$$
\|g-\nabla \widetilde u^T\nabla \widetilde u\|_{C^0(\bar\Omega_\ell)} \leqslant \|g-\nabla u^T\nabla u\|_{C^0(\bar\Omega_{\ell})}+\frac{\delta^2}{2}
 < \frac{\varepsilon}{2} + \frac{\varepsilon}{2}\leqslant \varepsilon.
$$
On $\overline{\Omega_\ell^c}$ we use \eqref{delta}, \eqref{conditions1} and \eqref{newerror} to obtain
$$
\|g-\nabla\widetilde u^T\nabla \widetilde u\|_{C^0(\overline{\Omega_\ell^c})}  \leqslant \delta \|g-\nabla u^T\nabla u\|_{C^0(\bar\Omega)} + \frac{\delta^2}{2}< \varepsilon.
$$
which proves \eqref{metricerror}.
For the proof of \eqref{c1}, we use \eqref{diffestimate}, \eqref{conditions3} and the uniform bound
$$
\|g-\nabla u^T\nabla u\|_{C^0(\bar\Omega)} \geqslant  |(g-u^*g_0)_x(\nu_k,\nu_k)|
=\sum_{i=1}^m a_i^2(x)|\langle\nu_k,\nu_i\rangle|^2
 \geqslant a_k^2(x)
$$
to obtain (since at most $m_0$ of the functions $a_k$ are non-zero for fixed $x$) the uniform bound
$$
\begin{aligned}
|\nabla \widetilde u-\nabla u| & \leqslant\sum_{k=1}^m\left(\sqrt 2|a_k|+\lambda^{-1}_{k}\|E_k\|_{C^0(\bar\Omega)}\right)\\
& \leqslant \sqrt 2m_0\|g - \nabla u^T\nabla u\|^{\sfrac{1}{2}}_{C^0(\bar\Omega)}+\|g - \nabla u^T\nabla u\|^{\sfrac{1}{2}}_{C^0(\bar\Omega)}\\
& \leqslant \left(\sqrt 2m_0+1\right)\|g -\nabla u^T\nabla u\|^{\sfrac{1}{2}}_{C^0(\bar\Omega)}.
\end{aligned}
$$
We need to show that the new map $\widetilde u$ is again an adapted short map. Since $\widetilde u|_{\bar\Omega_{\ell/2}}=u_0|_{\bar\Omega_{\ell/2}}$ we need to verify the shortness condition on $\overline{\Omega_{\ell/2}^c}$ only. We use \eqref{erroroncomplement}, \eqref{conditions1} and \eqref{newerror}:
$$
\begin{aligned}
g-\nabla \widetilde u^T\nabla \widetilde u & \geqslant \delta \left(g-\nabla u^T\nabla u\right)-\sum_{k=1}^mr_k\\ & \geqslant \delta^2\operatorname{id} -\sum_{k=1}^mr_k\geqslant \frac{\delta^2}{2}\operatorname{id} .
\end{aligned}
$$
By choosing $\varepsilon>0$ small enough, we ensure that $\widetilde u$ is an immersion. Observe that since $g$ is positive definite on $\bar\Omega$, there exists a positive minimum $\mu$ of the function $\bar\Omega \times S^{n-1}\to \R, (x,v)\mapsto (g)_x(v,v)$ and hence $g \geqslant \mu \operatorname{id}$ in the sense of quadratic forms. Pick any $v\in S^{n-1}$. Then $|(\nabla \widetilde u)_xv|^2  = g_x(v,v)-\left(g-\widetilde u^*g_0\right)_x(v,v)
 \geqslant \mu - \varepsilon>0$, whenever $\varepsilon>0$ is small enough.
\end{proof}

\begin{coro}\label{homotopic steps}
Let $u$, $\widetilde u$ and $\varepsilon>0$ be from the previous Proposition. Then there is a homotopy $\widetilde {\mathrm H}:[0,1]\times\bar\Omega\to\R^{n+1}$ relating $u$ and $\widetilde u$ within the space of short maps adapted to $(f,g)$ and we have the estimates \begin{align}\label{stepclose}
\|\widetilde{\mathrm H} - u\|_{C^0(\bar\Omega)} & \leqslant \varepsilon,\\
\label{homotopsempos}
\widetilde{\mathrm H}(\tau,\cdot)^*g_0-u^*g_0 & \leqslant \left( \|g-u^*g_0\|_{C^0(\bar\Omega)}+\varepsilon\right)\operatorname{id}.
\end{align}

\end{coro}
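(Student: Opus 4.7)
My plan is to build $\widetilde{\mathrm H}$ by concatenating $m$ step-wise homotopies, one for each Corrugation performed in the proof of Proposition~\ref{stage}. For each $k\in\{1,\ldots,m\}$, I would define
\[
H_k(\tau,x)\coloneqq u_{k-1}(x)+\frac{1}{\lambda_k}\bigl[\Gamma_1(\tau s,\lambda_k\langle x,\nu_k\rangle)\xi_k(x)+\Gamma_2(\tau s,\lambda_k\langle x,\nu_k\rangle)\zeta_k(x)\bigr],
\]
i.e.\ modify the ansatz \eqref{kstep} by replacing the amplitude $s=(1-\delta)^{1/2}\eta_\ell a_k|\widetilde\xi_k|$ by $\tau s$ for $\tau\in[0,1]$. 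Since $\Gamma(0,\cdot)=0$, we have $H_k(0,\cdot)=u_{k-1}$, while $H_k(1,\cdot)=u_k$ by construction. Reparametrizing each $H_k$ to $[(k-1)/m,k/m]$ and concatenating yields a continuous homotopy $\widetilde{\mathrm H}:[0,1]\times\bar\Omega\to\R^{n+1}$ connecting $u$ to $\widetilde u$.

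The core check is that $\widetilde{\mathrm H}(\tau,\cdot)$ remains adapted short for every $\tau$. The same circle-equation computation as in Proposition~\ref{step} yields $H_k(\tau,\cdot)^*g_0=u_{k-1}^*g_0+\tau^2(1-\delta)\eta_\ell^2 a_k^2\nu_k\otimes\nu_k+r_k(\tau)$, where $r_k(\tau)$ satisfies the bound \eqref{reminder} uniformly in $\tau\in[0,1]$ (the factors of $\tau$ only help). Choosing $\lambda_k$ slightly larger than in the proof of Proposition~\ref{stage} preserves \eqref{conditions1}--\eqref{conditions3} uniformly in $\tau$. Iterating over the earlier steps, if $\widetilde{\mathrm H}(\tau,\cdot)=H_k(\tau',\cdot)$ then
\[
g-\widetilde{\mathrm H}(\tau,\cdot)^*g_0=\sum_{i<k}\bigl(1-(1-\delta)\eta_\ell^2\bigr)a_i^2\nu_i\otimes\nu_i+\bigl(1-(\tau')^2(1-\delta)\eta_\ell^2\bigr)a_k^2\nu_k\otimes\nu_k+\sum_{j>k}a_j^2\nu_j\otimes\nu_j-\sum_{i<k}r_i-r_k(\tau').
\]
Each coefficient of the form $1-c(1-\delta)\eta_\ell^2$ with $c,\eta_\ell\in[0,1]$ is bounded below by $\delta$, so the argument used in the proof of Proposition~\ref{stage} (together with \eqref{erroroncomplement}) gives $g-\widetilde{\mathrm H}(\tau,\cdot)^*g_0\geqslant\frac{\delta^2}{2}\operatorname{id}$ on $\overline{\Omega_{\ell/2}^c}$; on $\bar\Omega_{\ell/2}$ the homotopy equals the inductively adapted short map $u_{k-1}$. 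Immersivity of $H_k(\tau,\cdot)$ follows as in Proposition~\ref{stage} for $\varepsilon$ small enough.

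Estimate \eqref{stepclose} then follows by telescoping: $\|\widetilde{\mathrm H}(\tau,\cdot)-u\|_{C^0(\bar\Omega)}\leqslant\sum_{i<k}\|u_i-u_{i-1}\|_{C^0(\bar\Omega)}+\|H_k(\tau',\cdot)-u_{k-1}\|_{C^0(\bar\Omega)}$, with the last term of order $O(1/\lambda_k)$ uniformly in $\tau'$, which can be absorbed into $\varepsilon/m$ by further enlarging $\lambda_k$. For \eqref{homotopsempos}, I would observe that adapted shortness of $\widetilde{\mathrm H}(\tau,\cdot)$ gives $\widetilde{\mathrm H}(\tau,\cdot)^*g_0\leqslant g$, hence $\widetilde{\mathrm H}(\tau,\cdot)^*g_0-u^*g_0\leqslant g-u^*g_0$ as quadratic forms; since the right-hand side is positive semidefinite, its largest eigenvalue at each point is bounded by $\|g-u^*g_0\|_{C^0(\bar\Omega)}$, which yields \eqref{homotopsempos} (with the additive $\varepsilon$ as slack).

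The main obstacle is ensuring uniform control in $\tau$ of the remainders $r_k(\tau)$ and of the immersivity of $H_k(\tau,\cdot)$, but every relevant bound depends only on the $C^1$-norm of the map $\tau\mapsto \Gamma(\tau s,\cdot)$, which is finite by Lemma~\ref{corrugationfunction}; hence this reduces to a slight further enlargement of each $\lambda_k$.
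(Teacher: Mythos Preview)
Your approach is essentially the paper's: scale the corrugation amplitude in each step by a parameter, verify that the resulting maps stay adapted short, and concatenate. The paper uses a smooth cutoff $\eta(\tau)=\widetilde\eta_{1/2}(\tau)$ in place of your linear factor $\tau$, but for a continuous homotopy this is immaterial. Your derivation of \eqref{homotopsempos} is actually cleaner than the paper's: once adapted shortness is established you simply use $\widetilde{\mathrm H}(\tau,\cdot)^*g_0\leqslant g$ to bound the difference by $g-u^*g_0\leqslant\|g-u^*g_0\|_{C^0(\bar\Omega)}\operatorname{id}$, whereas the paper tracks the explicit metric increments via \eqref{homotop1} and \eqref{newerror}.

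One small caveat: your line ``Immersivity of $H_k(\tau,\cdot)$ follows as in Proposition~\ref{stage}'' does not quite work as stated. In Proposition~\ref{stage} immersivity of $\widetilde u$ is deduced from the smallness of $\|g-\widetilde u^*g_0\|_{C^0}$, a bound you do not have for the intermediate $H_k(\tau,\cdot)$. What you do have is $H_k(\tau,\cdot)^*g_0\geqslant u_{k-1}^*g_0-\tfrac{\delta^2}{2m}\operatorname{id}$, and to conclude positivity you need a uniform lower bound on $u_{k-1}^*g_0$. The paper handles this by imposing the additional constraint $u^*g_0>\delta^2\operatorname{id}$ on $\delta$ and then checking inductively that $u_k^*g_0>(1-\tfrac{k}{2m})\delta^2\operatorname{id}$. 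This is a one-line fix, but worth making explicit.
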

\begin{proof}
Let $\eta(\tau)\coloneqq \widetilde\eta_{1/2}(\tau)$ and consider the map $\mathrm H: [0,1]\times\bar\Omega\to\R^{n+1}$ given by
$$
(\tau,x)\mapsto u_{k-1}(x)+\frac{1}{\lambda_k}\bigg[\Gamma_1\left(s(\tau),\lambda_k \langle x,\nu_k\rangle\right)\xi_k(x)+ \Gamma_2\left(s(\tau),\lambda_k \langle x,\nu_k\rangle\right)\zeta_k(x)\bigg],
$$
where $s(\tau)\coloneqq \eta(\tau)(1-\delta)^{1/2}\eta_\ell a_k|\widetilde \xi_k|$. Clearly, $\mathrm H$ is a homotopy relating $u_{k-1}$ and $u_{k}$ with the property that $u_{k-1}\equiv\mathrm H(\tau,\cdot)\equiv u_k$ on $\bar\Omega_{\ell/2}$. We must show that whenever $u_{k-1}$ is an adapted short map, so is $\mathrm H(\tau,\cdot)$ for all $\tau$. Observe that $\mathrm H(\tau,\cdot)$ adds the metric term
$$(1-\delta)\eta_{\ell}^2\eta^2(\tau)a_k^2\nu_k\otimes\nu_k+r_k(\tau)$$
to $u_{k-1}^*g_0$. Inequality \eqref{reminder} implies that \eqref{conditions1} holds uniformly in $\tau$ provided $\lambda_k$ is large enough i.e.\ $\|r_k(\tau)\|_{C^0(\bar\Omega)}\leqslant\tfrac{\delta^2}{2m}$.
It follows that
\begin{equation}\label{homotop1}u_{k-1}^*g_0-\frac{\delta^2}{2m}\operatorname{id}\leqslant \mathrm H(\tau,\cdot)^*g_0\leqslant u_{k}^*g_0+\frac{\delta^2}{2m}\operatorname{id}.\end{equation}
We choose $\delta$ in the proof of the previous Proposition such that $u^*g_0>\delta^2\operatorname{id}$ holds additionally. Since
$u_{k}^*g_0-u_{k-1}^*g_0\geqslant r_k$, we find inductively
$$u_k^*g_0> (\delta^2-k\frac{\delta^2}{2m})\operatorname{id}>0$$
and hence
$$u_{k}^*g_0-\frac{\delta^2}{2m}\operatorname{id}>\delta^2\left(\frac{m-1}{2m}\right)\operatorname{id}\geqslant 0,$$
showing that $\mathrm H(\tau,\cdot)$ is never singular.
We are left to show that $g-\mathrm H(\tau,\cdot)^*g_0>0$ on $\overline{\Omega_{\ell/2}^c}$:
$$\begin{aligned}
g-\mathrm H(\tau,\cdot)^*g_0 & \geqslant g-u_k^*g_0-\frac{\delta^2}{2m}\operatorname{id} \\
& \geqslant \sum_{j=1}^k\left((1-(1-\delta)\eta_{\ell}^2)a_j^2\nu_j\otimes\nu_j - r_j\right) + \sum_{j=k+1}^m a_j^2\nu_j\otimes\nu_j-\frac{\delta^2}{2m}\operatorname{id}\\
& \geqslant \sum_{j=1}^m\left(\delta a_j^2\nu_j\otimes\nu_j - r_j\right)-\frac{\delta^2}{2m}\operatorname{id} \geqslant \delta (g - u^*g_0)-\frac{\delta^2(m+1)}{m}\operatorname{id}\\
& >\delta^2\left(\frac{m-1}{2m}\right)\operatorname{id}\geqslant 0.
\end{aligned}$$
The homotopy $\widetilde{\mathrm H}$ relating $u$ and $\widetilde u$ is now obtained from the concatenation of the homotopies relating $u_{k-1}$ and $u_{k}$ for $k=1,\ldots,m$.
From \eqref{conditions2} we know that
$$\|\mathrm H(\tau,\cdot)-u_{k-1}\|_{C^0(\bar\Omega)}\leqslant \frac{\varepsilon}{m}.$$
In particular we get for $\widetilde{\mathrm H}$ the uniform estimate \eqref{stepclose}.
The inequality \eqref{homotop1} implies $u^*g_0-\delta^2\operatorname{id}\leqslant\widetilde{\mathrm H}(\tau,\cdot)^*g_0\leqslant \widetilde u^*g_0+\delta^2\operatorname{id}$ and hence we get using \eqref{newerror} and \eqref{delta}
$$\begin{aligned}
\widetilde{\mathrm H}(\tau,\cdot)^*g_0-u^*g_0 & \leqslant \widetilde u^*g_0-u^*g_0+\delta^2\operatorname{id}\\
& \leqslant \sum_{j=1}^m(1-\delta)\eta_\ell^2a_k^2\nu_k\otimes\nu_k+\delta^2\operatorname{id}\\
& \leqslant \left( \|g-u^*g_0\|_{C^0(\bar\Omega)}+\varepsilon\right)\operatorname{id},
\end{aligned}
$$
thus proving \eqref{homotopsempos}.\end{proof}

\subsection{Passage to the Limit}Proposition \ref{stage} can be used iteratively with an adequate sequence $(\varepsilon_k)_{k\geqslant 1}$ to achieve the convergence in $C^1(\bar\Omega,\R^{n+1})$.

\begin{theorem}[Iteration and $C^0$-density]\label{iteration}
Let $u_0:\bar\Omega\to\R^{n+1}$ be a short map adapted to $(f,g)$. For every $\varepsilon>0$ there exists a sequence $(u_k)_{k\in\N}$ of adapted short maps $u_k\in C^\infty(\bar\Omega,\R^{n+1})$ converging to an isometric immersion $u\in C^1(\bar\Omega,\R^{n+1})$ which coincides with $u_0$ on $B$ and such that $\| u-u_0\|_{C^0(\bar\Omega)}\leqslant \varepsilon$.
\end{theorem}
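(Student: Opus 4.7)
The plan is to apply Proposition \ref{stage} recursively, producing a Cauchy sequence in $C^1(\bar\Omega,\R^{n+1})$ whose limit satisfies all the required properties. Starting from $u_0$, define inductively $u_k$ by applying Proposition \ref{stage} to $u_{k-1}$ with a parameter $\varepsilon_k>0$ to be chosen, so that $u_k$ is again a short map adapted to $(f,g)$. Since each stage leaves $u_{k-1}$ unchanged on a one-sided collar $\bar\Omega_{\ell_k/2}\supset B$, we automatically have $u_k|_B=u_{k-1}|_B=f$ for all $k$.

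The core of the argument is selecting the sequence $(\varepsilon_k)_{k\geqslant 1}$. I would choose, say, $\varepsilon_k\coloneqq \min\{\varepsilon,\eta_0\}\cdot 4^{-k}$, where $\eta_0\coloneqq\|g-\nabla u_0^T\nabla u_0\|_{C^0(\bar\Omega)}$, and shrink $\varepsilon_k$ further (finitely many times) to ensure that at each stage the smallness hypothesis needed in Proposition \ref{stage} to preserve the adapted short property is satisfied. Then estimate \eqref{c0} yields $\|u_k-u_{k-1}\|_{C^0(\bar\Omega)}\leqslant\varepsilon_k$, whence $\sum_{k\geqslant 1}\|u_k-u_{k-1}\|_{C^0(\bar\Omega)}\leqslant\sum_k\varepsilon_k\leqslant\varepsilon$. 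Estimate \eqref{metricerror} gives $\|g-\nabla u_{k-1}^T\nabla u_{k-1}\|_{C^0(\bar\Omega)}\leqslant\varepsilon_{k-1}$ for $k\geqslant 2$ (and is bounded by $\eta_0$ for $k=1$), so inserting this into \eqref{c1} yields
\begin{equation*}
\|\nabla u_k-\nabla u_{k-1}\|_{C^0(\bar\Omega)}\leqslant C\sqrt{\varepsilon_{k-1}}\leqslant C'\cdot 2^{-k},
\end{equation*}
which is summable. The sequence $(u_k)$ is therefore Cauchy in $C^1(\bar\Omega,\R^{n+1})$ and converges to some $u\in C^1(\bar\Omega,\R^{n+1})$ with $\|u-u_0\|_{C^0(\bar\Omega)}\leqslant\varepsilon$ and $u|_B=f$.

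It remains to verify that $u$ is an isometric immersion. Uniform $C^1$ convergence gives $\nabla u_k^T\nabla u_k\to \nabla u^T\nabla u$ uniformly on $\bar\Omega$, and \eqref{metricerror} gives $\|g-\nabla u_k^T\nabla u_k\|_{C^0(\bar\Omega)}\leqslant\varepsilon_k\to 0$, so in the limit $\nabla u^T\nabla u=g$. Since $g$ is positive definite, $\nabla u$ has full rank everywhere, so $u$ is a $C^1$-immersion.

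I expect the main obstacle to be the interplay between the three stage estimates: the $C^0$-closeness \eqref{c0} and the metric error \eqref{metricerror} are free to be made arbitrarily small at each stage, but the $C^1$-closeness \eqref{c1} is governed by the \emph{previous} metric defect, not by the current parameter. This forces the choice $\varepsilon_k\ll\varepsilon_{k-1}$ — geometric decay like $4^{-k}$ works — to obtain simultaneously $\sum\varepsilon_k<\infty$ and $\sum\sqrt{\varepsilon_{k-1}}<\infty$. A minor technical point is ensuring each $u_k$ remains adapted short, which by the last sentence of Proposition \ref{stage} requires only that $\varepsilon_k$ be small enough, and this is compatible with our geometric decay.
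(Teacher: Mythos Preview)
Your proposal is correct and follows essentially the same approach as the paper: iterate Proposition~\ref{stage} with a sequence $(\varepsilon_k)$ satisfying $\sum\varepsilon_k\leqslant\varepsilon$ and $\sum\sqrt{\varepsilon_k}<\infty$ (your choice $\varepsilon_k\sim 4^{-k}$ is a concrete instance), shrinking each $\varepsilon_k$ if necessary to preserve the adapted-short property, and then pass to the $C^1$-limit. Your observation that the $C^1$-increment is controlled by the \emph{previous} metric defect via \eqref{c1} and \eqref{metricerror} is exactly the mechanism the paper uses.
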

\begin{proof}
Apply Proposition \ref{stage} iteratively to $u_0$ with a sequence $(\varepsilon_k)_{k\geqslant 1}$ satisfying
$$\sum_{k=1}^\infty\varepsilon_k\leqslant\varepsilon\text{ and }\sum_{k=1}^\infty\sqrt{\varepsilon_k}<\infty$$
and choose each $\varepsilon_k$ such that after the $k$-th stage, the resulting map is again an adapted short map. Since for $j>i$ we have
$$
\begin{aligned}
\|u_j-u_i\|_{C^1(\bar\Omega)} & \leqslant \|u_j-u_i\|_{C^0(\bar\Omega)}+\|\nabla u_j- \nabla u_i\|_{C^0(\bar\Omega)}\\
& \leqslant \sum_{k=i+1}^\infty\|u_k-u_{k-1}\|_{C^0(\bar\Omega)}+\sum_{k=i+1}^\infty\|\nabla u_k- \nabla u_{k-1}\|_{C^0(\bar\Omega)}\\
& \leqslant \sum_{k=i+1}^\infty\varepsilon_k+\sum_{k=i}^\infty\sqrt{\varepsilon_k}\stackrel{i,j\to\infty}{\longrightarrow} 0,
\end{aligned}
$$
$(u_k)_{k\in\N}$ is Cauchy in $C^1(\bar\Omega,\R^{n+1})$ and therefore admits a limit map $u:\bar\Omega\to\R^{n+1}$ satisfying $u^*g_{0}=g$ (since $\varepsilon_k\to0$ as ${k\to\infty}$). This shows that $u$ is immersive. Observe that the limit of the pullback metrics equals the metric pulled back by $u$ since the convergence is in $C^1(\bar\Omega,\R^{n+1})$. Moreover
$$
\|u-u_0\|_{C^0(\bar\Omega)} \leqslant \sum_{k=1}^\infty\|u_k-u_{k-1}\|_{C^0(\bar\Omega)} \leqslant \sum_{k=1}^\infty \varepsilon_k\leqslant\varepsilon.
$$
The equality on $B$ is clear since $u_{k+1}|_B=u_k|_B$ for all $k\in\N$.
\end{proof}
\section{$h$-Principle}
We will now show that there is a homotopy of short maps adapted to $(f,g)$ relating $u$ and $u_0$ from the previous Theorem. This implies that one-sided isometric $C^1$-extensions satisfy an $h$-principle.

\begin{coro}\label{homotopic}
The maps $u$ and $u_0$ from the previous Theorem are homotopic within the space of short maps adapted to $(f,g)$.
\end{coro}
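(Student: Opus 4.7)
The strategy is to concatenate the stage-wise homotopies provided by Corollary \ref{homotopic steps} on a sequence of subintervals of $[0,1]$ whose right endpoints accumulate at $1$, and to set the value at $\tau = 1$ to be the limit $u$. More precisely, for each $k \geqslant 1$ let $\widetilde{\mathrm H}_k : [0,1] \times \bar\Omega \to \R^{n+1}$ be the homotopy given by Corollary \ref{homotopic steps}, applied with the parameter $\varepsilon_k$ fixed in the proof of Theorem \ref{iteration}, which relates $u_{k-1}$ and $u_k$ through adapted short maps. Fix $\tau_k \coloneqq 1 - 2^{-k}$, so that $\tau_0 = 0$ and $\tau_k \nearrow 1$, and define $\mathrm H : [0,1] \times \bar\Omega \to \R^{n+1}$ on each piece $[\tau_{k-1},\tau_k]$ by reparameterization,
$$
\mathrm H(\tau, x) \coloneqq \widetilde{\mathrm H}_k\!\left( \tfrac{\tau - \tau_{k-1}}{\tau_k - \tau_{k-1}},\, x\right),
$$
and declare $\mathrm H(1,x) \coloneqq u(x)$. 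At each interior gluing point $\tau_k$ the two adjacent definitions agree with $u_k$, so $\mathrm H$ is well-defined and continuous on $[0,1) \times \bar\Omega$.

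For $\tau \in [0,1)$ the slice $\mathrm H(\tau,\cdot)$ coincides with an intermediate map of some $\widetilde{\mathrm H}_k$ and is therefore an adapted short map by Corollary \ref{homotopic steps}. It only remains to verify continuity of $\mathrm H$ at the boundary $\tau = 1$. If $\tau \in [\tau_{k-1},\tau_k]$, the uniform estimate \eqref{stepclose} in Corollary \ref{homotopic steps} gives $\|\mathrm H(\tau,\cdot) - u_{k-1}\|_{C^0(\bar\Omega)} \leqslant \varepsilon_k$, while the telescoping bound
$$
\|u_{k-1} - u\|_{C^0(\bar\Omega)} \leqslant \sum_{j \geqslant k} \|u_j - u_{j-1}\|_{C^0(\bar\Omega)} \leqslant \sum_{j\geqslant k} \varepsilon_j,
$$
combined with the summability $\sum_{j} \varepsilon_j < \infty$ arranged in the proof of Theorem \ref{iteration}, forces $\mathrm H(\tau,\cdot) \to u$ in $C^0(\bar\Omega)$ as $\tau \to 1$. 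Joint continuity on the whole cylinder then follows at once from the joint continuity of each $\widetilde{\mathrm H}_k$ together with the compactness of $\bar\Omega$.

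The bulk of the technical work is already absorbed in the single-stage statement of Corollary \ref{homotopic steps}; the only delicate point here is controlling the approach to $\tau = 1$, and that is governed precisely by the summability of the $\varepsilon_k$ chosen in the iteration. The resulting map $\mathrm H$ is a continuous homotopy relating $u_0$ to $u$ through the space of adapted short maps, with $u$ appearing as the isometric limit at $\tau = 1$, which is exactly the claim of the corollary.
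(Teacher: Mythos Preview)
Your construction is essentially the same as the paper's: both place the stage homotopies $\widetilde{\mathrm H}_k$ on the dyadic subintervals $[1-2^{-(k-1)},1-2^{-k}]$ and set $\mathrm H(1,\cdot)=u$, and both use \eqref{stepclose} together with $\|u_{k-1}-u\|_{C^0}\to 0$ to get continuity at $\tau=1$. (The paper phrases this as a limit of finite concatenations $\mathrm H_k = h_1\circledast(h_2\circledast\cdots\circledast(h_k\circledast c_k))$, but unwinding the parenthesization gives exactly your piecewise reparametrization.)

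There is, however, one point you omit that the paper does carry out: continuity of the pullback metric $\mathrm H(\tau,\cdot)^*g_0$ at $\tau=1$. The paper uses the second estimate \eqref{homotopsempos} from Corollary \ref{homotopic steps} to bound $|\mathrm H(\tau,\cdot)^*g_0 - u_{k-1}^*g_0|$ by $\varepsilon_{k-1}+\varepsilon_k$, and then $\|u_{k-1}^*g_0 - g\|_{C^0}\to 0$ finishes it. Mere $C^0$-convergence of $\mathrm H(\tau,\cdot)$ to $u$ does not by itself guarantee that the induced metrics converge, so without this step the endpoint $u$ is only attached to the homotopy as a $C^0$-limit rather than as a limit in the sense relevant for ``short maps adapted to $(f,g)$''. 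The fix is a one-line addition invoking \eqref{homotopsempos} in the same way you invoked \eqref{stepclose}.
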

\begin{proof}
Observe that Theorem \ref{iteration} together with Corollary \ref{homotopic steps} delivers a Cauchy sequence $u_k:\bar\Omega\to\R^{n+1}$ in $C^1(\bar\Omega,\R^{n+1})$ and homotopies $h_k$ relating $u_{k-1}$ and $u_k$ within the space of short maps adapted to $(f,g)$. Let $c_k(\tau,x)\coloneqq u_k(x)$ be the constant homotopy and define the following homotopies ($\circledast$ denotes concatenation):
$$\mathrm H_k \coloneqq  h_1 \circledast \left(h_2\circledast\cdots\circledast(h_k\circledast c_k)\right).$$
\begin{figure}[H]
\begin{center}
\psfragfig[scale=0.45]{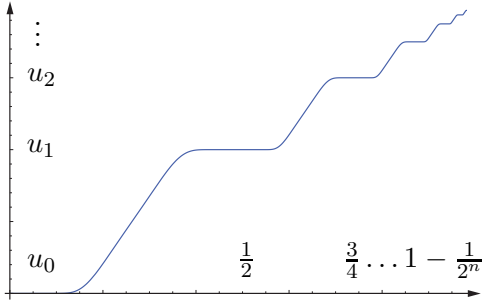}
\caption{Construction of the homotopy $\mathrm H$}
\end{center}
\end{figure}
We will show that
$$
\mathrm H(\tau,x) \coloneqq  \begin{cases} \lim\limits_{k\to\infty}\mathrm H_k(\tau,x), &\text{whenever } \tau\in[0,1)\\ \hfill u(x), &\text{if }\tau =1\end{cases} 
$$
is the desired homotopy between $u$ and $\widetilde u$. We first show that $\mathrm H$ is continuous in $\tau=1$. For fixed $\tau\in[0,1)$, choose $k$ such that $1-\frac{1}{2^{k-1}}\leqslant \tau<1-\frac{1}{2^k}$. Since $k\to\infty$ as $\tau\to 1$, we find
$$
\begin{aligned}
|\mathrm H(\tau,x)-\mathrm H(1,x)| &\leqslant \left|\mathrm H(\tau,x)-\mathrm H\left(\frac{\lfloor 2^k\tau\rfloor}{2^k},x\right)\right|+\left|\mathrm H\left(\frac{\lfloor 2^k\tau\rfloor}{2^k},x\right)- u(x)\right| \\ &\leqslant |\mathrm H(\tau,x)-u_{k-1}(x)|+|u_{k-1}(x)- u(x)|\\ &\leqslant \varepsilon_k+\|u_{k-1}- u\|_{C^0(\bar\Omega)}\stackrel{k\to\infty}{\longrightarrow} 0
\end{aligned}
$$
uniformly in $x$, where the last inequality follows from \eqref{stepclose}. Similarly, we prove continuity of $\mathrm H^*g_0$ in $\tau=1$.
$$
\begin{aligned}
|\mathrm H(\tau,x)^*g_0-g_x| & \leqslant \left|\mathrm H(\tau,x)^*g_0-\mathrm H\left(\frac{\lfloor 2^k\tau\rfloor}{2^k},x\right)^*g_0\right|+\left|\mathrm H\left(\frac{\lfloor 2^k\tau\rfloor}{2^k},x\right)^*g_0-g_x\right| \\ & \leqslant |\mathrm H(\tau,x)^*g_0-(u^*_{k-1}g_0)_x|+|(u^*_{k-1}g_0)_x-g_x|\\ & \leqslant \varepsilon_{k-1}+\varepsilon_k+\|u^*_{k-1}g_0-g\|_{C^0(\bar\Omega)}\stackrel{k\to\infty}{\longrightarrow} 0
\end{aligned}$$
uniformly in $x$, where the last inequality follows from \eqref{homotopsempos}.\end{proof}
Corollary \ref{homotopic} together with Theorem \ref{iteration} means in the language of Gromov \cite{gromov}, Eliashberg and Mishashev \cite{eliashberg}, that the one-sided isometric $C^1$-extensions satisfy a $C^0$-dense $h$-principle. The next Proposition shows that this $C^0$-dense $h$-principle is also parametric, i.e.\ whenever two isometric extensions $u$ are homotopic within the space of adapted short maps, then there is a homotopy of solutions relating them.\\

We start now with a homotopy $\mathrm H:[0,1]\times\bar\Omega\to\R^{n+1}, \mathrm H(\tau,\cdot)\eqqcolon u^\tau$, where $u^0$ and $u^1$ are isometric $C^1$-extensions and $u^\tau$ is a short map adapted to $(f,g)$ for $\tau\in(0,1)$, that is, $u^0$ and $u^1$ are isometric $C^1$-extensions that can be deformed into each other via adapted short maps. The goal is to show that there is a homotopy that carries $u^0$ to $u^1$ in the space of $C^1$-isometric extensions.
\begin{prop}[Parametric Stage]\label{parametricstage}
Let $u^\tau \in C^\infty(\bar\Omega,\R^{n+1})$ be defined as above. For any $\varepsilon>0$ there exists a homotopy $\widetilde{\mathrm H}(\tau,\cdot)\coloneqq \widetilde u^\tau\in C^\infty(\bar\Omega,\R^{n+1})$ such that we have the following estimates uniformly in $\tau$:
\begin{align}
\|u^\tau-\widetilde u^\tau\|_{C^0(\bar\Omega)} & \leqslant   \varepsilon,\label{c0parametric}\\
\|g - (\widetilde u^\tau)^*g_0\|_{C^0(\bar\Omega)} & \leqslant   \varepsilon,\label{metricerrorparametric}\\
\|\nabla u^\tau - \nabla \widetilde u^\tau\|_{C^0(\bar\Omega)} & \leqslant  C\|g - (\nabla u^\tau)^T\nabla u^\tau\|^{\sfrac{1}{2}}_{C^0(\bar\Omega)}.\label{c1parametric}
\end{align}
Moreover $\widetilde u^{0}$ and $\widetilde u^{1}$ are isometric $C^1$-extensions and $\widetilde u^\tau$ is a short map adapted to $(f,g)$ for $\tau\in(0,1)$ provided $\varepsilon>0$ is small enough.\end{prop}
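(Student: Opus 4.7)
The plan is to rerun the proof of Proposition \ref{stage} for every parameter $\tau\in[0,1]$ simultaneously, arranging the construction so that every choice depends continuously on $\tau$ and so that the iteration becomes trivial at $\tau=0$ and $\tau=1$ where $u^\tau$ is already isometric. The key observation is that the corrugation ansatz is driven by the coefficient $a_k(\tau,x)$ of the primitive metric decomposition and $\Gamma(0,\cdot)=0$, so the endpoints are automatically left unchanged by every step.

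Concretely, I would first produce a jointly continuous decomposition
\begin{equation*}
g-(u^\tau)^*g_0=\sum_{k=1}^m a_k(\tau,x)^2\,\nu_k\otimes\nu_k
\end{equation*}
on $[0,1]\times\bar\Omega$, using the decomposition lemma of Section 3 pointwise in $(\tau,x)$ and the fact that the smooth coefficients $\mu_k$ there extend continuously to the boundary of $\mathcal P$. In particular $a_k(0,\cdot)\equiv a_k(1,\cdot)\equiv 0$. I then apply the $k$-th step of Proposition \ref{step} with parameter $\tau$, using the ansatz
\begin{equation*}
u_k^\tau(x)=u_{k-1}^\tau(x)+\tfrac{1}{\lambda_k}\bigl[\Gamma_1(s_\tau,\lambda_k\langle x,\nu_k\rangle)\xi_k^\tau(x)+\Gamma_2(s_\tau,\lambda_k\langle x,\nu_k\rangle)\zeta_k^\tau(x)\bigr],
\end{equation*}
where $s_\tau\coloneqq(1-\delta(\tau))^{1/2}\eta_\ell(x)a_k(\tau,x)|\widetilde\xi_k^\tau(x)|$ and $\xi_k^\tau,\zeta_k^\tau$ are built from $u_{k-1}^\tau$ as in \eqref{corrfields}. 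Because $\Gamma(0,\cdot)=0$, the iteration leaves $u^0$ and $u^1$ unchanged, so after $m$ steps $\widetilde u^\tau\coloneqq u_m^\tau$ automatically satisfies $\widetilde u^0=u^0$ and $\widetilde u^1=u^1$, both still $C^1$-isometric extensions.

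The main obstacle is to choose $\ell$, $\delta$, and $\lambda_k$ so that the conditions \eqref{conditions1}--\eqref{conditions3} of the proof of Proposition \ref{stage} hold \emph{uniformly in $\tau$}. A fixed $\ell>0$ works since $\mathrm H^*g_0$ is jointly continuous on the compact set $[0,1]\times\bar\Omega$, so \eqref{ell} holds uniformly for $\ell$ small enough. A uniform positive $\delta$ is impossible because the defect degenerates at the endpoints; instead I would take $\delta(\tau)$ continuous on $[0,1]$, positive on $(0,1)$ and vanishing at $\tau\in\{0,1\}$, chosen to satisfy \eqref{erroroncomplement}--\eqref{delta} for every $\tau\in(0,1)$. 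The constants $\lambda_k$ can then be picked uniformly in $\tau$ from the pointwise bound \eqref{reminder} together with the compactness of $[0,1]\times\bar\Omega$.

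With these choices the estimates \eqref{c0parametric}--\eqref{c1parametric} follow by carrying out the analysis of Proposition \ref{stage} $\tau$-wise, and the adapted short property of $\widetilde u^\tau$ for $\tau\in(0,1)$ follows exactly as in that proof, giving $g-(\widetilde u^\tau)^*g_0\geqslant\tfrac{\delta(\tau)^2}{2}\operatorname{id}>0$ on $\overline{\Omega_{\ell/2}^c}$. Combined with the endpoint identities $\widetilde u^0=u^0$ and $\widetilde u^1=u^1$, this yields all required properties provided $\varepsilon>0$ is small enough.
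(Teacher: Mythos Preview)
Your strategy diverges from the paper's at one decisive point, and that point is where the argument breaks. You keep the cutoff $\eta_\ell(x)$ in the spatial variable only and compensate for the degeneration of the defect at $\tau\in\{0,1\}$ by letting $\delta(\tau)\to 0$ there. The paper does the opposite: it replaces $\eta_\ell(x)$ by
\[
\Theta_\ell(\tau,x)\coloneqq \eta_\ell(x)\,\eta_\ell(\tau)\,\eta_\ell(1-\tau),
\]
so that the corrugation is also switched off for $\tau$ near the endpoints; then $\widetilde u^\tau=u^\tau$ on a full $\tau$-neighborhood of $\{0,1\}$, and a single constant $\delta>0$ can be chosen because condition \eqref{erroroncomplement} only has to hold on the compact set $\overline{[0,1]\times\bar\Omega\setminus\Theta_\ell^{-1}(0)}$, where $\tau$ is bounded away from $0$ and $1$.

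The gap in your version is the sentence ``The constants $\lambda_k$ can then be picked uniformly in $\tau$.'' To preserve adapted shortness you need, as in the non-parametric proof, $\|r_k^\tau\|_{C^0(\bar\Omega)}\leqslant \tfrac{\delta(\tau)^2}{2m}$ on $\overline{\Omega_{\ell/2}^c}$. From \eqref{reminder} one only gets $|r_k^\tau|\leqslant \tfrac{C}{\lambda_k}\big(a_k^\tau+\eta_\ell+|\nabla_x a_k^\tau|+\ldots\big)$; even granting that $a_k^\tau$ and $\nabla_x a_k^\tau$ vanish at the endpoints, the required inequality forces
\[
\lambda_k\;\gtrsim\;\frac{\sup_x\big(|a_k^\tau|+|\nabla_x a_k^\tau|\big)}{\delta(\tau)^2}.
\]
Since $\delta(\tau)$ is bounded above by the minimal eigenvalue of $g-(u^\tau)^*g_0$ on $\overline{\Omega_{\ell/2}^c}$, which is of order $(a_k^\tau)^2$, the right-hand side behaves like $|a_k^\tau|^{-3}$ and blows up as $\tau\to 0,1$. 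No fixed $\lambda_k$ can satisfy this, so with a $\tau$-independent frequency the map $\widetilde u^\tau$ need not remain short near the endpoints. Cutting off in $\tau$ via $\Theta_\ell$ is precisely what removes this difficulty: the shortness inequality is only needed where $\Theta_\ell>0$, hence for $\tau\in[\ell/2,1-\ell/2]$, and there a uniform $\delta>0$ and uniform $\lambda_k$ suffice.
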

\begin{proof}
We can decompose $g-(u^\tau)^*g_0 = \sum_{k=1}^m (a_k^\tau)^2\nu_k\otimes\nu_k$ and use Proposition \ref{step} with $a_k^\tau$ instead of $a_k$ and replace the function $\eta_\ell$ by a new function $\Theta_\ell(\tau,x)\coloneqq \eta_\ell(x)\eta_\ell(\tau)\eta_\ell(1-\tau)$. This is done because otherwise we cannot have an estimate corresponding to \eqref{erroroncomplement} (see \eqref{erroroncomplementparametric}). With these choices, we can obtain the same estimates as in Proposition \ref{step}: Choose $\ell>0$ such that
\begin{equation}\label{ellparametric}
|g-(\nabla u^\tau)^T\nabla u^\tau|<\frac{\varepsilon}{2}
\end{equation}
holds on $\overline{[0,1]\times\bar\Omega\setminus \Theta_\ell^{-1}(1)}$ and $\delta$ such that
\begin{align}\label{erroroncomplementparametric}\delta\operatorname{id}& < \left(g-(\nabla u^\tau)^T\nabla u^\tau\right)|_{\overline{[0,1]\times\bar\Omega\setminus \Theta_\ell^{-1}(0)}},\\
\label{deltaparametric}
\delta& <\min\left\{\frac{\varepsilon}{2}\left(\max\limits_{\tau\in[0,1]}\|g-(\nabla u^\tau)^T\nabla u^\tau\|_{C^0(\bar\Omega)}\right)^{-1},\sqrt{\varepsilon}\right\}.
\end{align}
We iterate Proposition \ref{step} to get after $m$ steps a homotopy $\widetilde {\mathrm H}(\tau,\cdot)\eqqcolon\widetilde u^\tau$ with $\|\widetilde u^\tau - u^\tau\|<\varepsilon$ uniformly in $\tau$. This shows \eqref{c0parametric} and is done exactly as in the non-parametric case. The computation \eqref{newerror} is replaced by
\begin{equation}\label{newerrorparametric}
g-(\nabla \widetilde u^\tau)^T\nabla\widetilde u^\tau =\sum_{k=1}^m\left(\big(1-(1-\delta)\Theta_\ell^2\big)(a^\tau_k)^2\nu_k\otimes\nu_k - r^\tau_k\right),
\end{equation}
where $\|r^\tau_k\|_{C^0(\bar\Omega)}\leqslant \frac{\delta^2}{2m}$ can be achieved as in Proposition \ref{step}.
Using \eqref{ellparametric} and \eqref{deltaparametric} we get on $\overline{[0,1]\times\bar\Omega\setminus \Theta_\ell^{-1}(1)}$:
$$|g-(\nabla \widetilde u^\tau)^T\nabla\widetilde u^\tau| \leqslant |g-(\nabla u^\tau)^T\nabla u^\tau|+\frac{\delta^2}{2}
 < \frac{\varepsilon}{2} + \frac{\varepsilon}{2}\leqslant \varepsilon.$$
On $\Theta_\ell^{-1}(1)$ we use \eqref{deltaparametric} and \eqref{newerrorparametric} to obtain
$$|g-(\nabla \widetilde u^\tau)^T\nabla\widetilde u^\tau| \leqslant \delta \|g-(\nabla u^\tau)^T\nabla u^\tau\|_{C^0(\bar\Omega)} + \frac{\delta^2}{2}< \varepsilon,$$
which proves \eqref{metricerrorparametric}.
The proof of \eqref{c1parametric} is obtained exactly as in the non-parametric case by choosing $\lambda_k$ in each step large enough:
$$
|\nabla \widetilde u^\tau-\nabla u^\tau|  \leqslant\sum_{k=1}^m\left(C|a^\tau_k|+\frac{1}{\lambda_{k}}\|E^\tau_k\|_{C^0(\bar\Omega)}\right)
 \lesssim \|g - (\nabla u^\tau)^T\nabla u^\tau\|^{\sfrac{1}{2}}_{C^0(\bar\Omega)}.$$
We are left to show that $u^\tau$ is an adapted short map for $\tau\in(0,1)$ and a solution for $\tau=0,1$. Since ${\widetilde u}^\tau\equiv u^\tau$ on $\Theta_\ell^{-1}(0)$ we must only show that ${\widetilde u}^\tau$ is short on $\Theta_\ell^{-1}((0,1])$. We compute
$$
g-(\nabla \widetilde u^\tau)^T\nabla\widetilde u^\tau \geqslant \delta (g-(\nabla  u^\tau)^T\nabla u^\tau)-\sum_{k=1}^mr^\tau_k \geqslant \delta^2\operatorname{id}- \frac{\delta^2}{2}\operatorname{id}>0.
$$
The rest of the proof is exactly the same as in the non-parametric case.
\end{proof}

Theorem \ref{iteration} and its proof can be taken over word by word for the parametric case and delivers the desired homotopy.
\section{From Immersions to Embeddings}
This section follows the lecture notes by Sz\'ekelyhidi \cite{laszlo} very closely. We show that if the adapted short map is an embedding (see Proposition \ref{constructionofsubsolutions}), we can force the isometric extension to be an embedding as well.
\subsection{Step and Stage}
First we show that adding a primitive metric works in the class of embeddings. Taylor's formula implies that $v(y)-v(x)=(\nabla v)_x(y-x)+O(|y-x|^2)$. Write $h\coloneqq x-y$. Then
$$
\begin{aligned}
|v(y)-v(x)|^2 & = \left\langle (\nabla v^T\nabla v)_xh,h\right\rangle + O(|h|^3)\\
& = \left\langle (\nabla u^T\nabla u)_xh,h\right\rangle + (1-\delta)\eta_\ell^2(x)a^2(x)|\langle \nu,h\rangle|^2 +O\left(\lambda^{-1}|h|^2\right) + O(|h|^3)\\
& \geqslant |u(y)-u(x)|^2+O\left(\lambda^{-1}|h|^2\right) + O(|h|^3)\\Ê& = |u(y)-u(x)|^2\left(1+O\left(\lambda^{-1}\right) + O(|h|)\right),
\end{aligned}
$$
hence there exists $\mu>0$ and $\lambda$ large enough, such that $|v(y)-v(x)|\geqslant |u(y)-u(x)|/2$, whenever $|h|<\mu$. The uniform convergence of $\|u-v\|_{C^0(\bar\Omega)}\to0$ as $\lambda\to\infty$ implies that for any $\delta>0$ there exists $\lambda$ large enough such that
$$
\begin{aligned}
|v(y)-v(x)|&\leqslant |v(y)-u(y)|+|u(y)-u(x)|+|u(x)-v(x)|\\
& \leqslant |u(y)-u(x)|+\delta.
\end{aligned}
$$
This in turn implies the uniform convergence
$$
\frac{|v(y)-v(x)|}{|u(y)-u(x)|}\stackrel{\lambda\to\infty}{\to}1
$$
on $\Lambda_\mu\coloneqq \{(x,y)\in\bar\Omega\times\bar\Omega, |y-x|\geqslant\mu\}$. Hence $|v(y)-v(x)|\geqslant |u(y)-u(x)|/2$ whenever $|y-x|\geqslant\mu$ and $\lambda$ large enough. This shows that after a step (and in particular after a stage), we can get an embedding provided $u$ is an embedding.

\subsection{Passage to the Limit}
Now we show that the map $u$ obtained in Theorem \ref{iteration} is an isometric embedding provided $u_0$ is an embedding. Since $\bar\Omega$ is compact and $u$ is immersive, we just need to check, that $u$ is injective. We have
$$
\begin{aligned}
|u(y)-u(x)|^2 & = \left\langle (u^*g_0)_xh,h\right\rangle + o(|h|^2)\\
& = \left\langle(u^*g_0-u_0^*g_0)_xh,h\right\rangle +\left\langle(u_0^*g_0)_xh,h\right\rangle+ o(|h|^2)\\
& = \sum_{k=1}^ma_k^2(x)\left\langle (\nu_k\otimes\nu_k)_xh,h\right\rangle+\left\langle(u_0^*g_0)_xh,h\right\rangle+ o(|h|^2)\\
& \geqslant \left\langle(u_0^*g_0)_xh,h\right\rangle+ o(|h|^2)\\
& = |u_0(x)-u_0(y)|^2\left(1+o(1)\right).
\end{aligned}
$$
As before, we get the existence of a number $\mu>0$ such that
$$|u(x)-u(y)|\geqslant\frac{ |u_0(x)-u_0(y)|}{2}$$
provided $|h|<\mu$. Note that this calculation holds for any $u$ being an isometric extension and is thus independent of the choice of $\varepsilon$ in the Theorem. If $u_0$ is an embedding, we have
$$
\min_{(x,y)\in \Lambda_\mu}|u_0(x)-u_0(y)|\geqslant \widetilde m>0.
$$
Now choose $\varepsilon\coloneqq \frac{\widetilde m}{4}$. The Theorem delivers a map $u$ with $\|u-u_0\|_{C^0(\bar{\Omega})}<\frac{\widetilde m}{4}$, we conclude that
$$
|u_0(x)-u_0(y)|-\frac{\widetilde m}{2} \leqslant |u(x)-u(y)|.
$$
On $\Lambda_\mu$ we thus find $|u(x)-u(y)|\geqslant |u_0(x)-u_0(y)|/2$ which proves that $u$ is an isometric embedding. This together with Theorem \ref{iteration} and Corollary \ref{homotopic} completes the proof of Theorem \ref{mainresultc^1}.

\section{Global $C^1$-Extensions}
In this section, ``global'' has to be understood in the sense that we want to construct solutions to \eqref{dissproblem} on a neighborhood of $\Sigma$ (and not only of a point in $\Sigma$). In order to fix the setting, let $(M,g)$ be an oriented connected and compact Riemannian $n$-manifold and $\Sigma$ an oriented connected and compact codimension one submanifold of $M$ with trivial normal bundle. Let further $f:\Sigma\to\R^{n+1}$ be an isometric immersion.

\subsection{Norms on Manifolds}
Let $\mathcal A\coloneqq \{\varphi_j:U_j\to V_j\subset \R^n\}_j$ be a finite atlas of $M$ such that the $U_j$ are diffeomorphic to open balls in $\R^n$ and such that the maps $\varphi_j$ extend to diffeomorphisms $\varphi_j:\bar U_j\to\bar V_j$. The inverse maps are denoted by $\psi_j$. Let $\mathcal A$ furthermore be the completion of a submanifold atlas of $\Sigma$ in the sense that a coordinate neighborhood $\bar U$ belongs either to the submanifold atlas (i.e.\ maps $\bar U\cap \Sigma$ to $\R^n\times\{0\}$) or doesn't intersect $\Sigma$. 
\begin{definition}\label{norms}
For $f\in C^1(M)$ and $g\in\Gamma(\mathrm S^2(T^*M))$ we define the following norms:
$$\begin{aligned}
\|f\|_{C^0(M)} & =\max_{p\in M}|f(p)|=\max_{j}\|\psi_j^*f\|_{C^0(\bar V_j)},\\
\|\mathrm df\|_{C^0(M)} & \coloneqq \max_{j}\|\psi_j^*\mathrm df\|_{C^0(\bar V_j)}=\max_{j}\|\nabla (f\circ \psi_j)\|_{C^0(\bar V_j)},\\
\|g\|_{C^0(M)} & \coloneqq \max_{j}\|\psi_j^*g\|_{C^0(\bar V_j)},\\
\|f\|_{C^1(M)} & \coloneqq \|f\|_{C^0(M)}+\|\mathrm df\|_{C^0(M)}.
\end{aligned}$$
\end{definition}
Using these definitions we obtain
\begin{lemma}\label{localnorm}
Let $f\in C^0(M)$, $\omega\in\Gamma(T^*M)$ and $g\in \Gamma(\mathrm S^2(T^*M))$ be compactly supported in $\bar U_k$. Then there exists a constant $C$ only depending on the atlas such that the following estimates hold
\begin{align*}
\|f\|_{C^0(M)}&=\|f\circ\psi_k\|_{C^0(\bar V_k)},\\
\|\omega\|_{C^0(M)}&\leqslant  C\|\psi_k^*\omega\|_{C^0(\bar V_k)},\\
\|g\|_{C^0(M)}&\leqslant  C\|\psi_k^*g\|_{C^0(\bar V_k)}.
\end{align*}
\end{lemma}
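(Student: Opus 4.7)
The overall plan is to exploit the compactness of support together with the change-of-variables formulas for tensors. The first identity is essentially immediate from the definition: since $f$ vanishes outside $\bar U_k$, we have
$$\|f\|_{C^0(M)}=\max_j\|\psi_j^*f\|_{C^0(\bar V_j)}=\max_j\sup_{p\in\bar U_j\cap\bar U_k}|f(p)|=\sup_{p\in\bar U_k}|f(p)|=\|f\circ\psi_k\|_{C^0(\bar V_k)},$$
because any chart that meets the support of $f$ computes the same supremum on the overlap with $\bar U_k$.

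For the cotangent field $\omega$, I would proceed by comparing the component representations in two charts. On the overlap $\bar U_j\cap \bar U_k$, writing the transition map $\tau_{jk}\coloneqq\varphi_j\circ\psi_k:\varphi_k(\bar U_j\cap\bar U_k)\to\R^n$, the change-of-variables formula gives
$$(\psi_j^*\omega)_a = (\psi_k^*\omega)_b\,\bigl(\partial_a\tau_{kj}^b\bigr)\circ\varphi_j,$$
so that pointwise $|\psi_j^*\omega|\leqslant n\,\|D\tau_{kj}\|_{C^0}\,|\psi_k^*\omega|\circ\tau_{jk}$ on the overlap, and $\psi_j^*\omega$ vanishes elsewhere by the support assumption. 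Since $\tau_{kj}$ is smooth on a compact set, $\|D\tau_{kj}\|_{C^0}$ is finite, and taking the maximum over the finitely many charts $j$ (and over the finitely many $k$ once the lemma is used globally) produces a constant $C_1$ depending only on $\mathcal A$ with $\|\omega\|_{C^0(M)}\leqslant C_1\|\psi_k^*\omega\|_{C^0(\bar V_k)}$.

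For the symmetric $2$-tensor $g$ the argument is identical except that the transformation law carries two Jacobian factors, yielding
$$(\psi_j^*g)_{ab} = (\psi_k^*g)_{cd}\,\bigl(\partial_a\tau_{kj}^c\bigr)\bigl(\partial_b\tau_{kj}^d\bigr)\circ\varphi_j$$
on the overlap and zero outside. This gives the pointwise bound $|\psi_j^*g|\leqslant n^2\|D\tau_{kj}\|_{C^0}^2|\psi_k^*g|\circ\tau_{jk}$, and a supremum over the finitely many charts produces the constant $C_2$. Setting $C\coloneqq \max(C_1,C_2)$ finishes the proof. There is no real obstacle here: the only delicate point is bookkeeping the finiteness of the atlas and the compactness of the chart closures, both of which were built into the setup of $\mathcal A$ precisely for this reason.
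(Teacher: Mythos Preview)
Your proof is correct and follows essentially the same route as the paper: both arguments bound the chart-$j$ representation by the chart-$k$ representation via the Jacobian of the transition map $\varphi_k\circ\psi_j$, then take a maximum over the finitely many charts and use compactness of the overlaps. The only cosmetic difference is that the paper writes the norm as a supremum over unit vectors, $\sup_{v\in S^{n-1}}|\psi_k^*\omega(\nabla(\varphi_k\circ\psi_j)_x(v))|$, whereas you work with the indexed transformation law; the resulting constants differ by harmless dimensional factors.
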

\begin{proof}
Unwinding the definitions gives the first equality. For the second estimate, one finds
$$
\begin{aligned}
\|\omega\|_{C^0(M)} & = \max_{j}\max_{x\in \bar V_j}\sup_{v\in S^{n-1}}|\omega((\nabla \psi_j)_x(v))|\\
& = \max_{j}\max_{x\in \bar V_j}\sup_{v\in S^{n-1}}|\psi_k^*\omega(\nabla(\varphi_k\circ \psi_j)_x(v))|\\
& = \max_{j}\max_{x\in \bar V_j}\sup_{v\in S^{n-1}}|\nabla(\varphi_k\circ \psi_j)_x(v)|\left|\psi_k^*\omega\left(\tfrac{\nabla(\varphi_k\circ \psi_j)_x(v)}{|\nabla(\varphi_k\circ \psi_j)_x(v)|}\right)\right|\\
& \leqslant \max_{k,j}\|\nabla(\varphi_k\circ \psi_j)\|_{C^0(\bar V_j)}\left\|\psi_k^*\omega\right\|_{C^0(\bar V_k)}\eqqcolon C\left\|\psi_k^*\omega\right\|_{C^0(\bar V_k)}\end{aligned}
$$
and the last estimate is obtained similarly.
\end{proof}
\subsection{One-sided Neighborhoods}
We need to introduce an equivalence relation on $M$ to get an adequate notion of one-sided neighborhood: Pick a submanifold chart $\varphi:\bar U\to \bar V$. The points $p,q\in\bar U$ are equivalent whenever the $n$-th coordinate of $\varphi(p)$ and $\varphi(q)$ have the same sign. Points of coordinate neighborhoods that don't intersect $\Sigma$ are equivalent. The transitive closure of this equivalence relation is denoted by $\sim$ and defines then an equivalence relation on $M$ that clearly doesn't depend on the choice of $\mathcal A$.
\begin{lemma}
It holds that $2\leqslant \# \left(M/_\sim\right)\leqslant 3$.
\end{lemma}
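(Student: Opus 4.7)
The plan is to show that the equivalence classes of $\sim$ are exactly $\Sigma$ together with the connected components of $M\setminus\Sigma$, and then to bound the latter by two using a tubular-neighborhood argument. First I would observe that vanishing of the $n$-th coordinate in a submanifold chart is a coordinate-invariant condition (it detects exactly $\Sigma\cap\bar U$), so the basic relation never pairs a point of $\Sigma$ with a point of $M\setminus\Sigma$, and the class of any point of $\Sigma$ is therefore contained in $\Sigma$. Conversely, by connectedness of $\Sigma$ and the fact that $\mathcal A$ restricts to a submanifold atlas of $\Sigma$, any two points of $\Sigma$ can be joined by a chain of basic relations (all at sign zero), so $\Sigma$ is a single equivalence class.

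Next I would identify the remaining equivalence classes with the connected components of $M\setminus\Sigma$. For the forward direction, two basically related points lie either in a common chart disjoint from $\Sigma$ (which is path-connected in $M\setminus\Sigma$) or on the same open half of a submanifold chart (again path-connected in $M\setminus\Sigma$), so transitivity keeps the connected component fixed. For the reverse direction I would take a continuous path $\gamma\colon[0,1]\to M\setminus\Sigma$; applying the Lebesgue number lemma to the finite open cover $\mathcal A$ of the compact image of $\gamma$, I partition $[0,1]$ into subintervals whose images each lie in some $\bar U_j$. On each subinterval $\gamma$ avoids $\Sigma$, so inside a submanifold chart the $n$-th coordinate has constant non-zero sign by continuity, producing the desired chain of basic relations between consecutive endpoints.

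Finally I would bound the number of components of $M\setminus\Sigma$ by two. The triviality of the normal bundle furnishes a tubular neighborhood $N$ diffeomorphic to $\Sigma\times(-\varepsilon,\varepsilon)$, so $N\setminus\Sigma$ is the disjoint union of two open sets $N^{+}$ and $N^{-}$, both connected because $\Sigma$ is. Let $C$ be any connected component of $M\setminus\Sigma$: since $M$ is connected and $C\neq M$, $C$ is not clopen in $M$, hence $\bar C\cap\Sigma\neq\emptyset$, and the local tubular structure at a boundary point forces $C$ to meet at least one of $N^{\pm}$. Because $N^{\pm}$ is connected and $C$ is both open and closed in $M\setminus\Sigma$ (as a component of a locally connected space), the intersection $C\cap N^{\pm}$ is clopen in $N^{\pm}$, so $N^{\pm}\subseteq C$ whenever $C$ meets $N^{\pm}$. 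Hence at most one component of $M\setminus\Sigma$ can contain $N^{+}$ and at most one can contain $N^{-}$, giving at most two components and therefore at most three classes in $M/_\sim$; the lower bound $\#(M/_\sim)\geqslant 2$ is immediate since $M\setminus\Sigma\neq\emptyset$ and $\Sigma$ forms a class on its own. The main technical point is the clopen argument inside $N^{\pm}$, which relies crucially on the triviality of the normal bundle to guarantee that $N\setminus\Sigma$ splits into two sides, ruling out one-sided configurations such as the core circle of a M\"obius band.
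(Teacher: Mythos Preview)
Your argument is correct and considerably more detailed than the paper's two-line proof. The paper simply observes that points of $\Sigma$ are never equivalent to points of $M\setminus\Sigma$ (giving the lower bound), and for the upper bound asserts that by path-connectedness every point is $\sim$-equivalent to some point in a submanifold chart, where there are trivially at most three local classes (positive, negative, or zero $n$-th coordinate). You take a different route: you identify the equivalence classes \emph{exactly} as $\Sigma$ together with the connected components of $M\setminus\Sigma$, and then bound the number of such components by two via a clopen argument inside the tubular neighborhood $N\cong\Sigma\times(-\varepsilon,\varepsilon)$. Your approach yields strictly more information (the precise structure of $M/_\sim$) and makes explicit a step the paper leaves tacit, namely why the three local classes in \emph{different} submanifold charts merge into at most three global classes. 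The paper's route is quicker and leans directly on the transitive closure built into the definition of $\sim$, whereas yours trades that for standard connectedness arguments and the triviality of the normal bundle; both are valid, and your version would serve as a rigorous expansion of the paper's sketch.
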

\begin{proof}
The points in $\Sigma$ are not equivalent to the points in $M\setminus \Sigma$. This proves the first inequality. On the other hand each point (path-connectedness) is equivalent to a point in a submanifold chart. In a submanifold chart, $\sim$ has trivially 3 equivalence classes, since the $n$-th component of the coordinate expression of a point is positive, negative or zero.
\end{proof}
\begin{definition}
Let $\bar U$ be a closed neighborhood of $\Sigma$ in $M$. If $\sim$ has three equivalence classes, $\bar U$ can be divided into three parts $U_+, \bar U\cap \Sigma$ and $U_-$ according to $\sim$. We call $\bar U_\pm$ one-sided neighborhoods of $\Sigma$.

If $\sim$ has only two equivalence classes, we restrict our considerations to neighborhoods that split into three equivalence classes and use the definition above. This is always possible since the normal bundle of $\Sigma$ is trivial and diffeomorphic to a tubular neighborhood of $\Sigma$ in $M$ via $\Psi:\Sigma\times(-\varepsilon,\varepsilon)\to M$.
\end{definition}

\subsection{Cut-off}
Let $\Phi\coloneqq \Psi^{-1}$, $\pi:\Sigma\times(-\varepsilon,\varepsilon)\to(-\varepsilon,\varepsilon)$ be the projection onto the second factor and let $\bar \Omega$ be a one-sided neighborhood of $\Sigma$ that has a nonempty intersection with $\Psi(\Sigma\times[0,\varepsilon))$ and set $\eta_\ell:\bar\Omega\to [0,1]$
$$
p\mapsto\begin{cases}(\widetilde\eta_\ell\circ\pi\circ\Phi)(p),&\text{whenever }p\in \Psi(\Sigma\times[0,\varepsilon))\\ \hfill 1 &\text{elsewhere}\end{cases}
$$
and fix the following notation: $\bar\Omega_{\beta}\coloneqq \bar \Omega\cap\Psi(\Sigma\times[0,\beta])$. Observe that $\eta_\ell$ vanishes on $\bar \Omega_{\ell/2}$ and equals one on
$\overline{\Omega_\ell^c}$.

\subsection{Step}
Let $\{\varsigma_j\in C_c^\infty(U_j)\}_j$ be a partition of unity subordinate to $\mathcal A$ in the sense that $\sum\limits_{j}\varsigma^2\equiv 1$.

\begin{definition}
Let $\bar\Omega$ be a one-sided neighborhood of $\Sigma$. A \emph{short map adapted to} $(f,g)$, $u:\bar\Omega\to\R^{n+1}$ is a smooth immersion satisfying $u|_\Sigma=f$ and $g-u^*g_0\geqslant 0$ in the sense of quadratic forms with equality on $\Sigma$ only.
\end{definition}
The metric defect of an adapted short map can be written as $g-u^*g_0=\sum_{j}^{}\varsigma_j^2(g-u^*g_0)$. With the notation $\hat\varsigma_j\coloneqq \varsigma_j\circ\psi_j$, one gets
$$\varsigma_j^2(g-u^*g_0)=\varphi_j^*\left(\hat\varsigma_j^2(\psi_j^*g-(u\circ\psi_j)^*g_0)\right)$$
and with the use of a decomposition into primitive metrics for
$$
\psi_j^*g-(u\circ\psi_j)^*g_0 = \sum_{k=1}^ma^2_{k,j}\nu_k^j\otimes\nu_k^j,
$$
the metric defect can be written as
$$
g-u^*g_0=\sum_{k,j}\varphi_j^*\left((\hat\varsigma_ja_{k,j})^2\nu_k^j\otimes\nu_k^j\right).
$$
Note that $m$ depends on $j$ in general but we will suppress this dependence since we can choose the maximal $m$ over all charts.
\begin{prop}[Global step]\label{globalstep}
Let $u\in C^\infty(\bar \Omega,\R^{n+1})$ be an immersion. For every $\varepsilon>0$ there exists an immersion $v\in C^{\infty}(\bar \Omega,\R^{n+1})$ that agrees with $u$ on $\bar \Omega_{\ell/2}$ such that the following estimates hold:
\begin{align}\label{globalc0error}
\|v-u\|_{C^0(\bar \Omega)}& < \frac{\varepsilon}{\#\mathcal Am},\\ \label{globalc1error}\|\mathrm dv-\mathrm du\|_{C^0(\bar \Omega)} & \leqslant C \|g-u^*g_0\|_{C^0(\bar \Omega)}^{\sfrac{1}{2}},\\
v^*g_0 -\left[u^*g_0 + \varphi_j^*\left((1-\delta)(\hat\varsigma_j a_{k,j}\cdot\eta_\ell\circ\psi_j)^2\nu_k^j\otimes\nu_k^j\right)\right] & \leqslant  \frac{\delta^2}{2\#\mathcal Am}(g-u^*g_0).\label{globalmetricerrorc0}\end{align}
\end{prop}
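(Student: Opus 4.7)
This proposition is the global, chart-dependent analogue of Proposition \ref{step}. The strategy is to confine the perturbation to a single chart $\bar U_j$ by means of the partition-of-unity cut-off $\hat\varsigma_j$, to apply Proposition \ref{step} in that chart, and to translate the resulting estimates to $\bar\Omega$ via Lemma \ref{localnorm}.

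Pull the data back to $\bar V_j \subset \R^n$ via $\psi_j$, so that $u$ becomes the immersion $u\circ\psi_j : \bar V_j \to \R^{n+1}$. Apply the Corrugation ansatz \eqref{kstep} with $a_k$ replaced by $\hat\varsigma_j a_{k,j}$, $\nu_k$ by $\nu_k^j$, $\eta_\ell$ by $\eta_\ell\circ\psi_j$, and $\lambda$ a sufficiently large parameter, to obtain a smooth map $\hat v : \bar V_j \to \R^{n+1}$. Since $\hat\varsigma_j$ is compactly supported in $V_j$ and $\Gamma(0,\cdot)=0$, the map $\hat v$ coincides with $u\circ\psi_j$ outside $\operatorname{supp}\hat\varsigma_j$, and therefore
\[ v(p) \coloneqq \begin{cases} \hat v(\varphi_j(p)), & p \in \bar U_j,\\ u(p), & p \in \bar\Omega\setminus\varphi_j^{-1}(\operatorname{supp}\hat\varsigma_j), \end{cases} \]
is a well-defined smooth immersion on $\bar\Omega$ that agrees with $u$ on $\bar\Omega_{\ell/2}$ because $\eta_\ell$ vanishes there.

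The three estimates then follow from their local counterparts in Proposition \ref{step}. For \eqref{globalc0error}, $v-u$ is supported in $\bar U_j$ and hence $\|v-u\|_{C^0(\bar\Omega)} = \|\hat v - u\circ\psi_j\|_{C^0(\bar V_j)} \leqslant \varepsilon/(\#\mathcal A m)$ by \eqref{normestimate} for $\lambda$ large. For \eqref{globalc1error}, the local bound \eqref{diffestimate} gives $|\nabla \hat v - \nabla(u\circ\psi_j)| \leqslant \sqrt 2\,\hat\varsigma_j a_{k,j} + O(\lambda^{-1})$, and combining the pointwise bound $a_{k,j}^2 \leqslant \|g - u^*g_0\|_{C^0(\bar\Omega)}$ (as in the proof of Proposition \ref{stage}) with Lemma \ref{localnorm} yields the global estimate with a constant $C$ depending only on the atlas. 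For \eqref{globalmetricerrorc0}, the local pullback identity from the proof of Proposition \ref{step}, pushed forward via $\varphi_j$, decomposes
\[ v^*g_0 = u^*g_0 + \varphi_j^*\bigl((1-\delta)(\hat\varsigma_j a_{k,j}\,\eta_\ell\circ\psi_j)^2\nu_k^j\otimes\nu_k^j\bigr) + \varphi_j^*r_k, \]
so the claim reduces to dominating $\varphi_j^*r_k$ by $\tfrac{\delta^2}{2\#\mathcal A m}(g - u^*g_0)$ in the sense of quadratic forms.

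The main obstacle is this last step, since the right-hand side is a fraction of the defect $g - u^*g_0$ rather than of the identity. Using \eqref{reminder}, $r_k$ is supported where $\hat\varsigma_j a_{k,j}$ or $\eta_\ell\circ\psi_j$ is nonzero, and both functions vanish on $\bar\Omega_{\ell/2}$; hence on $\bar\Omega_{\ell/2}$ both sides of \eqref{globalmetricerrorc0} vanish. On the complement, the defect $g - u^*g_0$ admits a positive lower bound depending on $\ell$ (implicit in the short-map setting), and taking $\lambda$ sufficiently large makes $|\varphi_j^*r_k|$ smaller than the prescribed fraction of this lower bound, yielding the desired inequality.
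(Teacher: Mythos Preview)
Your proposal is correct and follows essentially the same route as the paper: localize the corrugation to the chart $\bar U_j$ via the cut-off $\hat\varsigma_j$, apply the local step of Proposition~\ref{step} to $u\circ\psi_j$ with amplitude $\hat\varsigma_j a_{k,j}$ and cut-off $\eta_\ell\circ\psi_j$, and transfer the three estimates back to $\bar\Omega$ using Lemma~\ref{localnorm} and the compact support of $v-u$. Your handling of \eqref{globalmetricerrorc0} --- observing that $\varphi_j^*r_k$ vanishes on $\bar\Omega_{\ell/2}$ (because $\eta_\ell$ and all its derivatives vanish there, forcing $s$ and $E_k$ to vanish) and that on $\overline{\Omega_{\ell/2}^c}$ the strict positivity of $g-u^*g_0$ lets one absorb the $O(\lambda^{-1})$ remainder into the prescribed fraction of the defect --- matches the paper's argument exactly.
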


\begin{proof}
We use the ansatz
$$
v(p)=u(p)+\frac{1}{\lambda_{k,j}}\bigg[\Gamma_1\left(s,\lambda_{k,j} \langle x,\nu^j_{k}\rangle\right)\xi_{k,j}(x)+ \Gamma_2\left(s,\lambda_{k,j} \langle x,\nu^j_k\rangle\right)\zeta_{k,j}(x)\bigg],
$$
where $s\coloneqq (1-\delta)^{1/2}\eta_\ell(\psi_j(x))\hat\varsigma_j(x) a_{k,j}(x)|\widetilde \xi_{k,j}(x)|$, $x=\varphi_j(p)$ and the vector fields $\xi_{k,j}$ and $\zeta_{k,j}$ are constructed as in the local step with the map $u\circ\psi_j$ instead of $u$. Observe that the map $v-u$ is compactly supported in $\bar U_j$. Since in this case
$$\|v-u\|_{C^0(\bar \Omega)}\lesssim\frac{1}{\lambda_{k,j}}\|\Gamma_1\xi_{k,j}+\Gamma_2\zeta_{k,j}\|_{C^0(\bar V_j)},$$
we can choose $\lambda_{k,j}$ large enough to get \eqref{globalc0error}.
For the pullback one finds
$$
\begin{aligned}
v^*g_0 & = \varphi_j^*(v\circ\psi_j)^*g_0\\
& = \varphi_j^*\left((u\circ\psi_j)^*g_0+(1-\delta)(\hat\varsigma_j a_{k,j}\cdot\eta_\ell\circ\psi_j)^2 \nu_k^j\otimes\nu_k^j+r_{k,j}\right)\\
& = u^*g_0 + \varphi_j^*\left((1-\delta)(\hat\varsigma_j a_{k,j}\cdot\eta_\ell\circ\psi_j)^2\nu_k^j\otimes\nu_k^j\right) + \varphi_j^*r_{k,j}.
\end{aligned}
$$
In view of the local step and since $\|\varphi_j^*r_{k,j}\|_{C^0(\bar \Omega)}\lesssim \|r_{k,j}\|_{C^0(\bar V_j)}$ by Lemma \ref{localnorm} we can choose $\lambda_{k,j}$ large enough such that
\begin{equation}\label{restestimate}
\varphi_j^*r_{k,j}|_{\overline{\Omega_{\ell/2}^c}}\leqslant \frac{\delta}{2\#\mathcal A m}(g-u^*g)|_{\overline{\Omega_{\ell/2}^c}}
\end{equation}
in the sense of quadratic forms. This proves \eqref{globalmetricerrorc0}. Observe that $\varphi_j^*r_{k,j}|_{\bar\Omega_{\ell/2}}\equiv 0$. Since $\mathrm dv-\mathrm du$ is also compactly supported in $\bar U_j$, one gets
\begin{equation}\label{erroronderivativesglobal}
\begin{aligned}
\|\mathrm dv-\mathrm du\|_{C^0(\bar \Omega)} & \lesssim \|\nabla((v-u)\circ\psi_j)\|_{C^0(\bar V_j)}\\
& \lesssim \|a_{k,j}\hat\varsigma_j\|_{C^0(\bar V_j)} + O(\lambda_{k,j}^{-1})\\
& \lesssim \|g-u_0^*g_0\|_{C^0(\bar \Omega)}^{\sfrac{1}{2}}
\end{aligned}
\end{equation}
as in the local step. The maps $u$ and $v$ agree on $\bar \Omega_{\ell/2}$ by construction and are immersions (similar argument as in the local step).
\end{proof}

\subsection{Stage}
To prove the global stage, we use Proposition \ref{globalstep} $\#\mathcal A m$ times to prove the same estimates as in the local case.

\begin{prop}[Global Stage]
Let $u \in C^\infty(\bar \Omega,\R^{n+1})$ be a short map adapted to $(f,g)$. For any $\varepsilon>0$ there exists a map $\widetilde u\in C^\infty(\bar \Omega,\R^{n+1})$ with the following properties:
\begin{align}
\|u-\widetilde u\|_{C^0(\bar \Omega)} & \leqslant   \varepsilon,\label{cg0}\\
\|g -\widetilde u^*g_0\|_{C^0(\bar \Omega)} & \leqslant   \varepsilon,\label{globalmetricerror}\\
\|\mathrm du - \mathrm d\widetilde u\|_{C^0(\bar \Omega)} & \leqslant  C\|g - u^*g_0\|^{\sfrac{1}{2}}_{C^0(\bar \Omega)}.\label{cg1}
\end{align}
Moreover, $\widetilde u$ is a short map adapted to $(f,g)$ provided $\varepsilon>0$ is small enough.
\end{prop}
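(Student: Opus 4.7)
The plan is to imitate the proof of Proposition \ref{stage} by iterating the Global Step (Proposition \ref{globalstep}) $\#\mathcal{A}\cdot m$ times, one for each pair $(k,j)$, while carefully tracking errors on the manifold using the norms from Definition \ref{norms} and Lemma \ref{localnorm}.

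First I would decompose the metric defect via the partition of unity and the local primitive-metric decompositions:
\[
g-u^*g_0 = \sum_{k,j}\varphi_j^*\left((\hat\varsigma_j a_{k,j})^2\nu_k^j\otimes\nu_k^j\right).
\]
Then I choose $\ell>0$ small so that $\|g-u^*g_0\|_{C^0(\bar\Omega_\ell)}<\varepsilon/2$, and $\delta>0$ small enough that both
\[
\delta\operatorname{id}<(g-u^*g_0)|_{\overline{\Omega_{\ell/2}^c}}
\qquad\text{and}\qquad
\delta<\min\!\left\{\tfrac{\varepsilon}{2}\|g-u^*g_0\|_{C^0(\bar\Omega)}^{-1},\sqrt{\varepsilon}\right\}
\]
hold, exactly as in the local stage.

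Next, I apply Proposition \ref{globalstep} iteratively, setting $u_0\coloneqq u$ and obtaining $u_1,u_2,\ldots,u_{\#\mathcal{A}m}\eqqcolon\widetilde u$, where at each step the pair $(k,j)$ is used once. At the $(k,j)$-th iteration I choose $\lambda_{k,j}$ so large that the residual term satisfies $\varphi_j^*r_{k,j}|_{\overline{\Omega_{\ell/2}^c}}\leqslant \tfrac{\delta^2}{2\#\mathcal{A}m}(g-u^*g_0)|_{\overline{\Omega_{\ell/2}^c}}$ (from \eqref{restestimate}), the $C^0$-increment is at most $\varepsilon/(\#\mathcal{A}m)$ (from \eqref{globalc0error}), and the derivative increment contribution $\lambda_{k,j}^{-1}\|E_{k,j}\|_{C^0}$ is at most $\tfrac{1}{\#\mathcal{A}m}\|g-u^*g_0\|_{C^0(\bar\Omega)}^{1/2}$. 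Summing the $\#\mathcal{A}m$ contributions via the triangle inequality yields \eqref{cg0} and, together with \eqref{globalc1error} (which controls the increment by $a_{k,j}\hat\varsigma_j$ and uses the pointwise bound $a_{k,j}^2\leqslant \|g-u^*g_0\|_{C^0(\bar\Omega)}$ valid from the definition of the primitive decomposition), produces \eqref{cg1}.

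To prove the metric estimate \eqref{globalmetricerror}, I compute by telescoping
\[
g-\widetilde u^*g_0 = \sum_{k,j}\varphi_j^*\!\left(\big(1-(1-\delta)(\eta_\ell\circ\psi_j)^2\big)(\hat\varsigma_j a_{k,j})^2\nu_k^j\otimes\nu_k^j\right) - \sum_{k,j}\varphi_j^*r_{k,j}.
\]
On $\bar\Omega_\ell$ only the $r_{k,j}$ terms survive relative to $g-u^*g_0$ (as the cut-off factor is killed by $\eta_\ell$), so that
$\|g-\widetilde u^*g_0\|_{C^0(\bar\Omega_\ell)}\leqslant \|g-u^*g_0\|_{C^0(\bar\Omega_\ell)}+\tfrac{\delta^2}{2}<\varepsilon$. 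On $\overline{\Omega_\ell^c}$, the cut-offs satisfy $(1-(1-\delta)(\eta_\ell\circ\psi_j)^2)\leqslant\delta$, and combining with \eqref{deltaparametric}-type bounds and \eqref{restestimate} yields $\|g-\widetilde u^*g_0\|_{C^0(\overline{\Omega_\ell^c})}<\varepsilon$.

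Finally, to see that $\widetilde u$ is again a short map adapted to $(f,g)$ for $\varepsilon$ small, I observe that $\widetilde u\equiv u$ on $\bar\Omega_{\ell/2}$ by construction (since each step leaves the map unchanged there), so shortness on that set is inherited. On $\overline{\Omega_{\ell/2}^c}$, combining the telescoping identity with the inductive use of \eqref{restestimate} gives
\[
g-\widetilde u^*g_0 \geqslant \delta(g-u^*g_0) - \sum_{k,j}\varphi_j^*r_{k,j} \geqslant \tfrac{\delta^2}{2}\operatorname{id}>0,
\]
and immersiveness follows from a positivity argument analogous to the local stage, using that $g$ is uniformly positive definite on the compact set $\bar\Omega$.

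The main obstacle I would expect is bookkeeping: one must verify that $\lambda_{k,j}$ can in fact be chosen so large that the $r_{k,j}$ estimate \eqref{restestimate} holds uniformly, even though successive steps change the background map and hence modify the vector fields $\xi$ and $\zeta$ used in subsequent steps. This is handled exactly as in the local case, where the compactness of $\bar\Omega$ and the fact that only finitely many steps are performed ensure that the constants hidden in Lemma \ref{localnorm} remain uniformly bounded throughout the iteration.
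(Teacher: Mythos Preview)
Your proof is correct and follows essentially the same route as the paper: choose $\ell$ and $\delta$, iterate Proposition~\ref{globalstep} over all $(k,j)$, use the telescoping identity for $g-\widetilde u^*g_0$, split $\bar\Omega$ into $\bar\Omega_\ell$ and $\overline{\Omega_\ell^c}$ for the metric estimate, and use \eqref{restestimate} for shortness on $\overline{\Omega_{\ell/2}^c}$. The only cosmetic difference is that you impose the extra local-stage condition $\delta\operatorname{id}<(g-u^*g_0)|_{\overline{\Omega_{\ell/2}^c}}$ and conclude $g-\widetilde u^*g_0\geqslant\tfrac{\delta^2}{2}\operatorname{id}$, whereas the paper exploits that \eqref{restestimate} is already a quadratic-form comparison against $g-u^*g_0$ and obtains $g-\widetilde u^*g_0\geqslant\tfrac{\delta}{2}(g-u^*g_0)>0$ directly, without the auxiliary $\delta\operatorname{id}$ bound; both are valid.
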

\begin{proof}
Choose $\ell>0$ and $\delta>0$ such that
\begin{align}\label{ellglobal}
\|g-u^*g_0\|_{C^0(\bar \Omega_{\ell})} & <\frac{\varepsilon}{2},\\
\label{deltaglobal}
\delta & <\min\left\{\frac{\varepsilon}{2}\|g-u^*g_0\|^{-1}_{C^0(\bar \Omega)},\sqrt{\varepsilon}\right\}.
\end{align}
Using Proposition \ref{globalstep} iteratively we find after $\#\mathcal A m$ steps a new map $\widetilde u\coloneqq u_{\#\mathcal Am}$ such that
$$
\|\widetilde u - u\|_{C^0(\bar \Omega)}\leqslant\varepsilon,
$$
provided the frequencies $\lambda_{k,j}$ in each step are chosen appropriately. Moreover
$$
\begin{aligned}
g-\widetilde u^*g_0 & = g- u^*g_0 - (\widetilde u^*g_0- u^*g_0)\\
& = \sum_{k,j}^{}\left[\varphi_j^*\left((\hat\varsigma_ja_{k,j})^2\nu_k^j\otimes\nu_k^j\right) - \varphi_j^*\left((1-\delta)(\hat\varsigma_ja_{k,j}\cdot \eta_\ell\circ\psi_j)^2\nu_k^j\otimes\nu_k^j\right) - \varphi_j^*r_{k,j}\right]\\
& = \sum_{k,j}^{}\left[\varphi_j^*\left((1-(1-\delta)(\eta_\ell\circ\psi_j)^2)(\hat\varsigma_ja_{k,j})^2\nu_k^j\otimes\nu_k^j\right) - \varphi_j^*r_{k,j}\right].
\end{aligned}
$$
On $\bar \Omega_\ell$ this yields
$$
\begin{aligned}
g-\widetilde u^*g_0 & \leqslant \sum_{k,j}^{}\left[\varphi_j^*\left((\hat\varsigma_ja_{k,j})^2\nu_k^j\otimes\nu_k^j\right) - \varphi_j^*r_{k,j}\right]\\
& \leqslant g-u^*g_0 - \sum_{k,j} \varphi_j^*r_{k,j}
\end{aligned}
$$
and therefore using \eqref{ellglobal} and \eqref{deltaglobal} $
\|g-\widetilde u^*g_0\|_{C^0(\bar \Omega_\ell)}\leqslant \left\|g-u^*g_0\right\|_{C^0(\bar \Omega_\ell)}+\frac{\delta^2}{2}\leqslant \varepsilon$.
On $\overline{\Omega_\ell^c}$ we find
$$
\begin{aligned}
g-\widetilde u^*g_0 & \leqslant \sum_{k,j}^{}\delta\left[\varphi_j^*\left((\hat\varsigma_ja_{k,j})^2\nu_k^j\otimes\nu_k^j\right) - \varphi_j^*r_{k,j}\right]\\
& \leqslant \delta(g-u^*g_0) - \sum_{k,j} \varphi_j^*r_{k,j}
\end{aligned}
$$
and hence $\|g-\widetilde u^*g_0\|_{C^0(\overline{\Omega_\ell^c})}\leqslant \delta \|g-u^*g_0\|_{C^0(\bar \Omega)}+\frac{\delta^2}{2}\leqslant \varepsilon$. This proves \eqref{globalmetricerror}. For the shortness we find on $\overline{\Omega_{\ell/2}^c}$ using \eqref{restestimate}:
$$
\begin{aligned}
g-\widetilde u^*g_0 & \geqslant \sum_{k,j}^{}\delta\left[\varphi_j^*\left((\hat\varsigma_ja_{k,j})^2\nu_k^j\otimes\nu_k^j\right) - \varphi_j^*r_{k,j}\right]\\
& \geqslant \delta(g-u^*g_0) - \sum_{k,j} \varphi_j^*r_{k,j} \geqslant \delta(g-u^*g_0) - \frac{\delta}{2}(g-u^*g_0)\\
& \geqslant \frac{\delta}{2}(g-u^*g_0)>0.\end{aligned}
$$
The proof of \eqref{cg1} then follows from \eqref{erroronderivativesglobal} and an appropriate choice of frequencies $\lambda_{k,j}$ in each step (as in the local case). The rest of the argument is similar to the one in the local case.\end{proof}
The Iteration Theorem \ref{iteration} and its proof can be taken over word by word to the global case.
\subsection{Applications}
%\begin{figure}
%\begin{center}
%\includegraphics[scale=0.2]{potato.png}
%\caption{The image of $\Phi_\varepsilon$ according to the choice $\varepsilon\coloneqq \tfrac{1}{3}$.}
%\end{center}
%\end{figure}

We can now prove Corollary \ref{flexibleextensions}:
\begin{proof}
Whenever $\varepsilon>0$ is small enough, the image of the map
$$\Phi_\varepsilon:[-\tfrac{\pi}{2},\tfrac{\pi}{2}]\times[0,2\pi]  \to\R^3,\quad
(\vartheta,\varphi) \mapsto \left(1-\varepsilon \sin^2\vartheta\right)\begin{pmatrix}\cos\vartheta\cos\varphi \\ \cos\vartheta\sin\varphi\\Ê\sin\vartheta \end{pmatrix}$$
is a 2-dimensional submanifold $S$ of $\R^3$. Since $\Phi_\varepsilon$ is singular at the poles, we parametrize $S$ around the poles by $f_\varepsilon: B_{1/2}(0)\to\R^3$
$$f_\varepsilon(x,y) \coloneqq  (1-\varepsilon(1-x^2-y^2))\left(x,y,\sqrt{1-x^2-y^2}\right)^T.$$
One can check that $S$ is diffeomorphic to $S^2$ ($S^2$ corresponds to the choice $\varepsilon=0$) and that with the induced metric of $\R^3$ we have $g_{S^2}-g_S\geqslant 0$ with equality on the equator only. We can apply now Theorem \ref{iteration} to each of the hemispheres $S^2_{\pm}$ and get sequences of maps $(u^\pm_k)_{k\in \N}$. Observe that for each $k$, the two maps give rise to an embedding $u_k: S^2\to \R^3$ since an open neighborhood of the equator remains unchanged after $k$ steps. The $C^1$-convergence of the sequences implies that also the limit maps
$$v^\pm\coloneqq \lim\limits_{k\to\infty}u^\pm_k$$
give rise to an isometric $C^1$-embedding $v:S^2\to\R^3$ which extends the standard inclusion $S^2\supset S^1\hookrightarrow\R^3$. 
\end{proof}

\begin{example}[Dirichlet Problem]
Let $\Omega=D^2$, $g$ be a Riemannian metric on $\Omega$ and let $f:\partial \Omega\to\R^2\times\{0\}\subset\R^3$ be a smooth isometric embedding. One can consider the Dirichlet Problem for maps $u\in C^1(\bar\Omega,\R^3)$ given by
\begin{equation}\label{dirichlet}\begin{cases}
 \nabla u^T\nabla u  = g & \text{in }\Omega\\
 \hfill u  = f & \text{on }\partial \Omega.
\end{cases}\end{equation}
There is a rigidity theorem for solutions to this problem: In order to state it, recall that $(\bar\Omega,g)$ is called a smooth positive disk, if $g$ has positive Gauss curvature $K_g>0$.
\begin{theorem}[Hong, \cite{hong}]
Let $(\bar\Omega,g)$ be a smooth positive disk with positive geodesic curvature along $\partial\Omega$. Then there is a unique (up to rigid motions) smooth isometric embedding $u:\bar\Omega\to\R^3$ such that $u(\partial\Omega)$ is a planar curve.
\end{theorem}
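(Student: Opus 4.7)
The plan is to reduce the theorem to the classical Weyl problem on $S^{2}$ and Alexandrov's rigidity theorem for closed convex surfaces via a symmetric doubling argument. First I would form the topological double $S=\bar\Omega\sqcup_{\partial\Omega}\bar\Omega$ equipped with the doubled metric $\tilde g$ and the swap involution $\sigma$. Because $g$ has strictly positive Gauss curvature in the interior and $\partial\Omega$ has strictly positive geodesic curvature in $(\bar\Omega,g)$, the metric $\tilde g$ is an Alexandrov metric of positive curvature on $S$: the interior contributes smoothly while the seam contributes a strictly positive $1$-dimensional distributional curvature proportional to $k_g$.

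For existence, I would mollify $\tilde g$ in a $\sigma$-invariant fashion inside a collar of $\partial\Omega$ to obtain a family of smooth metrics $\tilde g_{\varepsilon}$ on $S^{2}$ with $K_{\tilde g_{\varepsilon}}>0$ and $\sigma^{*}\tilde g_{\varepsilon}=\tilde g_{\varepsilon}$ converging to $\tilde g$ in $C^{0}$. The Nirenberg--Pogorelov resolution of the Weyl problem then yields a smooth isometric embedding $F_{\varepsilon}:(S^{2},\tilde g_{\varepsilon})\to\R^{3}$, unique up to rigid motions. Uniqueness combined with the symmetry $\sigma^{*}\tilde g_{\varepsilon}=\tilde g_{\varepsilon}$ forces $F_{\varepsilon}\circ\sigma=R_{\varepsilon}\circ F_{\varepsilon}$ for some isometric involution $R_{\varepsilon}$ of $\R^{3}$; since the fixed set of $R_{\varepsilon}$ contains the non-degenerate closed curve $F_{\varepsilon}(\partial\Omega)$, $R_{\varepsilon}$ must be a reflection across some plane $P_{\varepsilon}$. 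Hence $F_{\varepsilon}(\partial\Omega)\subset P_{\varepsilon}$ is planar and $u_{\varepsilon}\coloneqq F_{\varepsilon}|_{\bar\Omega}$ solves the Dirichlet problem for $\tilde g_{\varepsilon}|_{\bar\Omega}$. Blaschke compactness for uniformly convex surfaces together with Pogorelov-type $C^{1,1}$-estimates for convex surfaces with $K$ bounded below then allow me to pass to a subsequential limit $\varepsilon\to 0$, producing the desired embedding $u$ for the original metric $g$.

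For uniqueness, given two solutions $u_{1},u_{2}:\bar\Omega\to\R^{3}$ with $u_{i}(\partial\Omega)\subset P_{i}$ planar, I would reflect each $u_{i}(\bar\Omega)$ across $P_{i}$ to form a closed surface $\Sigma_{i}\subset\R^{3}$, possibly with a ridge crease along $u_{i}(\partial\Omega)$. Because $K_{g}>0$ in the interior and the hypothesis $k_{g}>0$ makes $u_{i}(\partial\Omega)$ a strictly convex plane curve in $P_{i}$, each $\Sigma_{i}$ bounds a convex body, and both carry the intrinsic metric $\tilde g$ by construction. Alexandrov's rigidity theorem for closed convex surfaces then supplies a rigid motion $T$ of $\R^{3}$ with $\Sigma_{1}=T\Sigma_{2}$; equivariance under the reflections through $P_{1}$ and $P_{2}$ forces $T(P_{2})=P_{1}$, whence $u_{1}=T\circ u_{2}$ on $\bar\Omega$. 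The hard part throughout is the analysis at the seam: one must verify that the ridge crease has non-negative angle deficit so that the doubled surface is genuinely a convex surface to which Alexandrov rigidity applies, and that the mollification limit in the existence step respects this convexity. This is precisely the step where the strict positivity of $k_g$ along $\partial\Omega$ is indispensable.
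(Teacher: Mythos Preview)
The paper does not prove this theorem. Hong's result is quoted inside the Dirichlet Problem example in Section~7 purely as an external input from \cite{hong}, with no argument supplied; the paper then uses it only to manufacture an adapted short map for the convex-integration machinery. There is therefore no ``paper's own proof'' to compare your proposal against.

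That said, your doubling strategy is a reasonable and classical route toward a statement of this type, but it is a sketch with real gaps rather than a proof. Two points in particular would need substantial work. First, smoothness: the theorem asserts a \emph{smooth} isometric embedding, yet your limit $u=\lim u_\varepsilon$ is obtained through Blaschke compactness and Pogorelov-type $C^{1,1}$ bounds, which a priori yield only a $C^{1,1}$ convex cap. You would still have to invoke interior elliptic regularity for the Darboux/Monge--Amp\`ere equation (using $K_g>0$ and $g$ smooth) to bootstrap to $C^\infty$ in $\Omega$, and a separate boundary argument to get smoothness up to $\partial\Omega$; neither step is automatic. Second, the seam analysis in the uniqueness half: to apply Alexandrov rigidity you need the reflected double $\Sigma_i$ to be a genuine closed convex surface, which requires that the dihedral angle along $u_i(\partial\Omega)$ be at most $\pi$ everywhere. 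Knowing that $u_i(\partial\Omega)$ is a convex plane curve is not by itself sufficient; you must control the outward conormal of the cap along the boundary, and this is exactly where the positivity of $k_g$ must be exploited quantitatively, not just invoked. Hong's original argument in \cite{hong} proceeds differently, via a direct PDE analysis of the Darboux equation with the planar boundary condition, which handles both existence and regularity in one stroke.
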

The global variant of Theorem \ref{mainresultc^1} implies that whenever there exists a short map $u_0$ adapted to $(f,g)$, then for any $\varepsilon>0$, there exists a $C^1$-solution $u$ to \eqref{dirichlet} such that $\|u-u_0\|_{C^0(\bar\Omega)}<\varepsilon$. In order to produce adapted short maps, we can use Hong's theorem: Fix a positive disk $(\bar\Omega,g)$ that satisfies the assumptions of Hong's theorem and consider a perturbed metric $\widetilde g$ that is $C^2$-close to $g$ and such that $g-\widetilde g\geqslant 0$ with equality on $\partial\Omega$ only. This can be achieved by setting $\widetilde g = \varphi g$, where $\varphi:\bar\Omega\to(0,1]$ is a smooth function such that $\varphi|_{\partial\Omega}\equiv 1$, $0<\varphi|_{\Omega}<1$ and such that
$\|1-\varphi\|_{C^2(\bar\Omega)}$ is very small. It follows from the estimate 
$$
\|K_g-K_{\widetilde g}\|_{C^0(\bar\Omega)}\leqslant C\|g-\widetilde g\|_{C^2(\bar\Omega)}
$$
that $(\Omega,\widetilde g)$ also satisfies the assumptions from Hong's theorem. The corresponding isometric embedding $\widetilde u:\bar\Omega\to\R^3$ is then a short map adapted to $(f,g)$, where $f=\widetilde u|_{\partial\Omega}$.
\end{example}

\begin{example}[Coin through Paper Hole]
Consider the map $\gamma_a(t)=C_a(\cos t, a\sin t)^T$, where $a>0$ and $C_a$ is chosen, such that
$$
\int_0^{2\pi}|\dot\gamma_a(t)|\,\mathrm dt = 2\pi.
$$
A reparametrization of $\gamma_a$ by arc length delivers an isometric embedding $f_a:S^1\to\R^2\subset\R^3$. Let $\varepsilon>0$ and consider the maps $\alpha: S^1\times [0,\varepsilon]\to \R^2$ and $\beta:S^1\times[0,\varepsilon]\to\R^3$ given by
$$\alpha(r,t) \coloneqq (1+r+r^2)(\cos t, \sin t)^T\text{ and }\beta(r,t)\coloneqq  f_a(t)+re_3,$$
where $\{e_1,e_2,e_3\}$ denotes the standard basis of $\R^3$. These maps have the property that
$$
\alpha^*g_{\R^2}-\beta^*g_{\R^3}=\begin{pmatrix}4r(1+r) & 0 \\ 0 & r (1 + r) (2 + r + r^2)\end{pmatrix}
$$
and hence the map $\beta\circ\alpha^{-1}$ is short map adapted to $(f_a,g_{\mathbb R^2})$. The global variant of Theorem \ref{iteration} delivers an isometric extension of $f_a$ for every $a$ which can be interpreted as follows: A direct computation shows that
$$\lim\limits_{a\to 0}\operatorname{diam}_{\R^2}(f_a(S^1))=\pi,$$
hence one can cut a circle of radius 1 out of a sheet of paper and push an idealized coin of diameter $<\pi$ through the hole when deforming the paper accordingly.
\end{example}

% BibTeX users please use one of
%\bibliographystyle{spbasic}      % basic style, author-year citations
%\bibliographystyle{spmpsci}      % mathematics and physical sciences
%\bibliographystyle{spphys}       % APS-like style for physics
%\bibliography{}   % name your BibTeX data base

% Non-BibTeX users please use

\end{document}